\crefname{equation}{}{}
\crefname{algocf}{Algorithm}{Algorithms}
\crefname{equation}{}{} 
\colorlet{refkey}{orange!20}
\colorlet{labelkey}{blue!30}
\crefname{algocf}{Algorithm}{Algorithms}
\numberwithin{equation}{section}
\newtheorem{theorem}{Theorem}[section]
\newtheorem{proposition}[theorem]{Proposition}
\newtheorem{lemma}[theorem]{Lemma}
\crefname{claim}{Claim}{Claims}
\newtheorem{corollary}[theorem]{Corollary}
\newtheorem{conjecture}[theorem]{Conjecture}
\newtheorem*{question*}{Question}
\theoremstyle{definition}
\newtheorem{definition}[theorem]{Definition}
\newtheorem*{definition*}{Definition}
\theoremstyle{remark}
\newtheorem*{remark}{Remark}
\newcommand{\snorm}[1]{\lVert#1\rVert}
\newcommand{\sang}[1]{\langle #1 \rangle}
\newcommand{\mb}{\mathbb}
\newcommand{\mbm}{\mathbbm}
\newcommand{\mc}{\mathcal}
\newcommand{\ol}{\overline}
\newcommand{\on}{\operatorname}
\newcommand{\wt}{\widetilde}
\title{Sharp invertibility of random Bernoulli matrices}
\author[A1]{Vishesh Jain}
\address{Department of Statistics, Stanford University, Stanford, CA 94305 USA}
\email{visheshj@stanford.edu}
\author[A2]{Ashwin Sah}
\author[A3]{Mehtaab Sawhney}
\address{Department of Mathematics, Massachusetts Institute of Technology, Cambridge, MA 02139, USA}
\email{\{asah,msawhney\}@mit.edu}
\begin{document}

\begin{abstract}
Let $p \in (0,1/2)$ be fixed, and let $B_n(p)$ be an $n\times n$ random matrix with i.i.d.~Bernoulli random variables with mean $p$. We show that for all $t \ge 0$,
\[\mb{P}[s_n(B_n(p)) \le tn^{-1/2}] \le C_p t + 2n(1-p)^{n} + C_p (1-p-\epsilon_p)^{n},\]
where $s_n(B_n(p))$ denotes the least singular value of $B_n(p)$ and $C_p, \epsilon_p > 0$ are constants depending only on $p$. In particular, 
\[\mb{P}[B_{n}(p) \text{ is singular}] = 2n(1-p)^{n} + C_{p}(1-p-\epsilon_p)^{n},\]
which confirms a conjecture of Litvak and Tikhomirov. 

We also confirm a conjecture of Nguyen by showing that if $Q_{n}$ is an $n\times n$ random matrix with independent rows that are uniformly distributed on the central slice of $\{0,1\}^{n}$, then 
\[\mb{P}[Q_{n} \text{ is singular}] = (1/2 + o_n(1))^{n}.\]
This provides, for the first time, a sharp determination of the logarithm of the probability of singularity in any natural model of random discrete matrices with dependent entries.  
\end{abstract}

\maketitle

\section{Introduction}\label{sec:introduction}

The singularity of discrete random matrices is one of the fundamental problems of the non-asymptotic theory of random matrices. Let $B_n(p)$ denote an $n\times n$ random matrix with independent and identically distributed (i.i.d.) $\on{Ber}(p)$ entries (i.e.~each entry is independently $1$ with probability $p$ and $0$ with probability $1-p$). Let $q_n(p) := \mb{P}[B_n(p)\text{ is singular}]$. A classical result of Koml\'os \cite{Kom67} shows that $q_n(1/2) = o_n(1)$. Nearly 30 years later, the bound $q_n(1/2) \le 0.999^{n}$ was obtained by Kahn, Koml\'os, and Szemer\'edi \cite{KKS95}. The base of the exponent was improved to $0.939$ by Tao and Vu \cite{TV06}, $3/4 + o_n(1)$ also by Tao and Vu \cite{TV07}, and $1/\sqrt{2} + o_n(1)$ by Bourgain, Vu, and Wood \cite{BVW10}. In 2018, a breakthrough result of Tikhomirov \cite{Tik20} established that $q_n(p) = (1-p+o_n(1))^{n}$ for fixed $p \in (0,1/2]$.

It is widely believed that the dominant reason for the singularity of discrete random matrices is, in a certain sense, local. A natural extension of a folklore conjecture (cf.~\cite{BVW10, KKS95, LT20, VU14}) in this direction is the following. 
\begin{conjecture}
\label{conjecture:invertibility-bernoulli}
For fixed $p \in (0,1)$,
\[q_n(p) = (1+o_n(1))\mb{P}[B_n(p)\emph{ has two equal rows, two equal columns, a zero row, or a zero column}].\]
\end{conjecture}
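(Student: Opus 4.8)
\emph{Proof idea (for $p\in(0,1/2)$).} The plan is to establish the quantitative least singular value bound from the abstract,
\[
\mb{P}\bigl[s_n(B_n(p))\le tn^{-1/2}\bigr]\;\le\;C_p t+2n(1-p)^n+C_p(1-p-\epsilon_p)^n,
\]
and to deduce \Cref{conjecture:invertibility-bernoulli} from it: letting $t\to0^+$ gives $q_n(p)\le 2n(1-p)^n+C_p(1-p-\epsilon_p)^n$, and since $1-p-\epsilon_p<1-p$ and $\binom n2(1-2p(1-p))^n=o\bigl(n(1-p)^n\bigr)$ for fixed $p<1/2$ (the latter because $1-2p(1-p)<1-p\iff p<1/2$), both $q_n(p)$ and $\mb{P}[B_n(p)\text{ has two equal rows, two equal columns, a zero row, or a zero column}]$ are forced to $(1+o_n(1))\cdot2n(1-p)^n$, the matching lower bound being the trivial observation that each local event forces singularity. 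Writing $B=B_n(p)$ with columns $X_1,\dots,X_n$ and identifying $\{s_n(B)\le tn^{-1/2}\}$ with $\{\exists\,v\in S^{n-1}:\snorm{Bv}_2\le tn^{-1/2}\}$, the first move is to split off the dominant event: a zero row or zero column already forces $s_n(B)=0$, so
\[
\mb{P}\bigl[s_n(B)\le tn^{-1/2}\bigr]\le\mb{P}[B\text{ has a zero row or zero column}]+\mb{P}\bigl[s_n(B)\le tn^{-1/2},\ B\text{ has no zero row or column}\bigr],
\]
the first term being at most $2n(1-p)^n$; since $s_n(B)=s_n(B^{\top})$, every subsequent step may also be run for $B^{\top}$, and the task reduces to bounding the second probability by $C_p t+C_p(1-p-\epsilon_p)^n$.

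On that event I would run a Rudelson--Vershynin dichotomy calibrated to the sharp regime: fix small $p$-dependent $\delta_p,\rho_p>0$ and split $S^{n-1}=\on{Comp}\cup\on{Incomp}$, with $\on{Comp}$ the unit vectors within $\ell^2$-distance $\rho_p$ of a $\delta_p n$-sparse vector. For incompressible near-null vectors the distance estimate $\snorm{Bv}_2\ge|v_i|\cdot\on{dist}(X_i,H_i)$, $H_i=\on{span}(X_k:k\ne i)$, together with a Markov bound over the linearly many $i$ with $|v_i|\gtrsim_p n^{-1/2}$, reduces matters to bounding $\mb{P}\bigl[|\langle X_i,w_i\rangle|\le C_p t\bigr]$ for $w_i$ a unit normal of $H_i$ independent of $X_i$. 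I would then show: (i) conditioned on $B$ having no zero row, $w_i$ lies within $O_p(n^{-1})$ of a $\pm e_j$ only on an exponentially small event whose exponent beats $1-p-\epsilon_p$ --- such a configuration is a row of $B$ supported at coordinate $i$ alone, which I would absorb by deleting that row and column $i$ and inducting on $n$; and (ii) off a further exponentially small event with a sufficiently large exponent, $w_i$ is ``unstructured'', i.e.\ has exponentially large essential least common denominator (equivalently, Tikhomirov's $\mc{L}$-function is large), which I would deduce from sharp invertibility of rectangular Bernoulli matrices. Given (ii), a sharp Bernoulli($p$) inverse Littlewood--Offord estimate yields $\mb{P}_{X_i}\bigl[|\langle X_i,w_i\rangle|\le C_p t\bigr]\le C_p t+C_p(1-p-\epsilon_p)^n$, while the incompressible vectors of small least common denominator, being concentrated near short generalized arithmetic progressions, are handled by decomposing into level sets of $\mc{L}$ and union-bounding over appropriately small nets.

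For compressible near-null vectors three sub-regimes arise. Vectors $v$ at $\ell^2$-scale $\asymp_p n^{-1}$ from some $\pm e_i$: since $\snorm{Bu}_2\lesssim_p\sqrt{\delta_p}\,n$ for the compressible component $u\perp e_i$ (the all-ones spike of $B$ is essentially invisible to sparse $u$, and the mean-zero part contributes $O_p(\sqrt n)$), $\snorm{Bv}_2\le tn^{-1/2}$ forces $X_i$ into an $\ell^2$-ball of radius $<1/2$, hence $X_i=0$ --- exactly the zero-column contribution already peeled off. Vectors $v$ genuinely spread over $2\le k\le\delta_p n$ coordinates: cover them by a net of size $\binom nkC_p^{\,k}$ and use that each coordinate of $Bv$ is a Bernoulli($p$) sum of at least two roughly-flat terms, hence has small-ball probability at most $(1-p)\alpha_p$ with $\alpha_p<1$ (this is where $p<1/2$ enters), so the contribution is at most $\binom{n}{\le \delta_p n}C_p^{\,\delta_p n}\bigl((1-p)\alpha_p\bigr)^n\le C_p(1-p-\epsilon_p)^n$ for $\delta_p$ small. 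The remaining vectors --- moderately concentrated at a single coordinate $i$, $v=\cos\theta\,e_i+\sin\theta\,u$ with $\sin\theta$ between $\asymp_p n^{-1}$ and $\rho_p$ and $u$ compressible --- are the delicate regime: conditioning on $\{X_k:k\ne i\}$ pins $X_i$ to the nearest lattice point $z$ of $-\tan\theta\,Bu$, and one must show the chance $X_i=z$ is exponentially smaller than $(1-p)^n$. When $z$ has $\Omega_p(n)$ ones this is immediate since $\mb{P}[X_i=z]=p^{\snorm{z}_1}(1-p)^{n-\snorm{z}_1}\le(p/(1-p))^{\Omega_p(n)}(1-p)^n$; when $z$ has only $o(n)$ ones (column $i$ is sparse but not tiny) one must instead reduce the dimension by the sparse column and bound singularity of the resulting near-square minor, which amounts to a small-ball estimate for a determinant of the remaining columns.

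The main obstacle is exactly this last regime. Since ``$B$ has a column with $m$ ones'' has probability of order $n^m(1-p)^n$ for bounded $m$ --- not exponentially smaller than the target $n(1-p)^n$ --- one cannot simply forget singularity: one must prove that $B$ is singular with exponentially small conditional probability given that one of its columns is $m$-sparse, and for $m\ge2$ this is itself a sharp-singularity statement (a coincidence between overlapping $(n-1)\times(n-1)$ minors) rather than a clean dimension reduction. Doing this uniformly in $m$, while keeping Tikhomirov's structural analysis of small-$\mc{L}$ incompressible vectors and the rectangular-invertibility inputs quantitatively sharp enough that every error base stays strictly below $1-p$, and then book-keeping the interaction of the many $p$-dependent constants $\delta_p,\rho_p,\epsilon_p$, the $\mc{L}$-thresholds and the net entropies, is where the work concentrates --- and it is precisely the step that pins the constant multiplying $n(1-p)^n$ to $2$.
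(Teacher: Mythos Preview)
The paper does not prove \cref{conjecture:invertibility-bernoulli} in full; it establishes the case $p\in(0,1/2)$ as a consequence of \cref{thm:main}, so the relevant comparison is with that proof.

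Your incompressible step contains a genuine gap. You assert that once the normal vector $w_i$ is unstructured, ``a sharp Bernoulli($p$) inverse Littlewood--Offord estimate yields $\mb{P}_{X_i}[|\langle X_i,w_i\rangle|\le C_p t]\le C_p t+C_p(1-p-\epsilon_p)^n$.'' No such estimate can hold for the \emph{unconditioned} Bernoulli sum: for every $w\in\mb{S}^{n-1}$ the atom at $X_i=0$ already gives $\mb{P}[\langle X_i,w\rangle=0]\ge(1-p)^n$, so the additive error cannot go below $(1-p)^n$. (Peeling off the zero-column event beforehand does not help either, since conditioned on $X_i\ne 0$ the largest atom is still $\sim p(1-p)^{n-1}$.) Fed through the distance-to-hyperplane averaging, this produces an extra contribution of order $n(1-p)^n$ from the incompressible regime alone, destroying the constant~$2$. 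The paper's resolution --- the central idea missing from your sketch --- is to condition each row on lying in the Boolean slice $W_\gamma=\{x\in\{0,1\}^n:\sum_jx_j\in[pn-\gamma n,pn+\gamma n]\}$ and to develop a sharp anticoncentration theory \emph{on that slice} (\cref{thm:inversion-of-randomness}, \cref{prop:structure}); on $W_\gamma$ the additive error drops to $\binom{n}{pn}^{-1}e^{\epsilon n}\le(1-p-\epsilon)^n$, while the $O(1)$ rows outside $W_\gamma$ are absorbed trivially. This slice conditioning is exactly what exploits the exponential gap between $(1-p)^n$ and $\binom{n}{pn}^{-1}$ for $p<1/2$.

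Your structured analysis also diverges. The paper works with \emph{almost-constant} vectors $\on{Cons}(\delta,\rho)$ rather than compressible ones, isolating within them the almost-elementary vectors $\on{Coord}(\delta')$, and it handles precisely your ``main obstacle'' regime without the induction-on-$n$ you envisage. On the event $\{\exists v\in\on{Coord}_1(\delta'):\snorm{Bv}_2\le e^{-\theta'n}\}$ intersected with an operator-norm bound, one shows the first column $B^{(1)}$ must lie within $O(\delta'\sqrt{n})$ of the line $\mb{R}\cdot 1_n$, hence in a deterministic set $\mc{C}\subseteq\{0,1\}^n$ of $\on{Ber}(p)^{\otimes n}$-mass at most $(1-p+\epsilon)^n$. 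The case $B^{(1)}=0\in\mc{C}$ gives exactly the $(1-p)^n$ term; for each fixed nonzero $a\in\mc{C}$, a rectangular Rudelson--Vershynin estimate (\cref{lem:block}) gives $\mb{P}[\exists u':\snorm{B^{(-1)}u'+a}_2\le e^{-\theta'n}]\le 2^{-c'n}$ uniformly in $a$, and a union bound over $\mc{C}\setminus\{0\}$ contributes only $(1-p-\theta)^n$. This sidesteps the conditional-singularity-given-sparse-column difficulty you correctly identified as the bottleneck of your approach.
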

Note that this conjecture falls outside the scope of the aforementioned result of Tikhomirov \cite{Tik20}, since an analysis of the error term $(1+o_n(1))^{n}$ appearing there reveals that it grows superpolynomially in $n$ (cf.~\cite{LT20}). 
Since for $p \in (0,1/2)$, the probability that a row or column is zero is exponentially more likely than the probability that two rows or two columns are equal, the following is a slight generalization of \cref{conjecture:invertibility-bernoulli} in the case when $p \in (0,1/2)$, which is stated in a recent work of Litvak and Tikhomirov \cite{LT20}.   
\begin{conjecture}[cf.~{\cite[Problem~8.2]{LT20}}]
\label{conjecture:invertibility-sparse}
Let $p_n$ be a sequence of real numbers satisfying $0<\liminf p_n \le \limsup p_n < 1/2$. Then
\[q_n(p_n) = (1+o_n(1))\mb{P}[B_n(p)\emph{ has a zero row or column}]= (2+o_n(1))n(1-p_n)^n.\]
\end{conjecture}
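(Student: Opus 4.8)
The plan is to establish the (harder) upper bound $q_n(p)\le 2n(1-p)^n + C_p(1-p-\epsilon_p)^n$, uniformly for $p$ in a fixed compact subinterval $[a,b]\subset(0,1/2)$ containing all but finitely many of the $p_n$; the claimed asymptotic follows, since $(1-p-\epsilon_p)^n = o_n(n(1-p)^n)$, while the lower bound $q_n(p)\ge(2+o_n(1))n(1-p)^n$ is immediate from inclusion--exclusion over the $2n$ events ``row $i$ is zero'' and ``column $j$ is zero'' (each of probability $(1-p)^n$, with pairwise intersections $O((1-p)^{2n-1})$). For the upper bound, let $E$ be the event that $B_n(p)$ has no zero row and no zero column; then $\mb{P}[E^c]\le 2n(1-p)^n$, so it suffices to show that $B_n(p)$ is singular and $E$ occurs with probability at most $C_p(1-p-\epsilon_p)^n$.

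The role of $E$ is structural. If $B_n(p)$ is singular and lies in $E$, then every kernel vector $v$ (with $B_n(p)v = 0$) has at least two nonzero coordinates, since a multiple of a standard basis vector $e_i$ would force column $i$ to vanish. For \emph{any} unit vector $v$ with at least two nonzero coordinates, conditioning on all but two coordinates of $R\sim\on{Ber}(p)^n$ gives at once
\[
\mc{L}_0(v) := \mb{P}_R[\langle R,v\rangle = 0] \;\le\; (1-p)^2 + p^2 \;<\; 1-p ,
\]
with equality at $v = (e_i-e_j)/\sqrt2$. Setting $2\epsilon_p := (1-p)-((1-p)^2+p^2) = p(1-2p)$, it remains to bound $\mb{P}[B_n(p)v = 0 \text{ for some unit } v \text{ with } \ge 2 \text{ nonzero coordinates}]$ --- an event \emph{containing} the one we want but no longer entangled with $E$, so that for each fixed $v$ the rows are independent and $\mb{P}[B_n(p)v = 0] = \mc{L}_0(v)^n$.

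To bound the latter, one passes to integer vectors: the kernel of a singular integer matrix contains a primitive integer vector $\tilde v$ of height at most $n^{n/2}$ (Hadamard's inequality), so the event lies --- up to the far rarer event that $B_n(p)$ has corank at least $2$, which is absorbed into the error --- in $\bigcup_{\tilde v}\{B_n(p)\tilde v = 0\}$ over primitive integer $\tilde v$ of height $\le n^{n/2}$ with at least two nonzero coordinates, for each of which $\mb{P}[B_n(p)\tilde v = 0] = \mc{L}_0(\tilde v)^n \le ((1-p)^2+p^2)^n$. The naive union bound fails (there are $\exp(\Theta(n^2\log n))$ such vectors), so one stratifies by the size of $\mc{L}_0(\tilde v)$ and invokes an inverse-Littlewood--Offord principle: integer vectors with $\mc{L}_0$ above a threshold $s$ carry strong additive structure and are correspondingly few. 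Running this --- in the spirit of Tikhomirov's proof of $q_n(p) = (1-p+o_n(1))^n$ \cite{Tik20}, but now for the class of vectors with at least two nonzero coordinates rather than all nontrivial ones --- should dispatch all the ``spread'' (incompressible) vectors with room to spare, leaving the vectors $\tilde v$ clustered near a single coordinate direction, which is where the real difficulty lies.

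For those near-coordinate (compressible) vectors, a metric-net or randomized-rounding argument must \emph{not} be run at a fixed spatial scale $\rho > 0$: that replaces $\mc{L}_0$ by a smudged L\'evy concentration function, which near $e_i$ equals $1-p$ (even for vectors with a small but nonzero second coordinate, whose true atom probability is only $(1-p)^2$) and would reinstate a contribution of size $\approx n(1-p)^n$ in place of $((1-p)^2+p^2)^n$. One must instead exploit that $B_n(p)v = 0$ holds \emph{exactly} --- genuinely special to the singularity problem $t = 0$; in the general bound $\mb{P}[s_n(B_n(p))\le tn^{-1/2}]\le C_pt + 2n(1-p)^n + C_p(1-p-\epsilon_p)^n$ of the abstract, the corresponding near-coordinate term is the continuous $C_pt$ contribution --- and carry the inverse-Littlewood--Offord count for the (numerous, but overwhelmingly low-$\mc{L}_0$) integer vectors near a coordinate direction with \emph{no} polynomial loss, since one competes against the prefactor $n$ in $n(1-p)^n$. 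I expect this --- together with keeping all constants uniform as $p\to 1/2$, where $\epsilon_p = p(1-2p)/2 \to 0$ and the extremizers $(e_i-e_j)/\sqrt2$ approach the true maximizers of $\mc{L}_0$ --- to be the crux; the reduction steps above are, by contrast, comparatively soft.
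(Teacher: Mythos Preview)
Your proposal correctly identifies the shape of the argument and the key obstruction, but it does not actually close the gap you yourself flag as ``the crux.'' You reduce to a union bound over primitive integer kernel vectors with at least two nonzero coordinates, and then say that for the near-coordinate vectors one must ``carry the inverse-Littlewood--Offord count \ldots\ with no polynomial loss'' and ``exploit that $B_n(p)v=0$ holds exactly'' --- but you do not say how. This is precisely where a direct counting approach over integer kernel vectors breaks down: there are far too many integer directions near $e_i$ (e.g.\ $(N,1,0,\dots,0)$ for all $N\le n^{n/2}$), and stratifying by $\mc{L}_0$ does not obviously beat the polynomial prefactor $n$ you are trying to avoid.

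The paper resolves the near-coordinate case by an entirely different manoeuvre: rather than union-bounding over kernel vectors close to $e_1$, it conditions on the first \emph{column} $B^{(1)}$. If $x=e_1+u$ with $\snorm{u}_2\le 4\delta'$ satisfies $\snorm{B_n(p)x}_2\le\exp(-\theta' n)$, then on the operator-norm event $\snorm{B_n(p)-pJ}\le K\sqrt{n}$ one gets $\snorm{B^{(1)}-\lambda 1_n}_2\le 8K\delta'\sqrt{n}$ for some $\lambda$, forcing $B^{(1)}$ into a deterministic set $\mc{C}\subset\{0,1\}^n$ of $\on{Ber}(p)^{\otimes n}$-mass at most $(1-p+\epsilon)^n$. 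The term $B^{(1)}=0$ contributes exactly $(1-p)^n$; for every \emph{nonzero} $a\in\mc{C}$, a Rudelson--Vershynin estimate (\cref{lem:block}) on the remaining $n\times(n-1)$ block shows $\mb{P}[\exists u':\snorm{B^{(-1)}u'+a}_2\le\exp(-\theta' n)]\le 2^{-c'n}$ with $c'$ independent of $\delta'$. Choosing $\delta'$ small after $c'$ is fixed makes $(1-p+\epsilon)^n\cdot 2^{-c'n}\le(1-p-\theta)^n$. This is the missing idea: trade the intractable union over kernel vectors near $e_i$ for a union over the finitely many possible values of column $i$.

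Separately, your treatment of the ``spread'' vectors by gesturing at Tikhomirov's method is also not a proof. The paper needs a bound of the form $(1-p-\epsilon)^n$ here, not $(1-p+o(1))^n$, and obtains it by passing to anticoncentration on Boolean \emph{slices} near $pn$ (exploiting the gap between $\binom{n}{pn}^{-1}$ and $(1-p)^n$), which requires the new ``inversion of randomness'' machinery of \cref{sec:inversion-of-randomness} and the structure theorem of \cref{sec:structure-theorem}. Your stratify-and-count outline over integer vectors does not supply this.
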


In recent work \cite{LT20}, Litvak and Tikhomirov confirmed the validity of a sparse version of \cref{conjecture:invertibility-sparse}. More precisely, they proved that the stated conclusion holds for all sequences of real numbers $p_n$ satisfying $Cn^{-1}\ln{n}\le p_n \le c$, where $c>0$ is a small universal constant, and $C > 0$ is a large universal constant. We note that in a certain range of sparsity limited to $ p_n \le n^{-1}\ln n + o_n(n^{-1}\ln\ln n)$, an analogous result was obtained in earlier work of Basak and Rudelson \cite{BR18}. Very recently, using an intricate analysis of so-called `steep vectors', Huang \cite{Hua20} was able to bridge the gap between the regimes covered by \cite{BR18} and \cite{LT20}, thereby completing the program of establishing \cref{conjecture:invertibility-sparse} in all sparse regimes of interest. 

In this paper, we establish a framework for addressing sharp questions about the singularity of random discrete matrices. As our first application, we resolve \cref{conjecture:invertibility-sparse}. 

\begin{theorem}\label{thm:main}
Fix $p \in (0,1/2)$. There exist constants $C_{p}, \epsilon_{p}, n_{p} > 0$ such that for all $n \ge n_{p}$ and $t\ge 0$,
\[\mb{P}[s_n(B_n(p))\le t/\sqrt{n}]\le C_{p} t + (2+(1-\epsilon_{p})^n)n(1-p)^n.\]
\end{theorem}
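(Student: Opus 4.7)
My plan is to reduce, via $\mb{P}[s_n(B_n(p)) \le tn^{-1/2}] \le \mb{P}[\exists v \in S^{n-1} : \|B_n(p) v\|_2 \le t]$, to controlling the infimum of $\|B_n(p) v\|_2$ over the unit sphere. Following the Rudelson--Vershynin paradigm, I would partition $S^{n-1}$ into compressible vectors (those within $\ell_2$-distance $\delta$ of some $\rho n$-sparse vector, for small constants $\delta, \rho$ depending on $p$) and incompressible vectors. The $C_p t$ contribution should come from the incompressible part via standard anti-concentration and $\epsilon$-net machinery, while the sharp combinatorial main term must be extracted from the compressible part and, by a symmetric analysis of $B_n(p)^\top$, from the cokernel (yielding the second factor of $n(1-p)^n$ accounting for all-zero rows).

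\textbf{Incompressible vectors.} For incompressible $v$, a Littlewood--Offord-type bound gives $\mb{P}[|\sang{R,v}| \le \eta] \lesssim \eta + n^{-1/2}$ for a single Bernoulli row $R$. Using an $\epsilon$-net on the incompressible sphere stratified by the essential LCD (or an analogue tuned to the Bernoulli distribution), one takes a union bound balancing net cardinality against the sharpened anti-concentration bound at each LCD scale, yielding the $C_p t$ term. Sharpness is not required here because this contribution is only linear in $t$.

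\textbf{Compressible vectors and the sharp constant.} The crux is to prove
\[\mb{P}\bigl[\exists v \in \mr{Comp}: \|B_n(p) v\|_2 \le t\bigr] \le (1+o_n(1))\, n(1-p)^n + C_p (1-p-\epsilon_p)^n.\]
I would stratify $\mr{Comp}$ by essential support size: (i) $v$ within $\ell_2$-distance $\eta$ of some $\pm e_i$, (ii) $v$ close to two-sparse but not one-sparse, and (iii) $v$ with essential support of size $\ge 3$, up to $\rho n$. Stratum (i) recovers the main term: $\|B_n(p) v\|_2$ being small is essentially equivalent to the $i$-th column vanishing, a probability of exactly $(1-p)^n$, and a union bound over $i$ contributes $n(1-p)^n$. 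Stratum (ii) corresponds morally to the ``two equal columns'' event, of total probability $\binom{n}{2}(1 - 2p(1-p))^n$; since $1 - 2p(1-p) < 1-p$ for $p \in (0,1/2)$, this lies in $O((1-p-\epsilon_p)^n)$. Stratum (iii) must be shown to contribute only $C_p(1-p-\epsilon_p)^n$.

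\textbf{Main obstacle.} Controlling stratum (iii) with the correct base in the exponent is the core technical challenge: Tikhomirov's bound $(1-p+o_n(1))^n$ carries a superpolynomial error term, so it cannot simply be quoted. One needs a net at intermediate sparsities with a sharper entropy estimate, paired with a Littlewood--Offord inequality for Bernoulli sums that is sensitive to the full distribution of the coefficients of $v$ rather than only to an anti-concentration modulus, together with an inversion-of-randomness step conditioning on the sparsity pattern of $B_n(p)$ that precisely isolates the zero-column event from all other contributions. Gluing the incompressible and sharpened compressible bounds, and combining them with the analogous bound for $B_n(p)^\top$, then delivers the stated estimate.
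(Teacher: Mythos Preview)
Your proposal has a genuine gap in the incompressible analysis. You assert that the incompressible part yields only $C_p t$ and that ``sharpness is not required here,'' but the Rudelson--Vershynin distance argument always produces $C_p t + (\text{additive error})$, the error coming from the event that the normal vector $v(H_i)$ to the span of the other rows is itself structured (small LCD, or in the paper's language, large conditional threshold). For $t\le 2^{-cn}$ this additive error is the \emph{entire} contribution, and the standard LCD structure theorem only makes it $e^{-c'n}$ for some small $c'>0$ not tied to $-\log(1-p)$. So the sharp exponent is needed precisely here, not only in your stratum~(iii).

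What is missing is the paper's central idea: work with anti-concentration \emph{conditioned on a Boolean slice}. Rather than bounding $\mb{P}[|\sang{R,v}|\le t]$ for a $\on{Ber}(p)$ row $R$, one bounds $\mb{P}[|\sang{R,v}|\le t \mid \sum_i R_i \in [pn-\gamma n, pn+\gamma n]]$; since almost all rows lie on this slice, the conditioning is essentially free. The payoff is that on the slice the relevant anti-concentration scale is $\binom{n}{pn}^{-1}e^{\epsilon n}$, which for $p<1/2$ is exponentially smaller than $(1-p)^n$, so the structure theorem for $v(H_i)$ (Proposition~4.2, driven by the slice analogue of Tikhomirov's inversion-of-randomness in Section~3) delivers an error $(1-p-\epsilon)^n$. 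This also forces the structured class to be the \emph{almost-constant} vectors (most coordinates near a common value $\lambda$) rather than your compressible vectors: the slice kills the $1_n$ direction, so vectors near $1_n/\sqrt{n}$ must be treated as structured too. Your stratification by support size does not capture this, and the ``inversion-of-randomness step'' you allude to at the end belongs on the unstructured side, not on stratum~(iii).
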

\begin{remark}
It is readily seen from the proof that for any $\alpha \in (0,1/4)$, we may choose $\epsilon_p = \epsilon'_\alpha$, $n_p = n'_\alpha$, and $C_p = C'_\alpha$ for all $p \in (\alpha, 1/2 - \alpha)$. This uniformity in the choice of parameters is needed to resolve \cref{conjecture:invertibility-sparse}.
\end{remark}

Here, $s_n(B)$ denotes the least singular value of the $n\times n$ matrix $B$, which may be characterized as $s_n(B) = \inf_{x\in \mb{S}^{n-1}}\snorm{Bx}_{2}$, where $\mb{S}^{n-1}$ denotes the unit sphere in $\mb{R}^{n}$ and $\snorm{\cdot}_{2}$ denotes the standard Euclidean norm on $\mb{R}^{n}$.\\ 

The techniques developed in this paper also lend themselves to the solution of the following problem, which has received attention from several researchers \cite{Ngu13, FJLS19, Jai19, Tra20} in recent years as a test bed for (inverse) Littlewood--Offord type results in dependent settings. Let $Q_{n}$ denote a random matrix with independent rows, each of which is chosen uniformly from among those vectors in $\{0,1\}^{n}$ which have sum exactly $\lfloor n/2 \rfloor$. The study of this model was initiated by Nguyen in \cite{Ngu13} as a natural relaxation of the adjacency matrix of a uniformly random (directed) regular graph. He conjectured the following. 

\begin{conjecture}[{\cite[Conjecture~1.4]{Ngu13}}]
\label{conj:combinatorial}
With notation as above,
\[\mb{P}[Q_n \emph{ is singular}] = \left(\frac{1}{2} + o_n(1)\right)^{n}.\]
\end{conjecture}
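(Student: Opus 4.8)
The plan is to carry the framework developed for Theorem~\ref{thm:main} over to the slice model, the main new feature being that the dominant, base-$1/2$, mechanisms are now purely ``local'' column events rather than a zero row or column. Write $X_1,\dots,X_n$ for the independent rows of $Q_n$, drawn uniformly from $\Sigma_n:=\{x\in\{0,1\}^n:x_1+\cdots+x_n=m\}$ with $m:=\lfloor n/2\rfloor$. The first move is a deterministic reformulation exploiting that every row has $\ell_1$-norm exactly $m$: \emph{$Q_n$ is singular if and only if there is a unit vector $\hat v\perp\mbf 1$ with $\sang{X_k,\hat v}$ the same for every $k\in[n]$}. (Given such a $\hat v$ and common value $a$, the vector $\hat v-(a/m)\mbf 1$ is nonzero and lies in $\ker Q_n$ because $\sang{X_k,\mbf 1}=m$; conversely, writing $v\in\ker Q_n\setminus\{0\}$ as $v=v_\perp+c\mbf 1$ with $v_\perp\perp\mbf 1$ forces $\sang{X_k,v_\perp}=-cm$ to be constant in $k$, and $v_\perp\neq 0$ since $v=c\mbf 1$ with $c\neq0$ would give $0=\sang{X_k,v}=cm\neq0$.) Usefully, this already removes the one direction, $\mbf 1$, along which $\sang{X,\cdot}$ is deterministic for a slice row. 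For $\hat v$ fixed, independence of the rows gives $\mb P[\sang{X_k,\hat v}\text{ equal }\forall k]=\sum_a\mb P[\sang{X,\hat v}=a]^n\le\rho(\hat v)^{n-1}$, where $X$ is one uniform row and $\rho(\hat v):=\sup_a\mb P[\sang{X,\hat v}=a]$. So it suffices to bound, by $(1/2+o_n(1))^n$, the contribution of each piece when the unit sphere of $\mbf 1^\perp$ is split into compressible vectors $\mr{Comp}$ and incompressible vectors $\mr{Incomp}$ (with the thresholds from Theorem~\ref{thm:main}).

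For $\mr{Comp}$ one reuses the ``structured vector'' analysis of Theorem~\ref{thm:main}, transported to the slice. The directions with $\rho(\hat v)\ge 1/2-\epsilon_1$ (for a small absolute $\epsilon_1>0$) form a $\delta$-neighbourhood of a finite family $\mc F$ of size $n^{O(1)}$: up to sign and normalization these are the orthogonal projections onto $\mbf 1^\perp$ of $e_i$, of $e_i-e_j$, and of $e_i+e_j$, whose exceptional values correspond respectively to a constant column, two equal columns, and two complementary columns of $Q_n$. Each $\hat v_0\in\mc F$ satisfies $\rho(\hat v_0)=1/2+O(1/n)$ by a direct binomial-coefficient computation (for instance $\mb P[X_i=X_j]=\big(\binom{n-2}{m}+\binom{n-2}{m-2}\big)/\binom nm$), and since $\sang{X,\hat v_0}$ takes values in a lattice with gap $\Omega(1)$, choosing $\delta$ a small multiple of $n^{-1/2}$ makes the event ``$\sang{X_k,\hat v}$ equal for all $k$ for some $\hat v$ within $\delta$ of $\hat v_0$'' imply ``$\sang{X_k,\hat v_0}$ equal for all $k$''; a union bound over $\mc F$ then contributes $n^{O(1)}(1/2+O(1/n))^{n-1}=(1/2+o_n(1))^n$. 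Every other compressible $\hat v$ has $\rho(\hat v)\le 1/2-\epsilon_1$, and because the compressibility threshold is a small constant the relevant net has size $2^{o_n(n)}$, so this part contributes $2^{o_n(n)}(1/2-\epsilon_1)^{n-1}=o_n((1/2)^n)$.

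For $\mr{Incomp}$ one runs the net-and-stratification machinery of Theorem~\ref{thm:main} essentially verbatim: discretize at an exponentially fine scale, group net points by the dyadic size of $\rho(\hat v)$, and bound each level by $(\text{its cardinality})\times\rho(\hat v)^{n-1}$. The only model-specific input is anticoncentration of a single slice row, which I would obtain by conditioning $Z\sim\mr{Ber}(1/2)^{\otimes n}$ on $\snorm Z_1=m$ — yielding a uniform slice vector — at the cost of only the factor $2^n/\binom nm=O(\sqrt n)$ \emph{per row}, so that $\rho(\hat v)\le O(\sqrt n)\,\sup_a\mb P[\sang{Z,\hat v}=a]$, and then applying the inverse Littlewood--Offord estimate of Theorem~\ref{thm:main} to the $\mr{Ber}(1/2)$ vector. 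Inserting this into the stratified bound makes the incompressible contribution $o_n((1/2)^n)$. The matching lower bound $\mb P[Q_n\text{ singular}]\ge(1/2-o_n(1))^n$ is immediate (e.g.\ from the event that columns $1$ and $2$ of $Q_n$ agree).

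The crux is the incompressible estimate: obtaining a base strictly below $1/2$ there in the presence of the dependence among the entries of a row, which is exactly what is beyond the reach of current methods for i.i.d.\ $\mr{Ber}(1/2)$ matrices. Two features of the slice model make it go through. First, the reformulation confines the search for a null direction to $\mbf 1^\perp$, on which, after conditioning on the $\ell_1$-norm, a single row behaves like an i.i.d.\ $\mr{Ber}(1/2)$ vector at only $\mathrm{poly}(n)$ cost, the sole degenerate direction of a slice row having already been excised. Second — and this is why locality is essential — one cannot afford to condition on the slice constraint for more than a single row at a time ($\mathrm{poly}(n)$ per row but $(\mathrm{poly}\,n)^n$ for all of them), so the argument must proceed one row at a time, which is precisely the discipline already imposed by the proof of Theorem~\ref{thm:main}. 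The remaining work — porting the inverse Littlewood--Offord and net-counting bounds to the slice, and the binomial-coefficient bookkeeping for $\mc F$ — is routine given the machinery in place.
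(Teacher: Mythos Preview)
Your high-level plan is right and matches the paper's: split into almost-constant and non-almost-constant vectors, and for the latter run the Tikhomirov-style counting that underlies \cref{prop:structure}. Two points of your execution differ from what the paper actually does.

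First, your treatment of structured vectors is more refined than needed. The paper (\cref{prop:qn-compressible}, via \cref{lem:qn-left-compressible} and \cref{lem:slice-levy}) settles for $(1/2+\epsilon)^n$ on all almost-constant vectors, without isolating any near-elementary family $\mc{F}$; since the conjecture only asks for $(1/2+o_n(1))^n$, that already suffices. Your stronger assertion that compressible $\hat v$ outside a $\delta$-neighbourhood of $\mc{F}$ have $\rho(\hat v)\le 1/2-\epsilon_1$ is plausible but is not what \cref{lem:slice-levy} gives (it only proves $\le 1/2+\epsilon$), and would require an additional argument. Relatedly, the paper does not use your projection to $\mbf{1}^\perp$; instead it splits almost-constant $x$ according to $|\sang{x,1_n/\sqrt n}|$, using that $\snorm{Q_nx}_2\gtrsim n$ deterministically on the large-overlap side.

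Second, and more importantly, your proposed $O(\sqrt n)$ conversion from a slice row to an i.i.d.\ $\on{Ber}(1/2)$ row, in order to feed the vector into an inverse Littlewood--Offord estimate, is unnecessary and obscures the point. The entire content of \cref{sec:inversion-of-randomness} (\cref{thm:inversion-of-randomness} and \cref{cor:threshold-inversion}) is an inversion-of-randomness estimate \emph{directly for anticoncentration on Boolean slices}: that is exactly what $f_{\mc{A},m,n}$ and $\mc{L}_{p,\gamma}$ encode. This is why \cref{prop:structure} transfers to $Q_n$ essentially verbatim --- in its proof the rows of $H$ enter only through the event $\sum_j H_{ij}\in[pn-\gamma n,pn+\gamma n]$, and every row of $Q_n$ satisfies the exact slice constraint automatically. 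So both the counting step and the tensorization step run on slice anticoncentration without any conversion or $\sqrt n$ loss. Your conversion would still go through (the polynomial loss is swallowed by the super-exponential slack in \cref{cor:threshold-inversion}), but it mislocates where the leverage comes from.
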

Nguyen showed that $\mb{P}[Q_n \text{ is singular}] = O_{C}(n^{-C})$ for any $C > 0$. After intermediate work \cite{FJLS19, Jai19}, an exponential upper bound on the singularity probability was only very recently obtained in work of Tran \cite{Tra20}. Our next result settles \cref{conj:combinatorial}, using completely different techniques compared to \cite{Ngu13, FJLS19, Jai19, Tra20}.

\begin{theorem}
\label{thm:row-regular}
For every $\epsilon>0$, there exists $C_{\epsilon}$ depending on $\epsilon$ such that for all sufficiently large $n$, and for all $t\ge 0$,  
\[\mb{P}[s_n(Q_n)\le t/\sqrt{n}]\le C_\epsilon t + (1/2+\epsilon)^n.\]
\end{theorem}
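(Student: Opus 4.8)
The overall strategy mirrors the proof of Theorem 1.3: decompose the sphere into "compressible/structured" vectors and "incompressible/unstructured" vectors, handle each regime separately, and take a union bound. Concretely, I would write
\[
\mb{P}[s_n(Q_n) \le t/\sqrt n] \le \mb{P}[\exists x \in \mr{Comp} : \snorm{Q_n x}_2 \le t/\sqrt n] + \mb{P}[\exists x \in \mr{Incomp} : \snorm{Q_n x}_2 \le t/\sqrt n],
\]
where $\mr{Comp}$ is the set of unit vectors close (in $\ell_2$) to a sparse vector, and $\mr{Incomp}$ is its complement on $\mb{S}^{n-1}$. The compressible part is typically the "easy" part: because each row of $Q_n$ is a slice vector with $\lfloor n/2\rfloor$ ones, a net argument plus the anticoncentration of a single linear form $\sang{R, x}$ for a fixed incompressible-within-its-support direction $x$ should give a bound like $(1/2+\epsilon/2)^n$ (after absorbing the net cardinality), using that the small-ball probability of one row against a genuinely spread-out vector is at most $1/2 + o(1)$ and there are $n$ independent rows. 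The slice constraint only changes constants here since a uniform slice vector still has entrywise marginals bounded away from $0$ and $1$ and one can condition on all but two coordinates to recover Bernoulli-type anticoncentration.

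The incompressible part is where the real work lies, and here I would invoke the framework built in the body of the paper (the general invertibility-via-structure machinery that yields Theorem 1.3). The key reduction is the standard one: if $Q_n$ has a unit null-ish vector that is incompressible, then by Rudelson--Vershynin-type arguments a typical column is, with probability $\Omega(1)$, within distance $O(t)$ of the span of the others, which forces the existence of a nonzero approximate left-null vector $y$ of $Q_n$ with $\snorm{y}_\infty / \snorm{y}_2 = O(n^{-1/2})$, i.e.\ $y$ is itself incompressible. One then needs the "inverse Littlewood--Offord in the slice" input: the number of incompressible vectors $y$ for which the linear form $\sang{R, y}$ (with $R$ a uniform slice row) has small-ball probability larger than $\rho$ is at most $(C\rho)^n$ times a polynomial correction, uniformly down to $\rho$ of size roughly $(1/2+\epsilon)$. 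Combining this counting bound with the $n$-fold independence of the rows and a union bound over a net of such structured $y$ gives the claimed $(1/2+\epsilon)^n$; the linear-in-$t$ term $C_\epsilon t$ comes from the usual distance/least-singular-value conversion at scale $t/\sqrt n$ in the incompressible regime.

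The main obstacle is the anticoncentration / inverse Littlewood--Offord step for rows drawn from the central slice rather than with i.i.d.\ entries, since the coordinates of $R$ are negatively correlated and their sum is fixed. I would handle this exactly as the corresponding step for $B_n(p)$ is handled in the paper's framework, but with the extra observation that conditioning on the values of $R$ on all but (say) a linear-sized random subset $S$ of coordinates leaves the restriction $R|_S$ distributed as a uniform slice of a smaller cube, whose linear forms one can compare to i.i.d.\ Bernoulli (or Rademacher) forms via a swapping/coupling argument; this transfers the $1/2+o(1)$ small-ball bound and the counting estimates with only a change in the constant $C_\epsilon$ and an affordable polynomial loss. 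The remaining pieces — controlling compressible vectors, the net construction, and the passage from "small $\snorm{Q_n x}_2$" to "approximate null vector" — are by now routine and follow the same template as the proof of Theorem 1.3, so I expect the entire argument for Theorem 1.4 to be comparatively short given the general machinery.
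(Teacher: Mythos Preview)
Your high-level plan is in the right spirit, but there is a genuine gap: you have not dealt with the all-ones direction, and with the sparsity-based compressible/incompressible split you propose it cannot be dealt with.

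Concretely, each row $R$ of $Q_n$ satisfies $\sang{R,1_n/\sqrt{n}} = \lfloor n/2\rfloor/\sqrt{n}$ deterministically, so $\mc{L}(\sang{R,x},\theta)=1$ whenever $x=1_n/\sqrt{n}$, and $\mc{L}(\sang{R,x},\theta)$ stays close to $1$ for $x$ in a neighborhood of $1_n/\sqrt{n}$. Such $x$ are fully spread and hence \emph{incompressible} in the Rudelson--Vershynin sense you are using. This breaks two of your assertions simultaneously: your claim that ``the small-ball probability of one row against a genuinely spread-out vector is at most $1/2+o(1)$'' is false, and your inverse Littlewood--Offord counting claim (``the number of incompressible $y$ with small-ball probability $>\rho$ is at most $(C\rho)^n$'') fails for the same reason---there is an entire neighborhood of spread vectors with small-ball probability essentially $1$. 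The coupling-to-Bernoulli trick you sketch (condition on all but a linear subset of coordinates) does not save this: on vectors close to $1_n/\sqrt{n}$, after conditioning, the remaining sum is nearly deterministic as well.

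The paper avoids this by taking the structured class to be \emph{almost-constant} vectors $\on{Cons}(\delta,\rho)$ rather than compressible ones. The all-ones direction is then absorbed into the structured class, and it is handled not by anticoncentration but by the deterministic observation that if $|\sang{x,1_n/\sqrt{n}}|\ge 1/2$ then $\snorm{Q_nx}_2\ge n/4-O(\sqrt{n})$ on the high-probability event that the operator norm of $Q_n$ restricted to $1_n^{\perp}$ is $O(\sqrt{n})$. Only after removing this direction does the paper prove (Lemma~6.3) that $\mc{L}(\sang{R,x},\theta)\le 1/2+\epsilon$ for the remaining almost-constant $x$, and the unstructured (non-almost-constant) side then feeds directly into the slice structure theorem (Proposition~4.1 with $p=1/2$), whose dichotomy is ``almost-constant vs.\ good slice-threshold''---which matches the chosen split. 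Your sparsity-based split does not match this dichotomy, so even if you patch the all-ones issue ad hoc on the structured side, the reduction on the unstructured side (where the normal vector to $n-1$ rows may well be almost-constant but not sparse) would not close. Also note that the clean reduction is via \emph{rows}, not columns: removing a row leaves the remaining rows independent slice vectors and the fresh row is itself a slice vector, so the conditional threshold function $\mc{T}_{1/2,\gamma}$ is exactly the right anticoncentration quantity; a column-based reduction entangles the row constraints.
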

\begin{remark}
To the best of our knowledge, this is the first instance where $\lim_{n \to \infty}n^{-1}\cdot \log\mb{P}[M_n \text{ is singular}]$ has been determined for any natural model of random discrete matrices $M_n$ with non-trivially dependent entries. 
\end{remark}


\subsection{Overview of techniques}
\label{sub:overview}
Since the seminal works of Litvak, Pajor, Tomczak-Jaegermann, and Rudelson \cite{LPRT05}, Rudelson \cite{Rud08}, and Rudelson and Vershynin \cite{RV08}, upper bounds on the lower tail of the least singular value have been established by decomposing the unit sphere into `structured' and `unstructured' vectors, and analyzing invertibility on the two components separately. A high-level strategy of this nature underpins all recent sharp results on the invertibility of discrete random matrices \cite{BR18, LT20}, and will also be used in our work. However, our definition and treatment of both the `structured' and the `unstructured' components departs significantly from \cite{BR18, LT20}, as we now briefly discuss.

\emph{Structured vectors: }The structured vectors in our work are the so-called almost-constant vectors, which are those vectors on the unit sphere which have a $(1-\delta)$ fraction of their coordinates within $\rho/\sqrt{n}$ of each other, where $\delta, \rho > 0$ are small constants. This class of structured vectors arises naturally in the consideration of the invertibility of random matrices with prescribed row (and/or column) sums (cf.~\cite{JSS20digraphs, LLTTY17, Tra20}); the connection to our setting, where all the entries of the matrix are independent, will be evident from the discussion of unstructured vectors. We remark that in \cite{LT20} a different and much more involved class of structured vectors is considered using different techniques.

Invertibility on almost-constant vectors has been considered previously in \cite{JSS20digraphs, LLTTY17, Tra20}. However, for the purpose of proving the sharp invertibility estimate in \cref{thm:main}, the techniques in these works are insufficient. In \cref{sec:compressible}, we develop a new technique to prove sharp invertibility on the set of almost-constant vectors. Roughly speaking, the only problematic almost-constant vectors are those which are very close to a standard basis vector (indeed, standard basis vectors correspond to the event that the matrix has a zero column). For concreteness, consider vectors which are very close to the first standard basis vector. We show that, if any such vector has exponentially small image under $B_n(p)$, then either the first column of $B_n(p)$ is the zero vector, or it must belong to a universal subset of nonzero vectors of $\{0,1\}^{n}$ of measure (under $\on{Ber}(p)^{\otimes n}$) at most $(1-p+\epsilon)^{n}$. The first case corresponds to the term $(1-p)^{n}$; for the second case, we use the work of Rudelson and Vershynin to show that, on our event, the probability that any vector in this universal subset appears as the first column of $B_n(p)$ is at most $\exp(-4\epsilon n)$, at which point we can conclude using the union bound.

\emph{Unstructured vectors: }The starting point of our treatment of unstructured vectors is the insight of Litvak and Tikhomirov \cite{LT20} to exploit the exponential gap (for $p < 1/2$) between $(1-p)^{n}$ and $\binom{n}{pn}^{-1}$ by considering, for $x \in \mb{S}^{n-1}$ an unstructured vector, the anticoncentration of the random sum $b_1 x_1 + \dots + b_n x_n$, where $(b_1,\dots, b_n)$ is a random $\{0,1\}^{n}$ vector \emph{conditioned to have sum close to} $pn$. However, in order to do this, one needs a rather precise theory of anticoncentration on a slice of the Boolean hypercube, which has traditionally been quite challenging. For instance, despite much interest (cf.~\cite{Cook17b, FJLS19, Jai19, LLTTY17, Ngu13}), exponential singularity bounds for $\{0,1\}$-valued random matrices with constrained row sums \cite{Tra20} (respectively, row and column sums \cite{JSS20digraphs}) were only very recently established.

We note that both the works \cite{JSS20digraphs, Tra20} as well as \cite{LT20} use a proxy for the conditional anticoncentration function resulting from manipulating Esseen's inequality (these are analogues of the essential least common denominator of Rudelson and Vershynin \cite{RV08} for the slice). In the case of \cite{JSS20digraphs, Tra20}, the modification is closer in spirit to \cite{RV08}, but cannot give sharp results, whereas the version in \cite{LT20} is significantly more involved, but is not powerful enough to handle slices of the Boolean hypercube which are not very far from the central slice.

In this work, we completely avoid the use of Esseen's inequality, choosing instead to work directly with the conditional anticoncentration function. Our approach is very much influenced by \cite{Tik20}, where a similar strategy was employed to show that the probability of singularity of $B_n(p)$ is at most $(1-p + o_n(1))^{n}$. Indeed, our key ingredient (\cref{thm:inversion-of-randomness}) shows that the conditional anticoncentration of uniformly random points of certain generic lattices is sufficiently good, and is an analogue of \cite[Theorem~B]{Tik20}, with the dependence of random variables on the slice requiring a necessarily delicate treatment.\\

Finally, for the proof of \cref{thm:row-regular}, since our treatment of unstructured vectors (discussed above) is already for the anti-concentration with respect to a uniformly random element of a Boolean slice, it only remains to analyse structured vectors. Compared to \cref{thm:main}, we need a less precise estimate and hence, no longer need to isolate `elementary' vectors. However, an additional complication arises since the inner product of any row of $Q_n$ with the all-ones vector is deterministically $\lfloor n/2 \rfloor$. This requires decomposing structured vectors into those which have sufficient mass in the all-ones direction and those which do not, and then using a suitable argument for each case.

\subsection{Extensions}
\label{sub:second-paper}
The ideas used to establish \cref{thm:inversion-of-randomness}, as well as our treatment of structured vectors, turn out to be remarkably robust. Indeed, the present article should be viewed as a companion work to \cite{JSS20discrete2}, which proves sharp invertibility results for random matrices whose individual entries are identical copies of a fixed, discrete random variable which is not uniform on its support, as well as logarithmically-optimal invertibility estimates (as in \cite{Tik20}) for all discrete distributions. Although the results of \cite{JSS20discrete2} subsume \cref{thm:main} in this work, we have chosen to isolate the sparse Bernoulli case due to the significant interest in it (cf.~\cite[Problem~8.2]{LT20}, \cite{BR18, Hua20}) and because the simplified setting of Bernoulli random variables allows us to present the basic elements of our framework in a clearer fashion. Additionally, the present work contains \cref{thm:row-regular} as well as complete details of certain arguments which are only sketched in \cite{JSS20discrete2}.

\subsection{Notation}\label{sub:notation}
For a positive integer $N$, $\mb{S}^{N-1}$ denotes the set of unit vectors in $\mb{R}^{N}$, and if $x\in\mb{R}^N$ and $r\ge 0$ then $\mb{B}_2^N(x,r)$ denotes the radius $r$ Euclidean ball in $\mb{R}^{N}$ centered at $x$. $\snorm{\cdot}_2$ denotes the standard Euclidean norm of a vector, and for a matrix $A = (a_{ij})$, $\snorm{A}$ is its spectral norm (i.e., $\ell^{2} \to \ell^{2}$ operator norm).

For nonnegative integers $m\le n$, we let $\{0,1\}^n_m$ be the set of vectors in $\{0,1\}^{n}$ with sum $m$. We will let $[N]$ denote the interval $\{1,\dots, N\}$.  

We will also make use of asymptotic notation. For functions $f,g$, $f = O_\alpha(g)$ (or $f\lesssim_\alpha g$) means that $f \le C_\alpha g$, where $C_\alpha$ is some constant depending on $\alpha$; $f = \Omega_\alpha(g)$ (or $f \gtrsim_\alpha g$) means that $f \ge c_\alpha g$, where $c_\alpha > 0$ is some constant depending on $\alpha$, and $f = \Theta_\alpha(g)$ means that both $f = O_\alpha(g)$ and $f = \Omega_\alpha(g)$ hold. For parameters $\epsilon, \delta$, we write $\epsilon \ll \delta$ to mean that $\epsilon \le c(\delta)$ for a sufficient function $c$. 

For a random variable $\xi$ and a real number $r \ge 0$, we let $\mc{L}(\xi, r):= \sup_{z \in \mb{R}}\mb{P}[|\xi-z|\le r]$. We will use $\ell_1(\mb{Z})$ to denote the set of functions $f\colon \mb{Z} \to \mb{R}$ for which $\sum_{z \in \mb{Z}}|f(z)| < \infty$. 
Finally, we will omit floors and ceilings where they make no essential difference.

\subsection{Acknowledgements}
We thank Mark Rudelson, Konstantin Tikhomirov, and Yufei Zhao for comments on the manuscript. A.S.~and M.S.~were supported by the National Science Foundation Graduate Research Fellowship under Grant No.~1745302. This work was done when V.J.~was participating in a program at the Simons Institute for the Theory of Computing.

\section{Invertibility on almost-constant vectors}\label{sec:compressible}

\subsection{Almost-constant and almost-elementary vectors}\label{sub:almost-vectors}
We recall the usual notion of almost-constant vectors, a modification of compressible vectors (see, e.g., \cite{Tra20}).
\begin{definition}[Almost-constant vectors]
\label{def:almost-constant}
For $\delta, \rho \in (0,1)$, we define 
$\on{Cons}(\delta,\rho)$ to be the set of $x\in\mb{S}^{n-1}$ for which there exists some $\lambda\in\mb{R}$ such that $|x_i-\lambda|\le\rho/\sqrt{n}$ for at least $(1-\delta)n$ values $i\in[n]$.
\end{definition}

The main result of this section is the following. 

\begin{proposition}\label{prop:compressible}
For any $c > 0$, there exist $\delta,\rho,\epsilon, n_0 > 0$ depending only on $c$, such that for all $n \ge n_0$, 
\[\mb{P}[\exists x\in\on{Cons}(\delta,\rho): \snorm{B_n(p)x}_2\le 2^{-cn}]\le n(1-p)^n + (1-p-\epsilon)^n.\]
\end{proposition}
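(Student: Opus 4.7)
The plan is to partition $\on{Cons}(\delta,\rho)$ into an \emph{almost-elementary} piece $\on{Cons}(\delta,\rho) \cap \on{Elem}(\kappa)$, where $\on{Elem}(\kappa) := \{x\in\mb{S}^{n-1} : \snorm{x-\sigma e_i}_2 \le \kappa\text{ for some }i\in[n],\sigma\in\{\pm 1\}\}$, and its spread complement, with parameters $\rho,\delta,\kappa,\epsilon$ chosen sufficiently small in a suitable order of smallness relative to $c$ and $p$. The almost-elementary piece will produce the $n(1-p)^n$ contribution via the zero-column events, while the spread complement produces the $(1-p-\epsilon)^n$ contribution via a standard net-plus-anti-concentration argument.

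For the spread part $\on{Cons}(\delta,\rho) \setminus \on{Elem}(\kappa)$, the condition $\snorm{x}_\infty\le 1-\kappa$ combined with $\snorm{x}_2 = 1$ and the almost-constant structure forces two indices $i_1,i_2$ with $|x_{i_1}|,|x_{i_2}|\ge c_\kappa > 0$ and $|x_{i_1}\pm x_{i_2}|\ge c'_\kappa$: either $|\lambda|\sqrt n$ is bounded below so $x$ has $\Omega(n)$ macroscopic coordinates, or most of the $\ell_2$-mass sits on $J$ and must split between at least two entries since no entry exceeds $1-\kappa$. Conditioning on row entries outside $\{i_1,i_2\}$, the row variable $(B_n(p)x)_k$ becomes an affine function of $(b_{k,i_1},b_{k,i_2})$ with four well-separated atoms of maximum mass at most $1-p-\eta_p$ for some $\eta_p=\eta_p(p)>0$; tensorizing over the $n$ independent rows gives $(1-p-\eta_p)^n$ per fixed $x$. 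An appropriate net for $\on{Cons}(\delta,\rho)$ (discretizing $(J,\lambda,x|_J)$, and absorbing the bounded-$\ell_2$ deviation on $I$ by the high-probability operator-norm estimate $\snorm{B_n(p)}\le C_p\sqrt n$) of size $\exp(o(n))$ for suitably small $\delta$ then bounds this contribution by $(1-p-\epsilon)^n/2$.

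For the almost-elementary piece, the union bound over the $2n$ choices of $(i,\sigma)$ reduces to estimating $\mc{E}_1 := \{\exists x\in \on{Cons}(\delta,\rho)\cap \mb{B}_2^n(e_1,\kappa) : \snorm{B_n(p)x}_2\le 2^{-cn}\}$. Writing $B_n(p) = [C_1\mid B']$ and $x = (x_1,x_{-1})$, any witness has $x_1\ge 1-\kappa^2/2$ and $\snorm{x_{-1}}_2\le \kappa$, and on the high-probability event $\{\snorm{B'}\le C_p\sqrt{n}\}$ the constraint $\snorm{x_1 C_1 + B'x_{-1}}_2\le 2^{-cn}$ yields $\snorm{C_1}_1 = \snorm{C_1}_2^2 \le O_p(\kappa^2 n) =: \kappa_0 n$. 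Hence on $\mc{E}_1$, either $C_1 = 0$ or $C_1\in\mc{B}$, where the \emph{universal} subset
\[\mc{B} := \{v\in\{0,1\}^n : 1\le \snorm{v}_1\le \kappa_0 n\}\]
satisfies $\on{Ber}(p)^{\otimes n}(\mc{B}) \le (1-p+\epsilon)^n$ for $\kappa$ small enough, using that the large-deviation rate $D(\kappa_0\|p)$ tends to $\log(1/(1-p))$ as $\kappa_0 \to 0^+$.

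The main obstacle I anticipate is the conditional bound $\mb{P}[\mc{E}_1\mid C_1=v]\le e^{-4\epsilon n}$ for each $v\in\mc{B}$. Given this,
\[\mb{P}[\mc{E}_1,\, C_1\ne 0] \le \sum_{v\in\mc{B}}\mb{P}[C_1=v]\,\mb{P}[\mc{E}_1\mid C_1=v] \le (1-p+\epsilon)^n e^{-4\epsilon n} \le (1-p-\epsilon')^n,\]
using $(1-p+\epsilon)e^{-4\epsilon}\le 1-p-\Omega_p(\epsilon)$ for small $\epsilon$; combined with the $2n$ union bound and the $(1-p)^n$ contribution from $\{C_1=0\}$, this yields the desired $n(1-p)^n + (1-p-\epsilon)^n$. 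Conditional on $C_1=v\ne 0$, the event $\mc{E}_1$ asserts that $-v$ lies within $2^{-cn+1}$ of the $(n-1)$-dimensional ellipsoid $B'(\mb{B}_2^{n-1}(0,2\kappa))$; decomposing $v$ relative to the column span of $B'$ and its one-dimensional orthogonal complement, the required exponential estimate reduces to Rudelson--Vershynin-style control on (i) the smallest singular value of the rectangular Bernoulli matrix $B'$, and (ii) the non-concentration of the unit null direction of $B'$ against combinatorial $\{0,1\}^n$-vectors of support size at most $\kappa_0 n$. Extracting this \emph{exponential} (rather than the bare polynomial bound furnished by classical Littlewood--Offord estimates) for vectors in the combinatorial set $\mc{B}$ is the delicate technical core of the proof and the main anticipated difficulty.
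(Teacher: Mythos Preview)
Your overall two-part decomposition into almost-elementary vectors and a spread complement, and the plan of isolating the zero-column event from a rare universal set on which a Rudelson--Vershynin estimate applies, is exactly the paper's strategy. However, there is a concrete error in the almost-elementary step: the event $\{\snorm{B'}\le C_p\sqrt{n}\}$ does \emph{not} have high probability, since $B'$ has i.i.d.\ $\on{Ber}(p)$ entries with nonzero mean and hence $\snorm{B'}=\Theta(n)$ (via the all-ones direction). The paper instead uses the centered bound $\snorm{B_n(p)-pJ}\le K\sqrt{n}$, which yields only that $C_1$ is within $O(\kappa\sqrt{n})$ of some constant vector $\lambda 1_n$; the correct universal set is therefore $\mc{C}=\{v\in\{0,1\}^n:\exists\lambda,\ \snorm{v-\lambda 1_n}_2\le 8K\delta'\sqrt{n}\}$, not your $\mc{B}$ of small-support vectors. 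This still satisfies $\on{Ber}(p)^{\otimes n}(\mc{C})\le(1-p+\epsilon)^n$ for small $\delta'$, so the remainder of your outline goes through unchanged once the set is corrected. Separately, on the spread complement your claim that $\snorm{x}_\infty\le 1-\kappa$ forces two macroscopic well-separated coordinates fails: take $x_1=1-\kappa$ and spread the remaining $\ell_2$-mass $\sim 2\kappa$ over arbitrarily many tiny coordinates. The paper covers this regime with a second case (one large coordinate plus $\ell_2$-mass $\ge\delta/2$ in small coordinates) handled by the Kolmogorov--L\'evy--Rogozin inequality rather than the two-atom argument.

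Finally, the part you flag as the ``delicate technical core'' is in fact less delicate than you suggest. The paper's conditional bound $\mb{P}[\exists u':\snorm{B^{(-1)}u'+a}_2\le 2^{-cn}]\le 2^{-c'n}$ uses nothing about $a$ beyond $\snorm{a}_2\ge 1$: one passes to an $(n-1)\times(n-1)$ submatrix $B$, controls $s_{n-1}(B)$ exponentially via Rudelson--Vershynin, and then uses that a random row $R$ independent of $B$ has exponentially small L\'evy concentration against the (random but $R$-independent) unit vector $B^{-1}v'/\snorm{B^{-1}v'}_2$. No special combinatorial structure of $a\in\mc{C}$ enters, so your concern about extracting exponential anticoncentration specifically against sparse $\{0,1\}^n$-vectors is unnecessary.
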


For later use, we record the following simple property of non-almost-constant vectors. 

\begin{lemma}
\label{lem:nonconstant-admissible}
For $\delta,\rho \in (0,1/4)$, there exist $\nu,\nu' > 0$ depending only on $\delta, \rho$, and a finite set $\mc{K}$ of \emph{positive} real numbers, also depending only only on $\delta, \rho$, such that if $x\in\mb{S}^{n-1}\setminus\on{Cons}(\delta,\rho)$, then at least one of the following two conclusions is satisfied.
\begin{enumerate}
\item There exist $\kappa, \kappa' \in \mc{K}$ such that 
\[|x_i| \le \frac{\kappa}{\sqrt{n}} \text{ for at least } \nu n \text{ indices }i\in [n], \text{ and }\]
\[\frac{\kappa + \nu'}{\sqrt{n}} < |x_i| \le \frac{\kappa'}{\sqrt{n}} \text{ for at least } \nu n \text{ indices }i\in [n].\]
\item There exist $\kappa, \kappa' \in \mc{K}$ such that
\[\frac{\kappa}{\sqrt{n}} < x_i < \frac{\kappa'}{\sqrt{n}} \text{ for at least } \nu n \text{ indices }i\in [n], \text{ and }\]
\[-\frac{\kappa'}{\sqrt{n}} < x_i < -\frac{\kappa}{\sqrt{n}} \text{ for at least } \nu n \text{ indices }i\in [n].\]
\end{enumerate}
\end{lemma}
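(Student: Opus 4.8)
The plan is to argue by a finite case analysis on the empirical distribution of the coordinates of a non-almost-constant vector $x \in \mb{S}^{n-1} \setminus \on{Cons}(\delta,\rho)$, using a suitably fine grid of thresholds. First I would fix a large constant $K = K(\delta,\rho)$ and a fine mesh, and let $\mc{K}$ be a finite set of positive reals of the form $j\eta$ for $j$ ranging over a bounded set and $\eta = \eta(\delta,\rho)$ small; the auxiliary gap parameter $\nu'$ will be taken comparable to $\eta$. The point of the threshold $K/\sqrt n$ at the top end is that, since $\snorm{x}_2 = 1$, at most $1/K^2$ fraction of coordinates can have $|x_i| > K/\sqrt n$, so (taking $K$ large in terms of $\delta$) all but a $\delta/10$ fraction of the coordinates lie in the window $[-K/\sqrt n, K/\sqrt n]$, and we only ever need to ``resolve'' coordinates inside this window using finitely many grid points.

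The main step is a pigeonhole/Markov argument. Partition $[-K/\sqrt n, K/\sqrt n]$ into $O(K/\eta)$ consecutive intervals of length $\eta/\sqrt n$. If $x \notin \on{Cons}(\delta,\rho)$, then no single interval of length $2\rho/\sqrt n$ (hence no union of $O(\rho/\eta)$ consecutive grid cells) contains $(1-\delta)n$ coordinates of $x$; consequently there must be substantial coordinate mass genuinely ``spread out'' at scale $1/\sqrt n$. I would first dispose of the mass outside $[-K/\sqrt n,K/\sqrt n]$ (negligibly small by the norm bound) and then show the following dichotomy: either (a) at least a $\nu := c(\delta,\rho)$ fraction of coordinates have $|x_i|$ small, say $\le \kappa_0/\sqrt n$ for the smallest element $\kappa_0$ of a candidate list, AND a $\nu$ fraction have $|x_i|$ bounded away from that small value and bounded above — this is conclusion (1); or (b) there are $\ge \nu n$ positive coordinates bounded in a window $(\kappa/\sqrt n, \kappa'/\sqrt n)$ and $\ge \nu n$ negative coordinates in the mirror window — this is conclusion (2). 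To produce these, I would look at the histogram of $\{\sqrt n\, x_i : i \in [n]\}$ restricted to $[-K,K]$; if it is concentrated near a single value $\lambda$ we contradict $x \notin \on{Cons}$ (after checking $|\lambda|$ is not too large, again by the norm constraint, so that the almost-constant witness is legitimate), so the histogram must have mass of size $\ge \nu n$ in two ``separated'' bins. If the two separated bins are on the same side of $0$ (or one of them straddles/contains $0$), I land in case (1) after relabelling: the bin closer to $0$ gives the ``$|x_i| \le \kappa/\sqrt n$'' clause and the farther bin gives the ``$\kappa+\nu' < \sqrt n |x_i| \le \kappa'$'' clause, with $\nu'$ absorbing the mesh width. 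If the two separated bins lie on opposite sides of $0$ with both bounded away from $0$, I land in case (2). Rounding the bin endpoints outward/inward to nearby grid points of $\mc{K}$ (and shrinking $\nu$ by a constant factor for each rounding step) makes all the $\kappa,\kappa'$ come from the fixed finite set $\mc{K}$, independent of $n$.

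The main obstacle I anticipate is bookkeeping: ensuring simultaneously that (i) the thresholds $\kappa,\kappa'$ are drawn from a single finite set $\mc{K}$ depending only on $\delta,\rho$ and not on $x$ or $n$, (ii) the strict inequalities with the extra $\nu'$-gap in conclusion (1) survive the rounding of histogram bins to grid points, and (iii) the ``almost-constant witness'' $\lambda$ produced in the contradiction step genuinely satisfies the definition, i.e.\ one must use $\snorm{x}_2 = 1$ together with $\delta,\rho < 1/4$ to rule out the degenerate possibility that the bulk of the coordinates clusters at a value $\lambda$ so large that $(1-\delta)n$ coordinates of size $\approx \lambda/\sqrt n$ would force $\snorm{x}_2 > 1$. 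Handling (iii) is where the hypothesis $\delta,\rho \in (0,1/4)$ (rather than merely $(0,1)$) is used: with $1-\delta > 3/4$ the clustered mass is large enough that a too-large common value is impossible, so the $\on{Cons}(\delta,\rho)$ witness is always admissible and the contradiction is genuine. Everything else is a finite, $n$-independent optimization of the mesh parameters $\eta,\nu,\nu',K$ against $\delta,\rho$, which I would not carry out explicitly.
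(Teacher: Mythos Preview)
Your proposal is correct and follows essentially the same strategy as the paper: truncate to the window $[-K/\sqrt n, K/\sqrt n]$ using $\snorm{x}_2=1$, partition into $O_{\delta,\rho}(1)$ cells, and use $x\notin\on{Cons}(\delta,\rho)$ to locate two well-separated cells each carrying a $c(\delta,\rho)$-fraction of the coordinates, then classify by signs. The paper organizes the case analysis slightly differently (three cases based on how many coordinates sit near $0$, with a ``first cumulative threshold'' trick in the all-one-sign case), whereas you do two rounds of pigeonhole, but these are interchangeable implementations of the same idea.

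One small correction: your anticipated obstacle (iii) is a red herring. The definition of $\on{Cons}(\delta,\rho)$ places no constraint on $\lambda$, so if $(1-\delta)n$ coordinates cluster near some $\lambda$ then $x\in\on{Cons}(\delta,\rho)$ automatically, regardless of the size of $\lambda$; you never need to argue that such a $\lambda$ is ``admissible,'' and the hypothesis $\delta,\rho<1/4$ is not used in the way you suggest (in the paper it is only a convenient normalization so that quantities like $1-\delta/4$ stay comfortably positive).
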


\begin{proof}
Let $I_0 := \{i \in [n]: |x_i| \le 4/\sqrt{\delta n} \}$. Since $\snorm{x}_2 =1$, it follows that $|I_0| \ge (1-\delta/16)n$. We consider the following cases. 

\textbf{Case I: }$|\{ i\in I_0: |x_i| < \rho/(10\sqrt{n})\}| \ge \delta n/16$. Since $x \notin \on{Cons}(\delta, \rho)$, there are at least $\delta n$ indices $j \in J$ such that $|x_j| \ge \rho/\sqrt{n}$. Moreover, at least $\delta n/8$ of these indices satisfy $|x_j| \le 4/\sqrt{\delta n}$. Hence, in this case, the first conclusion is satisfied for suitable choice of parameters. 

\textbf{Case II: }$|\{i \in I_0: |x_i| < \rho/(10\sqrt{n})\}| < \delta n/16$ and $|\{i \in I_0: x_i \le -\rho/(10\sqrt{n})\}| \ge \delta n/16$ and $|\{i \in I_0: x_i \ge \rho/(10\sqrt{n})\}| \ge \delta n/16$. In this case, the second conclusion is clearly satisfied for suitable choice of parameters. 

\textbf{Case III: } Either $|\{i \in I_0: x_i \ge \rho/(10\sqrt{n})\}| \ge (1-\delta/4)n$ or $|\{i \in I_0: x_i \le  -\rho/(10\sqrt{n})\}| \ge (1-\delta/4)n$. We assume that we are in the first sub-case; the argument for the second sub-case is similar. We decompose   
\begin{align*}
    [0, 4/\sqrt{\delta n}] = \cup_{\ell=1}^{L} J_\ell,
\end{align*}
where $J_\ell := [(\ell-1)\cdot \rho/(10\sqrt{n}), \ell \cdot \rho/(10\sqrt{n}))$, and $L = O_{\delta, \rho}(1)$. Let 
\begin{align*}
    \ell_0 := \min\{\ell \in L: |\{i \in [n]: x_i \in J_1\cup \dots \cup J_\ell\}| \ge \delta n/16\}.
\end{align*}
Since $x\notin \on{Cons}(\delta, \rho)$, there are at least $\delta n$ indices $j \in [n]$ such that $|x_j - \ell_0\cdot \rho/(10\sqrt{n})| > \rho/\sqrt{n}$. On the other hand, by the assumption of this case and the definition of $\ell_0$, there exist at least $\delta n - (\delta n/4) - (\delta n/16) - (\delta n/16) > \delta n/2$ indices $j \in I_0$ for which $x_j > (\ell_0 + 2)\cdot \rho/(10\sqrt{n})$. Thus, the first conclusion is satisfied for suitable choice of parameters.
\end{proof}

We also isolate the following set of almost-elementary vectors.  
\begin{definition}[Almost-elementary vectors]
For $\delta > 0$ and $i \in [n]$, let 
\[\on{Coord}_i(\delta) := \mb{S}^{n-1}\cap\mb{B}_2^n(e_i,\delta) = \{x\in\mb{S}^{n-1}: \snorm{x-e_i}_2\le\delta\}.\]
We define the set of $\delta$-almost-elementary vectors by
\[\on{Coord}(\delta) := \bigcup_{i=1}^n\on{Coord}_i(\delta).\]
\end{definition}

To prove \cref{prop:compressible}, we will handle almost-elementary vectors and non-almost-elementary vectors separately. We begin with the easier case of non-almost-elementary vectors. 

\subsection{Invertibility on non-almost-elementary vectors}\label{sub:non-almost-elementary}
We begin with a by now standard anticoncentration result due to Kolmogorov-L\'evy-Rogozin, which will also be important later.
\begin{lemma}[\cite{Rog61}]
\label{lem:LKR}
Let $\xi_1,\dots, \xi_n$ be independent random variables. Then, for any real numbers $r_1,\dots,r_n > 0$ and any real $r \ge \max_{i \in [n]}r_i$, we have
\begin{align*}
    \mc{L}\bigg(\sum_{i=1}^{n}\xi_i, r\bigg) \le \frac{C_{\ref{lem:LKR}}r}{\sqrt{\sum_{i=1}^{n}(1-\mc{L}(\xi_i, r_i))r_i^{2}}},
\end{align*}
where $C_{\ref{lem:LKR}}>0$ is an absolute constant. 
\end{lemma}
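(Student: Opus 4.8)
The statement is the classical Kolmogorov--L\'evy--Rogozin inequality, and my plan is to prove it by the standard Fourier-analytic route. Throughout write $\phi_\xi(t):=\mb{E}\,e^{it\xi}$, $S_n:=\sum_{i=1}^n\xi_i$, and $S:=\sum_{i=1}^n(1-\mc{L}(\xi_i,r_i))r_i^2$; one may assume $S>0$, since otherwise every $\xi_i$ is almost surely constant and the asserted bound is vacuous. \emph{Step 1 (Esseen's inequality).} First I would record that for every real random variable $X$ and every $\rho>0$ one has $\mc{L}(X,\rho)\le C'\rho\int_{-1/\rho}^{1/\rho}|\phi_X(t)|\,dt$ for an absolute constant $C'$; this follows by fixing a nonnegative kernel $w$ with $w\ge c_0>0$ on $[-1,1]$ whose Fourier transform is supported in $[-1,1]$ (a Fej\'er-type kernel), observing $\mb{P}[|X-z|\le\rho]\le c_0^{-1}\mb{E}\,w((X-z)/\rho)$, and expanding $w$ by Fourier inversion. \emph{Step 2 (Symmetrization).} Let $\xi_i'$ be an independent copy of $\xi_i$ and $\wt\xi_i:=\xi_i-\xi_i'$, so that $\phi_{\wt\xi_i}(t)=|\phi_{\xi_i}(t)|^2\in[0,1]$ and hence, using $\sqrt{1-u}\le e^{-u/2}$, $|\phi_{\xi_i}(t)|\le\exp(-\tfrac12 g_i(t))$ with $g_i(t):=\mb{E}[1-\cos(t\wt\xi_i)]\ge 0$. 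Since $\phi_{S_n}=\prod_i\phi_{\xi_i}$, combining with Step 1 reduces the whole lemma to showing
\[\int_{0}^{1/r}\exp\Big(-\tfrac12\sum_{i=1}^n g_i(t)\Big)\,dt\ \lesssim\ \frac{1}{\sqrt S}.\]

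\emph{Step 3 (An integrated lower bound).} Conditioning on $\xi_i'$ gives the elementary fact $\mb{P}[|\wt\xi_i|\ge r_i]\ge 1-\mc{L}(\xi_i,r_i)$. Feeding this into the inequality $1-\tfrac{\sin u}{u}\ge c_1\min(u^2,1)$ and using that on the event $\{|\wt\xi_i|\ge r_i\}$ one has $\min(t^2\wt\xi_i^2,1)\ge t^2r_i^2$ whenever $t\le 1/r_i$, I obtain for every $0\le t\le 1/r$ (this is exactly where the hypothesis $r\ge\max_i r_i$ is used, so that $t\le 1/r_i$ for all $i$ simultaneously)
\[\int_0^t g_i(s)\,ds\ =\ t\,\mb{E}\Big[1-\tfrac{\sin(t\wt\xi_i)}{t\wt\xi_i}\Big]\ \ge\ c_1 t^3 r_i^2\big(1-\mc{L}(\xi_i,r_i)\big),\qquad\text{hence}\qquad \int_0^t\Big(\sum_{i=1}^n g_i(s)\Big)ds\ \ge\ c_1 t^3 S.\]

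\emph{Step 4 (Closing the argument -- the main obstacle).} One would like to finish by inserting a pointwise bound $\sum_i g_i(t)\gtrsim t^2 S$ into the integral of Step 2 and comparing with a Gaussian, but that pointwise bound is \emph{false}: each $\phi_{\wt\xi_i}$, and therefore $\sum_i g_i$, can return arbitrarily close to its maximum at isolated frequencies, so only the averaged estimate of Step 3 is at hand. The genuine task is instead a sublevel-set estimate
\[\big|\{t\in[0,1/r]:\ \textstyle\sum_{i=1}^n g_i(t)<\lambda\}\big|\ \lesssim\ \sqrt{\lambda/S}\qquad(\lambda>0),\]
from which $\int_0^{1/r}\exp(-\tfrac12\sum_i g_i)\,dt\lesssim 1/\sqrt S$ follows by integrating in layers against $e^{-\lambda/2}$. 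To prove this sublevel-set bound I would exploit the doubling property $g_i(2t)\le 4g_i(t)$ for the symmetric variables $\wt\xi_i$ (immediate from $\sin^2\theta\le 2(1-\cos\theta)$, which yields $\sum_i g_i(2t)\le 4\sum_i g_i(t)$): smallness of $\sum_i g_i$ propagates only toward larger scales, so that, combined with the scale-$t$ lower bound $\int_0^t\sum_i g_i\gtrsim t^3 S$ of Step 3, $\sum_i g_i$ cannot be small on any dyadic block of frequencies much below scale $S^{-1/2}$, while on the finitely many dyadic blocks at scales $\gtrsim S^{-1/2}$ the sublevel sets are thin neighbourhoods of the quadratically approached zeros of $\sum_i g_i$; summing over all dyadic scales gives a geometric series dominated by the scale $S^{-1/2}$, which is $O(1/\sqrt S)$. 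Carrying out this dyadic book-keeping with the correct dependence on the absolute constants (and tracking constants through Steps 1--3) is the one technically delicate part of the proof; everything else is routine.
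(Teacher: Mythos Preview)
The paper does not prove this lemma; it is a classical result simply cited from Rogozin~[Rog61]. Your plan follows the Fourier-analytic approach due to Esseen, and Steps~1--3 are standard and correct.

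Step~4, however, is where the real content lies, and the argument you outline has a gap. The doubling inequality $G(2t)\le 4G(t)$ (which is indeed valid, via $1-\cos 2u=4\cos^2(u/2)(1-\cos u)$) only constrains $G$ at dyadic dilates of a fixed point, not pointwise on intervals. Combined with the integrated lower bound $\int_0^t G\gtrsim t^3 S$, it does \emph{not} preclude $G$ from dipping below $\lambda$ at many scattered points $t\gg S^{-1/2}$: the integral bound controls only the \emph{average} of $G$ on $[0,t]$, and nothing stops $G$ from being large on $[0,S^{-1/2}]$ (supplying the entire integral) while having small local minima thereafter---such behaviour is consistent with doubling, which only gives \emph{upper} bounds at larger scales from smaller ones. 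Your phrase ``thin neighbourhoods of the quadratically approached zeros'' suggests a local Taylor expansion, but $G=\sum_j g_j$ need have neither isolated zeros nor quadratic growth near its minima in general.

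The sublevel-set bound you state is the correct intermediate goal, and it is essentially what Esseen proves, but his argument (see Esseen, \emph{Z.~Wahrsch.}~\textbf{5} (1966), 210--216, or Petrov, \emph{Sums of Independent Random Variables}, Ch.~III) is more delicate than dyadic book-keeping on the two ingredients you list. You have correctly located the obstacle, but the specific resolution you propose is not one that is known to work; as written, the proposal is a plan with the decisive step still missing.
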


Using this, we can prove the following elementary fact about sums of independent $\on{Ber}(p)$ random variables.

\begin{lemma}\label{lem:p-characterization}
Fix $p, \delta\in (0,1/2)$. There exists $\theta = \theta(\delta,p) > 0$ such that for all $x\in\mb{S}^{n-1} \setminus \on{Coord}(\delta)$,
\[\mc{L}(b_1x_1+\cdots+b_nx_n,\theta)\le 1-p-\theta,\]
where $b = (b_1,\dots,b_n)$ is a random vector whose coordinates are independent $\on{Ber}(p)$ random variables.
\end{lemma}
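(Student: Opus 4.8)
The plan is to combine the Kolmogorov–Lévy–Rogozin inequality (\cref{lem:LKR}) with the fact that if $x \notin \on{Coord}(\delta)$ then $x$ has at least two coordinates that are individually bounded below in absolute value by $c(\delta)/\sqrt n$. First I would observe that since $\snorm{x}_2 = 1$, at most one coordinate of $x$ can have $|x_i| > 1 - \delta^2/4$; call this coordinate $i_1$ if it exists. If no such coordinate exists, set $i_1$ to be an index achieving $\max_i |x_i|$ (which is then $\le 1-\delta^2/4$ but also $\ge 1/\sqrt n$). In either case, since $x \notin \on{Coord}_{i_1}(\delta)$, we have $\snorm{x - (\operatorname{sgn} x_{i_1})e_{i_1}}_2 > \delta$, and a short computation (using $|x_{i_1}|\le 1-\delta^2/4$ in the first case, and directly in the second) shows that $\sum_{i \ne i_1} x_i^2 \ge \delta^2/4$, hence there is a second index $i_2 \ne i_1$ with $|x_{i_2}| \ge \frac{\delta}{2\sqrt n}$; and of course $|x_{i_1}| \ge \frac{1}{\sqrt n} \ge \frac{\delta}{2\sqrt n}$.

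Next I would apply \cref{lem:LKR} to the two summands $\xi_{i_1} = b_{i_1} x_{i_1}$ and $\xi_{i_2} = b_{i_2} x_{i_2}$ only (the remaining summands can only decrease the Lévy concentration function, so dropping them is legitimate — formally, condition on all other $b_i$ and use that $\mc{L}$ of a sum is at most $\mc{L}$ of a shift of the two-variable sum). Take $r_{i_1} = r_{i_2} = r := \frac{\delta}{4\sqrt n}$, so that $r \le \frac{|x_{i_j}|}{2}$ for $j = 1,2$. Then $\mc{L}(b_{i_j} x_{i_j}, r) = \max(p, 1-p) = 1-p$, since the two atoms $0$ and $x_{i_j}$ of $b_{i_j}x_{i_j}$ are separated by $|x_{i_j}| > 2r$, so a window of radius $r$ captures exactly one of them, and the heavier atom has mass $1-p$ (as $p < 1/2$). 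Hence $1 - \mc{L}(b_{i_j}x_{i_j}, r) = p$ for $j=1,2$, and \cref{lem:LKR} gives, for $\theta := r = \frac{\delta}{4\sqrt n}$ (which is $\ge \max(r_{i_1}, r_{i_2})$),
\[
\mc{L}(b_1 x_1 + \cdots + b_n x_n, \theta) \;\le\; \frac{C_{\ref{lem:LKR}} \, r}{\sqrt{2 p \, r^2}} \;=\; \frac{C_{\ref{lem:LKR}}}{\sqrt{2p}}.
\]
This constant bound is the crude form; the one subtlety is that $\frac{C_{\ref{lem:LKR}}}{\sqrt{2p}}$ need not be below $1-p-\theta$. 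To fix this I would simply use more coordinates: either $x$ has $\Omega_\delta(n)$ coordinates of size $\Omega_\delta(1/\sqrt n)$ — in which case feeding $m = m(\delta, p)$ of them into \cref{lem:LKR} yields $\mc{L} \le C_{\ref{lem:LKR}}/\sqrt{mp} < 1-p-\theta$ once $m$ is a large enough constant — or else $x$ is ``spiky'', concentrated on $o_\delta(n)$ coordinates; but then, being a unit vector, it has one dominant coordinate and (by the $\notin \on{Coord}(\delta)$ hypothesis) a second coordinate of size $\ge c(\delta)/\sqrt n$ as extracted above. Actually, I would streamline this: run \cref{lem:LKR} on a set of $k$ indices on which $|x_i| \ge c(\delta, k)/\sqrt n$, where $k$ is chosen so that $C_{\ref{lem:LKR}}/\sqrt{kp} \le 1 - p - 1$-ish; such a set of size $k = k(\delta, p)$ always exists because $\snorm{x}_2 = 1$ forces many coordinates to be at least $\Omega_k(1/\sqrt n)$, \emph{unless} few coordinates carry almost all the mass, the degenerate case already handled.

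The only real obstacle is the bookkeeping in the last step: making the constant from \cref{lem:LKR} genuinely smaller than $1 - p - \theta$, uniformly over all $x \notin \on{Coord}(\delta)$, by choosing the number of ``large'' coordinates as a function of $\delta$ and $p$, and checking that the $\on{Coord}(\delta)$ exclusion really does guarantee enough large coordinates (via the two-coordinate extraction when $x$ is spiky, and a counting argument when $x$ is spread out). Everything else is a direct substitution into \cref{lem:LKR}, and $\theta(\delta, p)$ can be taken of the form $c(\delta, p)/\sqrt n$; note the statement only asserts existence of such a $\theta$, so any positive choice works, and scaling $\theta$ with $1/\sqrt n$ is consistent with the $r_i$'s being of order $1/\sqrt n$.
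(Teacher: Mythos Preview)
There is a genuine gap. Your plan relies entirely on \cref{lem:LKR}, but that inequality, applied to $k$ Bernoulli summands with $r=r_i$, yields at best $\mc{L}\le C_{\ref{lem:LKR}}/\sqrt{kp}$, which falls below $1-p$ only when $k$ is large (roughly $k\gtrsim C_{\ref{lem:LKR}}^{2}/(p(1-p)^{2})$). The hypothesis $x\notin\on{Coord}(\delta)$ does \emph{not} guarantee this many significant coordinates: take $x=\alpha e_1+\beta e_2$ with $\beta\approx 2\delta$ and $\alpha=\sqrt{1-\beta^2}$. This vector lies outside $\on{Coord}(\delta)$, has exactly two nonzero coordinates, and your two-term bound $C_{\ref{lem:LKR}}/\sqrt{2p}$ can exceed $1$. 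You describe this spiky case as ``already handled'' by the two-coordinate extraction, but that extraction only feeds back into the same insufficient LKR bound, so the argument is circular.

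The paper's proof treats the spiky case \emph{without} LKR: when the second-largest coordinate satisfies $|x_2|>\delta^4$, one analyses the four-atom distribution of $b_1x_1+b_2x_2$ directly, observing that the atoms split into two $\delta^4$-separated pairs $\{0,x_1+x_2\}$ and $\{x_1,x_2\}$ carrying probability $p^2+(1-p)^2$ and $2p(1-p)$ respectively, and that $\max\{p^2+(1-p)^2,\,2p(1-p)\}<1-p$ precisely because $p<1/2$. This elementary four-point computation is the missing idea. In the complementary case $|x_2|\le\delta^4$, the paper applies LKR at the \emph{constant} radius $r=\delta^4$ (not $r\sim 1/\sqrt{n}$) to the full tail $(x_2,\ldots,x_n)$, whose $\ell^2$-mass is at least $\delta/2$; this yields $\mc{L}\lesssim\delta^3/\sqrt{p}$, which is below $1-p$ after first reducing to sufficiently small $\delta$. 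Note that this also produces $\theta$ independent of $n$, whereas your $\theta\sim 1/\sqrt{n}$ does not satisfy $\theta=\theta(\delta,p)$ as the lemma requires.
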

\begin{proof}
Since $\on{Coord}(\delta)$ is increasing with $\delta$, it suffices to prove the statement for all sufficiently small $\delta$ (depending on $p$). 
We may assume that $|x_1| \ge |x_2| \ge \cdots \ge |x_n|$. The desired conclusion follows by combining the following two cases.

\textbf{Case 1: } Suppose $|x_2|>\delta^4$. 
We claim that there is some $\theta = \theta(\delta, p)$ for which 
\[\mc{L}(b_1x_1+\dots + b_nx_n, \theta) \le \mc{L}(b_1x_1+b_2x_2,\theta)\le 1-p-\theta.\]
We borrow elements from \cite[Proposition~3.11]{LT20}. The first inequality is trivial. For the second inequality, we note that the random variable $b_1x_1 + b_2x_2$ is supported on the four points $\{0,x_1,x_2,x_1+x_2\}$. Moreover, the sets $\{0,x_1+x_2\}$ and $\{x_1,x_2\}$ are $\delta^4$-separated, and each of these two sets is attained with probability at most $\max\{p^2+(1-p)^2, 2p(1-p)\}< 1-p-\theta$, where the final inequality uses $p < 1/2$. 

\textbf{Case 2: }$|x_2| \le \delta^{4}$. Note that we must have $\snorm{(x_2,\dots,x_n)}_{2} \ge \delta/2$, since otherwise, we would have $\snorm{x - e_1}_{2} < \delta$, contradicting $x \notin \on{Coord}(\delta)$. We claim that there is some $\theta = \theta(\delta, \rho) > 0$ such that
\begin{align*}
    \mc{L}(b_1x_1+\dots + b_n x_n, \theta) \le \mc{L}(b_2 x_2 + \dots + b_n x_n, \theta) \le 1-p-\theta.
\end{align*}
Once again, the first inequality is trivial. The second inequality follows from \cref{lem:LKR} applied with $\xi_i = b_i x_i$ for $i = 2,\dots, n$, $r_i = |x_i|/4$, and $r = \delta^4$, and from our assumption that $\delta$ was small enough in terms of $p$.
\end{proof}

By combining the preceding statement with a standard net argument exploiting the low metric entropy of almost-constant vectors and the well-controlled operator norm of random matrices with independent centered subgaussian entries (cf.~{\cite[Lemma~4.4.5]{Ver18}}), we obtain invertibility on non-almost-elementary vectors. 

\begin{proposition}\label{prop:non-coord}
Fix $p\in(0,1/2)$. Then, for any $\delta' > 0$, there exist $\delta,\rho,\epsilon', n_0 > 0$ (depending on $p,\delta'$) such that for all $n \ge n_0$, 
\[\mb{P}[\exists x\in\on{Cons}(\delta,\rho)\setminus\on{Coord}(\delta'): \snorm{B_n(p)x}_2\le\epsilon'\sqrt{n}]\le (1-p-\epsilon')^{n}.\]
\end{proposition}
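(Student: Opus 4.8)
The plan is to run the classical net argument on the set $\on{Cons}(\delta,\rho)\setminus\on{Coord}(\delta')$. The key observation is that this set has low metric entropy: since every $x\in\on{Cons}(\delta,\rho)$ agrees (up to $\rho/\sqrt n$) with a single scalar $\lambda$ on at least $(1-\delta)n$ coordinates, a vector in $\on{Cons}(\delta,\rho)$ is determined, up to Euclidean error $O(\rho)$, by the choice of a $\delta n$-subset of $[n]$, the value $\lambda$ (which lies in a bounded interval since $\snorm x_2=1$) discretized to scale $\rho/\sqrt n$, and the values of $x$ on the exceptional coordinates discretized to the same scale. This gives a net $\mc N$ of $\on{Cons}(\delta,\rho)$ of size at most $\binom{n}{\delta n}(C/\rho)^{\delta n} \le (C'/\rho)^{\delta n}\cdot 2^{H(\delta)n}$ such that every $x\in\on{Cons}(\delta,\rho)$ is within $c_0\rho$ of some $y\in\mc N$ in Euclidean distance, and we may moreover take $\mc N\subseteq \on{Cons}(\delta,2\rho)$ and discard from $\mc N$ any point within, say, $\delta'/2$ of a standard basis vector (so that every retained net point lies outside $\on{Coord}(\delta'/2)$). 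Choosing $\delta$ small enough in terms of $p$ makes this cardinality at most $(1-p-2\epsilon')^{-n/2}$-type small — more precisely smaller than $e^{c_1 n}$ for any prescribed small $c_1$.

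Next I would apply \cref{lem:p-characterization} with parameter $\delta'/2$ (so it applies to all net points) to get a constant $\theta=\theta(\delta',p)>0$ with $\mc L(\sum_i b_i y_i,\theta)\le 1-p-\theta$ for each row. By independence of the $n$ rows of $B_n(p)$, a standard tensorization inequality (e.g.\ the one in \cite[Lemma~3.4]{Tik20} or the form in \cite{RV08}) yields
\[
\mb P\big[\snorm{B_n(p)y}_2 \le \theta\sqrt n/2\big] \le (1-p-\theta/2)^n
\]
for each fixed $y\in\mc N$, since each coordinate $(B_n(p)y)_j = \sum_i b_{ji}y_i$ has $\mc L(\cdot,\theta)\le 1-p-\theta$ and these are independent over $j$. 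A union bound over $\mc N$ then gives $\mb P[\exists y\in\mc N:\snorm{B_n(p)y}_2\le \theta\sqrt n/2]\le |\mc N|\cdot(1-p-\theta/2)^n \le (1-p-\epsilon')^n$, provided $\delta$ (hence $|\mc N|$) was chosen small enough in terms of $p$ and $\theta$, and $\epsilon'$ is chosen appropriately.

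Finally I would transfer from the net back to the whole set: if $x\in\on{Cons}(\delta,\rho)\setminus\on{Coord}(\delta')$ with $\snorm{B_n(p)x}_2\le\epsilon'\sqrt n$ (for $\epsilon'$ sufficiently small relative to $\theta$), pick $y\in\mc N$ with $\snorm{x-y}_2\le c_0\rho$; then, on the event that $\snorm{B_n(p)}\le C_p\sqrt n$ (which by \cite[Lemma~4.4.5]{Ver18} or a standard $\epsilon$-net bound for subgaussian matrices fails with probability $\le e^{-\Omega(n)} \ll (1-p-\epsilon')^n$), we get $\snorm{B_n(p)y}_2 \le \snorm{B_n(p)x}_2 + C_p\sqrt n\cdot c_0\rho \le \theta\sqrt n/2$, having first chosen $\rho$ small enough in terms of $\theta$ and $C_p$, and $\epsilon' < \theta/4$. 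Here I also need the net point $y$ to lie outside $\on{Coord}(\delta'/2)$, which holds because $x\notin\on{Coord}(\delta')$ and $\snorm{x-y}_2\le c_0\rho < \delta'/2$. Combining, the target probability is bounded by $\mb P[\snorm{B_n(p)}>C_p\sqrt n] + |\mc N|(1-p-\theta/2)^n \le (1-p-\epsilon')^n$ after absorbing constants. The main obstacle is bookkeeping the hierarchy of constants: one must fix $\delta'$ first, obtain $\theta$ from \cref{lem:p-characterization}, then choose $\delta$ small enough that the entropy $2^{H(\delta)n}$ times the discretization factor is beaten by $(1-p-\theta/2)^{-n}(1-p-\epsilon')^n$, and only then pick $\rho\ll\theta/C_p$ and $\epsilon'\ll\theta$; care is needed to ensure these choices are mutually consistent and that $\delta,\rho$ still satisfy any constraints (such as $\delta,\rho\in(0,1/4)$) imposed by later use in \cref{prop:compressible}.
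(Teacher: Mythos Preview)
Your approach is precisely the standard net argument the paper alludes to (and declines to spell out), and the order in which you choose parameters matches the paper's sketch: first $\epsilon'$ (your $\theta$) from \cref{lem:p-characterization}, then $\rho$, then $\delta$.

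There is, however, one concrete error in the transfer step. You assert that $\snorm{B_n(p)}\le C_p\sqrt n$ with high probability, citing \cite[Lemma~4.4.5]{Ver18}. This is false: the entries of $B_n(p)$ are not centered, and in fact $\snorm{B_n(p)}=\Theta(pn)$ with high probability (apply the matrix to $1_n/\sqrt n$). The paper is careful about this; note that the sentence preceding \cref{prop:non-coord} says ``centered subgaussian entries'', and in the proof of \cref{prop:coord-i} the event used is $\snorm{B_n(p)-pJ_{n\times n}}\le K\sqrt n$, not a bound on $\snorm{B_n(p)}$ itself. With your argument as written, the error term $\snorm{B_n(p)}\snorm{x-y}_2$ is of order $n\rho$, which swamps $\theta\sqrt n/2$ for any constant $\rho>0$.

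The fix is routine: decompose $B_n(p)(x-y)=(B_n(p)-pJ)(x-y)+pJ(x-y)$. The first term is handled by the centered operator norm bound. For the second, $\snorm{pJ(x-y)}_2=p\sqrt n\,|\sang{1_n,x-y}|$, so you must additionally control $|\sang{1_n,x-y}|$ to be $O(\rho)$. This can be arranged by augmenting the net with a discretization of $\sang{x,1_n/\sqrt n}\in[-1,1]$ at scale $\rho/\sqrt n$, which multiplies the net size by only $O(\sqrt n/\rho)=e^{o(n)}$ and is therefore harmless. With this correction, your proof goes through.
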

\begin{proof}
The argument is closely related to the proof of \cite[Proposition~3.6]{Tik20}, and we omit the standard details. The point is that we can first choose $\epsilon'$ depending on $\delta', p$, then choose $\rho$ sufficiently small depending on $\delta', p, \epsilon'$, and finally, choose $\delta$ sufficiently small depending on all prior choices.
\end{proof}

\subsection{Invertibility on almost-elementary vectors}\label{sub:almost-elementary}
We now prove the much more delicate claim that $\on{Coord}(\delta')$ contributes the appropriate size to the singularity probability.

\begin{proposition}\label{prop:coord-i}
Fix $p\in (0,1/2)$. Given $\theta' > 0$, there exist $\delta',\theta, n_0 > 0$ depending on $p$ and $\theta'$ such that for all $n \ge n_0$,
\[\mb{P}[\exists x\in\on{Coord}(\delta'): \snorm{B_n(p)x}_2\le\exp(-\theta' n)]\le n\cdot \bigg((1-p)^n+(1-p-\theta)^n\bigg).\]
\end{proposition}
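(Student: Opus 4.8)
The plan is to reduce, via a union bound over the $n$ choices of $i$, to bounding
\[\mb{P}[\exists x\in\on{Coord}_i(\delta'): \snorm{B_n(p)x}_2\le\exp(-\theta' n)]\]
by $(1-p)^n+(1-p-\theta)^n$. By symmetry we may take $i=1$. Write $B = B_n(p)$, let $v$ denote its first column, and let $B'$ denote the remaining $n\times(n-1)$ block. If $x\in\on{Coord}_1(\delta')$ then $x = x_1 e_1 + w$ with $|x_1|$ close to $1$ and $\snorm{w}_2\le \delta'$ small; then $Bx = x_1 v + B'w$, and since $\snorm{B'}\le C\sqrt{n}$ with overwhelming probability (subgaussian operator norm bound, \cite[Lemma~4.4.5]{Ver18}, absorbed into the final error term), $\snorm{Bx}_2\le\exp(-\theta'n)$ forces $\snorm{v}_2 \le |x_1|^{-1}(\exp(-\theta'n) + C\delta'\sqrt{n}) \le 2C\delta'\sqrt n$, i.e. $v$ lies within distance $O(\delta'\sqrt n)$ of the line $\mb{R} v$; more usefully, it says the columns of $B$ are "nearly dependent" in a quantified way. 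I would instead phrase the key dichotomy as in the overview: either $v = 0$ (contributing $(1-p)^n$), or $v$ is a nonzero $\{0,1\}^n$ vector which, conditioned on the event above, must lie in a universal "bad" set $\mc{B}\subseteq\{0,1\}^n\setminus\{0\}$ with $\on{Ber}(p)^{\otimes n}(\mc{B})\le (1-p+\theta)^n$ — and then show that, on our event, each fixed $v\in\mc{B}$ is attained by the first column with probability at most $\exp(-4\theta n)$, so that a union bound over $\mc{B}$ closes the estimate.

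More concretely, here is the structure I would carry out. First, condition on the columns $v_2,\dots,v_n$ of $B'$ (equivalently on $B'$) and on the operator norm bound $\snorm{B'}\le C\sqrt n$. On this event, the existence of $x\in\on{Coord}_1(\delta')$ with $\snorm{Bx}_2$ exponentially small means: there is a unit vector $x$ with $|x_1|\ge 1-\delta'$ and $\snorm{x_1 v + B'w}_2 \le \exp(-\theta' n)$ where $w = x - x_1 e_1$. Dividing by $x_1$, the vector $v$ is within $2\exp(-\theta' n)$ of $-B'(w/x_1) = B'y$ for some $y\in\mb{R}^{n-1}$ with $\snorm{y}_2 = \snorm{w}_2/|x_1| \le 2\delta'$; in particular $v\in\wt{\mc{B}}(B') := \{u\in\{0,1\}^n : \on{dist}(u,\,B'\cdot\mb{B}_2^{n-1}(0,2\delta'))\le 2\exp(-\theta'n)\}$. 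The crux is to bound $\on{Ber}(p)^{\otimes n}(\wt{\mc{B}}(B'))$ from above by $(1-p+\theta)^n$ for a typical $B'$, after removing the single vector $0$ (which sits in $\wt{\mc{B}}(B')$ trivially since $0 = B'\cdot 0$). This should follow from the fact that $B'$ has some genuinely spread-out columns — apply \cref{lem:p-characterization}, or rather its underlying Kolmogorov--L\'evy--Rogozin input \cref{lem:LKR}, to control $\mb{P}_v[v\in\wt{\mc{B}}(B')]$ by analyzing, coordinate by coordinate of $y$, the anticoncentration of $\sum_j b_j x_j$-type sums; a cleaner route is to absorb this into the non-almost-elementary machinery of \cref{prop:non-coord} applied to the "dual" picture where $v$ plays the role of the test vector and the rows of $B'$ supply independence.

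The second half — showing that for each fixed nonzero $v\in\mc{B}$, conditioned on $\{B'$ is typical$\}$, the probability that the first column equals $v$ and the bad event holds is at most $\exp(-4\theta n)$ — is where I would invoke the Rudelson--Vershynin circular-law-type estimate referenced in the overview: given that $v\ne 0$, the bad event pins $v$ near $B'\cdot\mb{B}_2^{n-1}(0,2\delta')$, a set of the kernel/near-kernel type, and the probability that an independent $\on{Ber}(p)^{\otimes n}$ vector lands in a fixed exponentially-thin affine-algebraic neighborhood, intersected with the requirement that $\snorm{B'y}_2$ is small, is exponentially small; one extracts a factor $\exp(-4\theta n)$ by choosing $\delta',\theta'$ small relative to $\theta$. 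Summing $\exp(-4\theta n)$ over the at most $(1-p+\theta)^n \cdot \binom{n}{\le 2\delta' n}$-size set $\mc{B}$ (the binomial factor being $\exp(o(n))$, hence $\le \exp(\theta n)$), and adding the $v=0$ contribution $(1-p)^n$ and the negligible operator-norm failure probability, yields $(1-p)^n + (1-p-\theta)^n$ for the single index $i=1$; multiplying by $n$ gives the claim.

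The main obstacle is the first half: establishing that $\on{Ber}(p)^{\otimes n}(\wt{\mc{B}}(B')\setminus\{0\})\le (1-p+\theta)^n$ for typical $B'$, i.e. that the near-image of a small ball under $B'$ — a genuinely $n$-dimensional but exponentially thin region — has small Bernoulli mass away from the origin. The delicate point is that this region is not a fixed set but depends on $B'$, so one must either union-bound over a net of possible $B'$ (using metric entropy of the relevant Grassmannian-type data) or, preferably, fix $v$ first and bound $\mb{P}_{B'}[v\in\wt{\mc{B}}(B')]$ using the independence of $B'$'s entries from $v$, which is exactly the kind of inversion-of-randomness step the paper specializes in; choosing the quantifier order correctly and making $\theta',\delta'$ sufficiently small compared to $\theta$ and to the anticoncentration constant from \cref{lem:p-characterization} is the technical heart.
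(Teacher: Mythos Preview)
Your high-level two-case split ($v=0$ versus $v\neq 0$) matches the paper, but the execution has a genuine gap. The claim ``$\snorm{B'}\le C\sqrt{n}$ with overwhelming probability'' is false: $B'$ has uncentered $\on{Ber}(p)$ entries, so $\snorm{B'}=\Theta(n)$ (test against $1_{n-1}/\sqrt{n-1}$). The correct bound is $\snorm{B'-pJ_{n\times(n-1)}}\le K\sqrt{n}$. This is not a cosmetic fix: with the centered bound one has $B'y=(B'-pJ)y+p\bigl(\sum_j y_j\bigr)1_n$, so $\on{dist}(B'y,\mb{R}1_n)\le K\sqrt{n}\,\snorm{y}_2$, and therefore on the bad event the first column $v$ satisfies $\on{dist}(v,\mb{R}1_n)\le 8K\delta'\sqrt{n}$. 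In other words $v$ lands in the \emph{universal} set $\mc{C}=\{x\in\{0,1\}^n:\exists\lambda,\ \snorm{x-\lambda 1_n}_2\le 8K\delta'\sqrt{n}\}$, independent of $B'$. A direct large-deviation computation then gives $\mb{P}[v\in\mc{C}]\le(1-p+\epsilon)^n$ for $\delta'$ small. This completely replaces your struggle with the $B'$-dependent set $\wt{\mc{B}}(B')$ and the ``main obstacle'' paragraph; no inversion-of-randomness or anticoncentration of $B'$ is needed here at all.

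For the $v\neq 0$ case, the paper's mechanism is a clean black-box (its \cref{lem:block}): for any fixed $a$ with $\snorm{a}_2\ge 1$, $\mb{P}_{B'}[\exists u':\snorm{B'u'+a}_2\le\exp(-\theta'n)]\le 2^{-c'n}$ with $c'=c'(\theta',p)$ \emph{independent of $\delta'$}. This independence is the crux: one fixes $c'$ first, then shrinks $\delta'$ so that $(1-p+\epsilon)^n\cdot 2^{-c'n}\le(1-p-\theta)^n$. Your final summation ``$\exp(-4\theta n)$ over the at most $(1-p+\theta)^n\cdot\binom{n}{\le 2\delta'n}$-size set'' conflates probability with cardinality; the correct bookkeeping is simply
\[
\sum_{a\in\mc{C}\setminus\{0\}}\mb{P}[v=a]\cdot 2^{-c'n}\le 2^{-c'n}\,\mb{P}[v\in\mc{C}]\le 2^{-c'n}(1-p+\epsilon)^n.
\]
So the two concrete missing ingredients are: (i) center the matrix before bounding the operator norm, which reveals that the bad set is a tube around $\mb{R}1_n$ rather than a $B'$-dependent ellipsoid; and (ii) for nonzero $a$, invoke a Rudelson--Vershynin-type least singular value bound for the rectangular matrix $B'$ to get the factor $2^{-c'n}$ with $c'$ independent of $\delta'$.
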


Before proceeding to the proof, we need the following preliminary fact, which essentially follows from the seminal work of Rudelson and Vershynin \cite{RV08} (although we were not able to locate the precise statement needed here in the literature).
\begin{lemma}\label{lem:block}
Fix $p \in (0,1)$. For any $c > 0$, there exist $c', n_0 > 0$ depending on $c$ and $p$ for which the following holds. For all $n \ge n_0$ and for any $v \in \mb{R}^{n}$ with $\snorm{v}_2\ge 1$, we have
\[\mb{P}[\exists x\in\mb{R}^{n-1}: \snorm{Ax-v}_2\le 2^{-cn}]\le 2^{-c'n},\]
where $A$ is a random $n\times(n-1)$ matrix with independent $\on{Ber}(p)$ entries.
\end{lemma}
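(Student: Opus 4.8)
The goal is to show that a random $n \times (n-1)$ Bernoulli matrix $A$ has exponentially small least singular value with only exponentially small probability, and moreover that its column span (an $(n-1)$-dimensional affine-type object sitting inside $\mb{R}^n$) stays exponentially far from any fixed unit-norm vector $v$ with exponentially good probability. The natural route is to reduce to the known quantitative invertibility theory for square Bernoulli matrices. First I would pad the picture: consider the $n \times n$ matrix $[A \mid v]$ obtained by appending the column $v$. If there exists $x \in \mb{R}^{n-1}$ with $\snorm{Ax - v}_2 \le 2^{-cn}$, then the vector $(x, -1) \in \mb{R}^n$ (suitably normalized) is an almost-null vector for $[A \mid v]$; since its last coordinate is bounded away from $0$ after normalization by $\snorm{(x,-1)}_2 \ge 1$, the normalized vector $w := (x,-1)/\snorm{(x,-1)}_2$ satisfies $\snorm{[A\mid v]\, w}_2 \le 2^{-cn}$. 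So it suffices to bound $\mb{P}[s_n([A \mid v]) \le 2^{-cn}]$.

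The cleanest way to control $s_n([A\mid v])$ is to split the unit sphere $\mb{S}^{n-1}$ into compressible/incompressible (or almost-constant/non-almost-constant) vectors in the style of Rudelson--Vershynin, but applied only to the $A$-part. For a unit vector $w = (w', w_n)$: if $|w_n|$ is not too close to $1$, then $w'$ has Euclidean norm bounded below, and I can use that $A$ restricted to such $w'$ is well-invertible — this is exactly the content of the Rudelson--Vershynin machinery for rectangular Bernoulli matrices (invertibility over compressible vectors via a small-ball plus net argument, and over incompressible vectors via the distance-to-a-hyperplane / random-normal argument), which gives $\mb{P}[\exists w' : \snorm{A w'}_2 \le c_0\snorm{w'}_2 \cdot \text{(something)}] \le 2^{-c'n}$; note $\snorm{[A\mid v]w}_2 \ge \snorm{Aw'}_2$ so the $v$ column only helps. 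If instead $|w_n|$ is close to $1$, then $w$ is close to $\pm e_n$, so $[A\mid v]w \approx \pm v$, and since $\snorm{v}_2 \ge 1$ we immediately get $\snorm{[A\mid v]w}_2 \ge 1/2 \gg 2^{-cn}$ with no randomness needed. Combining the two ranges, after choosing $c'$ appropriately small relative to $c$ and the Rudelson--Vershynin constants, yields the claim; the $2^{-cn}$ versus the constants in the invertibility bound is what forces $c'$ to depend on $c$ (and $p$).

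The main obstacle — or rather, the point requiring care — is that the standard Rudelson--Vershynin statements are phrased for square matrices with mean-zero entries, whereas here $A$ is rectangular and has mean-$p$ (hence non-centered) entries. The non-centeredness is harmless because for the purpose of lower-bounding $\snorm{Aw'}_2$ one can condition on all rows but one and use that a single row $r$ of $A$ (i.i.d.\ $\mathrm{Ber}(p)$) has good small-ball behavior against an incompressible direction; equivalently one invokes the rectangular version of the Rudelson--Vershynin invertibility bound, which is by now folklore (this is precisely why the authors say the statement "essentially follows" from \cite{RV08} even though they could not find it verbatim). Alternatively, one can center by subtracting the rank-one mean matrix $p \mathbf{1}\mathbf{1}^{\mathsf{T}}$ and absorb it, but the cleanest exposition is to cite the compressible/incompressible dichotomy directly. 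I would therefore structure the proof as: (i) the reduction to $s_n([A\mid v])$ via appending $v$; (ii) the $w_n \approx \pm 1$ case handled trivially using $\snorm{v}_2 \ge 1$; (iii) the complementary case handled by the rectangular Rudelson--Vershynin invertibility estimate applied to $A$ and the sub-vector $w'$, with a union bound over a net for the compressible part and the distance lemma for the incompressible part; (iv) optimize constants.
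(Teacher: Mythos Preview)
Your reduction in (i) to bounding $s_n([A\mid v])$ is valid, and case (ii) can be made to work (though since $\snorm{A}$ is of order $n$ for uncentred Bernoulli entries, ``close to $1$'' has to mean $|w_n|\ge 1-O(n^{-2})$, not merely bounded away from a fixed constant). The genuine gap is in case (iii). You assert that $\snorm{[A\mid v]w}_2 \ge \snorm{Aw'}_2$, so that ``the $v$ column only helps'', but this is false: $[A\mid v]w = Aw' + w_n v$, and the shift by $w_n v$ can of course decrease the norm---indeed, this is exactly what happens when $v$ lies near $\on{Im}(A)$, which is the very event you are trying to rule out. More broadly, the rectangular Rudelson--Vershynin estimate you invoke controls how far $\on{Im}(A)$ is from the \emph{origin}; it says nothing about how far $\on{Im}(A)$ is from a fixed nonzero vector $v$, which is the whole content of the lemma. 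One can attempt a repair via the distance-to-hyperplane reduction applied to $[A\mid v]$, but the relevant hyperplane is then $\on{span}(A_j:j\neq i)+\mb{R}v$, and you would additionally have to prove that its unit normal is unstructured with high probability---a nontrivial step your outline does not address.

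The paper avoids this by splitting $A$ along \emph{rows} rather than augmenting by a column: write $A=\begin{bmatrix}R\\B\end{bmatrix}$ and $v=\begin{bmatrix}v_1\\v'\end{bmatrix}$ with $B$ square of size $(n-1)\times(n-1)$ and $\snorm{v'}_2\ge 1/2$. The square Rudelson--Vershynin bound gives $s_{n-1}(B)>2^{-cn/2}$ except with probability $2^{-c_1 n}$; on that event any $x$ with $\snorm{Ax-v}_2\le 2^{-cn}$ is pinned within $2^{-cn/2}$ of $B^{-1}v'$, so the remaining row forces the scalar constraint $|\sang{R,B^{-1}v'}-v_1|\lesssim n\,2^{-cn/2}$. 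Since $R$ is a fresh Bernoulli row independent of $B$, this reduces to showing that the direction $x_0:=B^{-1}v'/\snorm{B^{-1}v'}_2$ has exponentially small L\'evy concentration with high probability over $B$, which again follows from the Rudelson--Vershynin analysis. The row-splitting isolates one independent row to carry the anticoncentration, and this is precisely the idea missing from your column-augmentation approach.
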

\begin{proof}By reindexing the coordinates, we may write
\[
A = \begin{bmatrix}R\\B\end{bmatrix},\quad v = \begin{bmatrix}v_1\\v'\end{bmatrix},
\]
where $B$ is an $(n-1) \times (n-1)$ matrix, $v' \in \mb{R}^{n-1}$ and $\snorm{v'}_2 \ge 1/2$. Let $\mc{E} = \{s_{n-1}(B)\le 2^{-cn/2}\}$. Then, by an extension of the main result of Rudelson and Vershynin \cite{RV08} (see, e.g., \cite[Theorem~1.3]{JSS20smooth} for a concrete reference), $\mb{P}[\mc{E}]\le 2^{-c_1n}$ for some $c_1 > 0$ depending on $c$ and $p$.  

Moreover, on the event $\mc{E}^{c}$, if there exists some $x \in \mb{R}^{n-1}$ such that $\snorm{Ax - v}_2 \le 2^{-cn}$, then 
\[\snorm{Bx-v'}_2\le 2^{-cn}\implies\snorm{x-B^{-1}v'}_2\le 2^{-cn/2}, \quad \text{ and }\]
\[|Rx-v_1|\le 2^{-cn}\implies |R(B^{-1}v')-v_1|\le 2^{-cn}+n2^{-cn/2}.\]

Let $x_0 := B^{-1}v'/\snorm{B^{-1}v'}_2$; this is a random vector depending on $B$. It follows from a straightforward modification of the argument of Rudelson and Vershynin that there exists a constant $c_2 > 0$, depending on $c$ and $p$, such that with probability at least $1-2^{-c_2n}$, $\mc{L}(\sum_{i=1}^{n-1} b_i \cdot (x_0)_i, 2^{-cn/4}) \le c_2^{-1} 2^{-c_2n}$, where $b_1,\dots, b_n$ are independent $\on{Ber}(p)$ random variables (again, one has to take into account that $\on{Ber}(p)$ does not have mean $0$, but this is not an issue, see, e.g., \cite{JSS20smooth}). Let $\mc{G}$ denote the event (depending on $B$) that this occurs.

Finally, note that since $\snorm{B} \le n$ deterministically, we have $\snorm{B^{-1}v'}_{2} \ge 1/(2n)$. Hence, we see that for all $n$ sufficiently large depending on $c$ and $p$, we have
\begin{align*}
    \mb{P}[\mc{E}^{c} \wedge \snorm{Ax - v}_{2} \le 2^{-cn}]
    &\le \mb{P}[\mc{E}^{c} \wedge |R(B^{-1}v') - v_1|\le 2n\cdot 2^{-cn/2}]\\
    &\le  \mb{P}[\mc{E}^{c} \wedge \mc{G} \wedge |R(B^{-1}v') - v_1|\le 2n\cdot 2^{-cn/2}] + 2^{-c_2n}\\
    &\le \mc{L}\bigg(\sum_{i=1}^{n-1}b_i \cdot (x_0)_i, 4n^2\cdot 2^{-cn/2}\bigg) + 2^{-c_2n}\le 2^{-c_2n/2}, 
\end{align*}
which completes the proof.
\end{proof}

Now we are ready to prove \cref{prop:coord-i}.

\begin{proof}[Proof of \cref{prop:coord-i}]
By taking a union bound, and exploiting the permutation invariance of the distribution of the matrix, it suffices to prove the statement for $\on{Coord}_{1}(\delta')$ and without the additional factor of $n$ on the right hand side. Let $\mc{G}_K$ denote the event that $\snorm{B_n(p) - pJ_{n\times n}} \le K\sqrt{n}$, where $J_{n\times n}$ denotes the $n\times n$ all ones matrix. We fix a choice of $K$ such that $\mb{P}[\mc{G}_K^c] \le \exp(-2n)$, which is possible by \cite[Lemma~4.4.5]{Ver18}.  

For $\delta' \in (0,1/4)$, let $\mc{E}_{\delta'}$ denote the event that there exists some $x \in \on{Coord}_{1}(\delta')$ such that $\snorm{B_n(p)x}_{2} \le \exp(-\theta'n)$. By rescaling, we see that on the event $\mc{E}_{\delta'}$, there exists $y = e_1 + u \in \mb{R}^{n}$ with $u_1 = 0$ and $\snorm{u}_{2} \le 4\delta'$ for which $\snorm{B_n(p)y}_{2} \le 2\exp(-\theta'n)$. For convenience, let $u' := (u_2,\dots,u_n) \in \mb{R}^{n-1}$. Writing 
\[
B_n(p) = \begin{bmatrix}
b_{11}&R\\
S&B_{n-1}
\end{bmatrix},
\]
we see that
\[|Ru'+b_{11}|\le 2\exp(-\theta'n),\qquad\snorm{B_{n-1}u'+S}_2\le 2\exp(-\theta'n).\]
Let $B^{(1)}$ denote the first column of $B_n(p)$ and let $B^{(-1)}$ denote the $n\times (n-1)$ matrix formed by excluding the first column of $B_n(p)$. Then, on the event $\mc{E}_{\delta'} \wedge \mc{G}_K$, we have 
\begin{align*}
    \snorm{B^{(1)} + pJ_{n\times n-1}u'}_{2} 
    &= \snorm{B^{(1)} + B^{(-1)}u' - B^{(-1)}u' + pJ_{n\times n-1}u'}_{2}\\
    &\le \snorm{B^{(1)} + B^{(-1)}u'}_{2} + \snorm{(B^{(-1)} - pJ_{n\times n-1})u'}_{2}\\
    &\le 4\exp(-\theta'n) + K\sqrt{n}\cdot 4\delta'\\
    &\le 8K\delta'\sqrt{n}.
\end{align*}
The key point is the following. Let $\mc{C} := \{x \in \{0,1\}^{n}: \exists \lambda \in \mb{R} \text{ such that } \snorm{x - \lambda 1_n}_{2} \le 8K\delta'\sqrt{n}\}$, where $1_n$ denotes the $n$-dimensional all ones vector. Then, it is readily seen that for any $\epsilon > 0$, there exists $\delta' > 0$ sufficiently small so that
\[\mb{P}[B^{(1)}\in\mc{C}] \le (1-p+\epsilon)^{n}.\]
To summarize, we have shown that for any $\theta', \epsilon > 0$, there exists $\delta' \in (0,1/4)$ such that 
\begin{align}
\label{eq:union-bound-a}
    \mb{P}[\mc{E}_{\delta'}]
    &\le \mb{P}[\mc{E}_{\delta'} \wedge \mc{G}_K] + \exp(-2n) \nonumber \\
    &\le \sum_{a\in\mc{C}}\mb{P}[B^{(1)} = a]\cdot \mb{P}[\exists u' \in \mb{R}^{n-1}, \snorm{u'}_{2} \le 4\delta': \snorm{B^{(-1)}u' + a} \le 2\exp(-\theta'n)].
\end{align}
If we only wanted a bound of the form $(1-p+\epsilon)^n$ on the right hand side, then we would be done. However, since we want a more precise bound, we need to perform a more refined analysis based on whether or not $a = 0$. 

\textbf{Case I: }$a = 0$. The contribution of this term to the sum in \cref{eq:union-bound-a} is exactly $(1-p)^{n}$. 

\textbf{Case II: }$a \neq 0$. Since $a \in \{0,1\}^{n}$, we have $\snorm{a}_{2} \ge 1$. In this case, we can apply \cref{lem:block} with $c = \theta'$ to see that, for all $n$ sufficiently large, 
\[\mb{P}[\exists u' \in \mb{R}^{n-1}, \snorm{u'}_{2} \le 4\delta': \snorm{B^{(-1)}u' + a} \le 2\exp(-\theta'n)] \le 2^{-c'n},\]
where $c' > 0$ depends only on $\theta'$ and $p$. Thus, we see that the contribution of $a\in\mc{C}$, $a\neq 0$ to the sum in \cref{eq:union-bound-a} is at most
\[(1-p+\epsilon)^{n}\times 2^{-c'n}.\]
Since $c'$ does not depend on $\delta'$, we can fix $c'$ (depending on $\theta'$ and $p$), and then choose $\delta'$ sufficiently small so that $\epsilon > 0$ is small enough to make the above product at most $(1-p-\theta)^{n}$, for some $\theta > 0$ depending on the previous parameters.
\end{proof}

We now put everything together to prove \cref{prop:compressible}.

\begin{proof}[Proof of \cref{prop:compressible}]
Let $c > 0$ be as in the statement of the proposition, and choose $\theta' > 0$ such that $\exp(-\theta'n) = 2^{-cn}$. Then, applying  \cref{prop:coord-i} with this choice of $\theta'$, we find $\delta', \theta, n_0$ such that for all $n\ge n_0$, we have the desired estimate for $x \in \on{Coord}(\delta')$ (provided that $\epsilon$ is chosen small enough). Then, we apply  \cref{prop:non-coord} with this choice of $\delta'$ to find $\delta,\rho, n_0 > 0$ such that for all $n \ge n_0$, we have the desired estimate for $x\in \on{Cons}(\delta,\rho)\setminus\on{Coord}(\delta')$, provided again that we choose $\epsilon > 0$ sufficiently small. 
\end{proof}

\section{The conditional threshold function of random lattice points}\label{sec:inversion-of-randomness}
In this section, we establish the key `inversion of randomness' estimate on vectors with poor anticoncentration on the slice of the Boolean hypercube. This follows a direction introduced by Tikhomirov \cite{Tik20}. In this approach, the relevant L\'evy concentration function of a random vector is replaced with certain random averages of functions. One then shows that the random vectors with large values of the L\'evy concentration function are \emph{super-exponentially} rare, by first demonstrating a weaker notion of anticoncentration after revealing $(1-\epsilon)n$ coordinates of the random vector, and then iterating a smoothing procedure on linear-sized pieces of the vector which allows one to bootstrap the strength of anticoncentration considered. Our major challenges lie in the non-independence of the coordinates of a vector on the Boolean slice, as the arguments in \cite{Tik20} rely strongly on the independence structure of the considered model.

\label{sec:random-averaging}
\subsection{Statement and preliminaries}
Let $N,n\ge 1$ be integers and let $0 < \delta < 1/4$, $K_3 > K_2 > K_1 > 1$ be real parameters. We say that $\mc{A}\subseteq\mb{Z}^n$ is $(N,n,K_1,K_2,K_3,\delta)$-admissible if
\begin{itemize}
    \item $\mc{A} = A_1\times\cdots\times A_n$, where each $A_i$ is a subset of $\mb{Z}$,
    \item $|A_1|\cdots|A_n|\le (K_3N)^n$,
    \item $\max_i\max\{|a|: a\in A_i\}\le nN$,
    \item $A_i$ is an integer interval of size at least $2N+1$ for $i > 2\delta n$, and either (P1) and (P2) or (Q1) and (Q2) hold:
\end{itemize}
\begin{enumerate}[(P1)]
    \item $A_{2i}$ is an integer interval of size at least $2N+1$ and is contained in $[-K_1N,K_1N]$ for $i\le\delta n$,
    \item $A_{2i-1}$ is symmetric about $0$ and is a union of two integer intervals of total size at least $2N$ with $A_{2i-1}\cap[-K_2N,K_2N] = \emptyset$ for $i\le\delta n$.
\end{enumerate}
\begin{enumerate}[(Q1)]
    \item $A_{2i}$ is an integer interval of size at least $2N+1$ and is contained in $[K_1N,K_2N]$ for $i\le\delta n$,
    \item $A_{2i-1}$ is an integer interval of size at least $2N+1$ contained in $[-K_2N,-K_1N]$ for $i\le\delta n$.
\end{enumerate}
\begin{remark}
The conditions on sets up to $2\delta n$, upon rescaling (and application of one of an exponential-sized set of permutations), correspond to the fact that the potential normal vectors considered are not $(\delta,\rho)$-almost constant, with (P1), (P2) corresponding to the first conclusion of \cref{lem:nonconstant-admissible} and (Q1), (Q2) corresponding to the second conclusion.
\end{remark}

Let $\mc{A} = A_1\times\cdots\times A_n$ be an $(N,n,K_1,K_2,K_3,\delta)$-admissible set, and let $(X_1,\ldots,X_n)$ be the random vector uniformly distributed on $\mc{A}$. For any $f: \mb{Z}\to\mb{R}$ and any $0\le s\le\ell\le n$, define the random function (depending on the randomness of $X_1,\dots,X_n$):
\[f_{\mc{A},s,\ell}(t) := \mb{E}_b\bigg[f\bigg(t+\sum_{j=1}^\ell b_jX_j\bigg)\bigg|\sum_{j=1}^\ell b_j=s\bigg] = \frac{1}{\binom{\ell}{s}}\sum_{v\in\{0,1\}_s^\ell}f(t+v_1X_1+\cdots+v_\ell X_\ell),\]
where $\mb{E}_b$ denotes the expectation over a uniform random vector $b = (b_1,\dots,b_n)\in\{0,1\}^n$.

The main result of this section is the following. 
\begin{theorem}\label{thm:inversion-of-randomness}
For $0 < \delta < 1/4$, $K_3 > K_2 > K_1 > 1$, $p\in(0,1/2]$, $\epsilon\ll p$, and a given parameter $M\ge 1$, there are  $L_{\ref{thm:inversion-of-randomness}} = L_{\ref{thm:inversion-of-randomness}}(p,\epsilon,\delta,K_1,K_2,K_3) > 0$ and $\gamma_{\ref{thm:inversion-of-randomness}} = \gamma_{\ref{thm:inversion-of-randomness}}(p,\epsilon,\delta,K_1,K_2,K_3)\in(0,\epsilon)$ independent of $M$, and $n_{\ref{thm:inversion-of-randomness}} = n_{\ref{thm:inversion-of-randomness}}(p,\epsilon,\delta,K_1,K_2,K_3,M)\ge 1$ and $\eta_{\ref{thm:inversion-of-randomness}} = \eta_{\ref{thm:inversion-of-randomness}}(p,\epsilon,\delta,K_1,K_2,K_3,M)\in(0,1]$ such that the following holds. 

Let $n\ge n_{\ref{thm:inversion-of-randomness}}$ and $1\le N\le\binom{n}{pn}\exp(-\epsilon n)$. Let $\mc{A}$ be $(N,n,K_1,K_2,K_3,\delta)$-admissible, and consider a nonnegative function $f\in\ell_1(\mb{Z})$ with $\snorm{f}_1 = 1$ such that $\log_{2} f$ is $\eta_{\ref{thm:inversion-of-randomness}}$-Lipschitz. Then, for any $m\in[pn-\gamma_{\ref{thm:inversion-of-randomness}}n,pn+\gamma_{\ref{thm:inversion-of-randomness}}n]$,
\[\mb{P}[\snorm{f_{\mc{A},m,n}}_\infty\ge L_{\ref{thm:inversion-of-randomness}}(N\sqrt{n})^{-1}]\le\exp(-Mn).\]
\end{theorem}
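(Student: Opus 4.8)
Let me think about this carefully. We have an admissible product set $\mc{A} = A_1 \times \cdots \times A_n$, a random point $X = (X_1,\ldots,X_n)$ uniform on $\mc{A}$, and a log-Lipschitz density $f$. We want to show that with probability $1 - \exp(-Mn)$ over the choice of $X$, the "conditional averaging" function $f_{\mc{A},m,n}$ has sup-norm $\lesssim (N\sqrt{n})^{-1}$.

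The base case / first step: reveal the last $(1-2\epsilon)n$ or so coordinates — actually reveal "most" coordinates, say we set $\ell = (1-c)n$ for some small $c$, and look at $f_{\mc{A},s,\ell}$. Actually no. Let me reconsider the structure. The approach follows Tikhomirov's Theorem B.

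Step 1 (weak anticoncentration after revealing most coordinates): First show that after conditioning on $X_{\ell+1},\ldots,X_n$ for $\ell = n - \epsilon' n$ (say), the random function $f_{\mc{A}, s, \ell}$ (for appropriate $s \approx p\ell$) already satisfies a *weak* bound, something like $\snorm{f_{\mc{A},s,\ell}}_\infty \le (\text{poly}(n)/N)$ or even just $\le n^{-1/2+o(1)}$ type bound, with probability $1 - \exp(-Mn)$. The point: this weak bound should be obtainable via the Kolmogorov–Lévy–Rogozin inequality (Lemma \ref{lem:LKR}) or a direct Esseen-free argument, using the admissibility conditions (P1)(P2) or (Q1)(Q2) — these guarantee that among the first $2\delta n$ coordinates, there are $\delta n$ "pairs" which, after the Bernoulli selection, contribute genuine spread at two different scales (one within $[-K_1 N, K_1 N]$, one outside $[-K_2 N, K_2 N]$, or the signed version). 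So the random sum $\sum b_j X_j$ has Lévy concentration function at scale $\sim N$ bounded by $C/\sqrt{n}$ regardless of the realization of $X$; averaging $f$ against it inherits this.

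Wait — but the conditioning on $\sum b_j = s$ breaks independence of the $b_j$. This is exactly "the major challenge." Here I'd use the standard trick: condition further on the value of $\sum_{j \in \text{most}} b_j$, so that on the remaining (linear-sized) block the $b_j$ are uniform on a slice of that block, and a slice of a Boolean cube of linear size still has good anticoncentration properties for generic coefficients — one can compare the slice measure to the product measure at a polynomial (in fact $\sqrt{n}$) cost, or better, run the argument directly on the slice via a swapping/switching argument. I'd want to replace Lemma \ref{lem:LKR} (which is for independent summands) by a slice analogue: partition the relevant coordinates into pairs $\{2i-1, 2i\}$, and within each pair the conditional law of $(b_{2i-1}, b_{2i})$ given the total and given the other pairs is exchangeable, so $b_{2i-1} X_{2i-1} + b_{2i} X_{2i}$ takes each of the values $X_{2i-1}, X_{2i}$ with equal conditional probability when exactly one of the two is chosen — and by the admissibility scale separation these two values are $\gtrsim N$ apart. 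A Lévy–Rogozin-type bound then applies to the sum over pairs.

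Step 2 (bootstrapping / smoothing iteration): Having the weak bound, run Tikhomirov's smoothing iteration. Define a decreasing sequence of "strengths," and at each stage take a linear-sized block of fresh coordinates $J_k$ (of size $\epsilon'' n$), and use the identity
\[
f_{\mc{A}, m, n}(t) = \mb{E}\big[ f_{\mc{A}, m - |b_{J_k}|, n \setminus J_k}\big(t + \textstyle\sum_{j \in J_k} b_j X_j\big)\big],
\]
recognizing the right side as an average of translates of a function with a previously-established (weaker) bound, against the random-walk increment $\sum_{j\in J_k} b_j X_j$ restricted to a sub-slice. The block $J_k$ has coordinates whose $A_j$ are genuine intervals of length $\ge 2N+1$, so the increment is "spread out" at scale $N$ over a range of size $\gtrsim nN$; convolving a bounded log-Lipschitz function against such a spread-out distribution on a slice improves the $L^\infty$ bound by a factor of roughly $N^{-1} \cdot (\text{range})^{1/2} \sim \sqrt{n}$ per... no, wait: each step should multiply the bound by a constant factor $<1$ (gaining a constant, losing nothing), and after $O(1/\epsilon'')$ steps we reach the target $L_{\ref{thm:inversion-of-randomness}}(N\sqrt{n})^{-1}$. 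The log-Lipschitz hypothesis on $f$ (with the tiny constant $\eta_{\ref{thm:inversion-of-randomness}}$) is what lets us say that $f$ doesn't oscillate within a window of size $N$, so that averaging over a residue class mod (something of size $\sim N$) genuinely flattens it; and crucially log-Lipschitzness is *preserved* under the averaging operations (with the same or better constant), so the iteration stays within the hypothesis class.

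**Main obstacle.** The hardest part is Step 1 combined with the slice dependence throughout: getting a *clean* weak-anticoncentration estimate for $f_{\mc{A},s,\ell}$ that holds for *every* realization of $X$ (not just with high probability) and at the right scale $N$, while the Bernoulli selectors $b_j$ live on a slice rather than being independent. Tikhomirov's argument leans heavily on independence of the $b_j$ (e.g. to split off single coordinates and condition freely). Here one must instead always work with pairs (or blocks) and exchangeability, track how conditioning on partial sums $\sum_{j \in \text{block}} b_j$ distorts the relevant distributions, and show the distortion costs at most a $\mathrm{poly}(n)$ (ideally $\le C\sqrt n$) factor that is absorbed by the $\exp(-\epsilon n)$ slack in the hypothesis $N \le \binom{n}{pn}\exp(-\epsilon n)$. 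Managing the interaction of $m$ ranging over $[pn - \gamma n, pn + \gamma n]$ with the slice conditioning — ensuring that after removing blocks the residual slice parameter stays in a "balanced" regime where anticoncentration is still good — is the key bookkeeping difficulty, and is presumably where the constant $\gamma_{\ref{thm:inversion-of-randomness}} \in (0,\epsilon)$ gets pinned down.
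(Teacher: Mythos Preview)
Your two-step architecture (a weak $L^\infty$ bound on $f_{\mc{A},s,\ell}$ with probability $1-\exp(-Mn)$, followed by a constant-factor smoothing iteration using the log-Lipschitz hypothesis) matches the paper. The pairing trick you describe for handling the slice is also exactly what the paper uses, both for the slice analogue of Kolmogorov--L\'evy--Rogozin (\cref{lem:slkr}) and for reducing the decrement step (\cref{prop:slice-linfty-decrement}) to Tikhomirov's independent-case result as a black box.

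The genuine gap is in Step~1. You write that ``the random sum $\sum b_j X_j$ has L\'evy concentration function at scale $\sim N$ bounded by $C/\sqrt{n}$ regardless of the realization of $X$; averaging $f$ against it inherits this.'' But a L\'evy-concentration bound for the convolving measure is an \emph{$L^1$-on-intervals} statement: it yields $\sum_{t\in J} f_{\mc{A},s,\ell}(t) \le C|J|/(N\sqrt{n})$ for intervals $|J|\ge N$ (this is \cref{lem:slice-levy-kolmogorov-rogozin}), not an $L^\infty$ bound. You cannot convert this to $\snorm{f_{\mc{A},s,\ell}}_\infty \lesssim (N\sqrt{n})^{-1}$ using log-Lipschitzness of $f$ either, since $\eta$ is only a constant while $N$ may be exponential in $n$. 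The paper's mechanism for Step~1 is different and is the heart of the argument: one unfolds the recursion $f_{\mc{A},s,\ell}(t) = (1-s/\ell)f_{\mc{A},s,\ell-1}(t) + (s/\ell)f_{\mc{A},s-1,\ell-1}(t+X_\ell)$ along a greedy path (the \emph{step record} and \emph{averaging sequence}), and shows via the exact entropy identity $\prod_{i\le\ell}(1-\ol W_i)^{1-w_i}\ol W_i^{w_i} = \binom{\ell}{s}^{-1}$ combined with the hypothesis $N\le\binom{n}{pn}\exp(-\epsilon n)$ that any such path must contain $\Omega_\epsilon(n)$ ``robust non-$R$-drop'' steps (\cref{lem:entropy}). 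Each such step is then controlled \emph{in expectation} using the deterministic $L^1$-on-intervals bound, and a union bound over paths finishes. So the $\exp(-\epsilon n)$ slack is spent here, in the entropy comparison, and not (as you suggest) to absorb a $\mathrm{poly}(n)$ slice-versus-product distortion.

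Two smaller corrections. First, the output of Step~1 (\cref{prop:rough-Linfty}) is already of the correct order $L'(N\sqrt{n})^{-1}$, just with $L'$ depending on $M$; the iteration in Step~2 only serves to replace $L'$ by an $M$-independent constant, running $q=O_M(1)$ rounds on the last $\gamma n$ coordinates split into $q$ equal blocks. Second, your ``main obstacle'' paragraph has it backwards: the deterministic (valid for every $X$) part is the $L^1$-on-intervals bound; the $L^\infty$ bound is intrinsically probabilistic over $X$.
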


\begin{remark}
The key point, as in \cite[Theorem~4.2]{Tik20}, is that $L_{\ref{thm:inversion-of-randomness}}$ does not depend on $M$. Moreover, as in  \cite{Tik20}, one can easily see that the log-Lipschitz condition is necessary by considering the case when $f = \mbm{1}_{\{0\}}$, and analyzing the event where $X_i \in q\cdot \mb{Z}$ for all $i \in [n]$.
\end{remark}

In this paper, we will use the above theorem through the following immediate corollary.

\begin{definition}
\label{def:conditional-levy}
Let $p \in (0,1/2]$, $\gamma \in (0,p)$, and $r\ge 0$. For a vector $(x_1,\dots,x_n) \in \mb{R}^{n}$, we define
\begin{align*}
    \mc{L}_{p,\gamma}\bigg(\sum_{i=1}^nb_i x_i, r\bigg) := \sup_{z\in \mb{R}}\mb{P}\bigg[\bigg|\sum_{i=1}^nb_i x_i - z \bigg| \le r \bigg| \sum_{i=1}^{n} b_i \in [pn - \gamma n, pn + \gamma n]\bigg],
\end{align*}
where $b_1,\dots,b_n$ are independent $\on{Ber}(p)$ random variables. 
\end{definition}

\begin{corollary}\label{cor:threshold-inversion}
For $0 < \delta < 1/4$, $K_3 > K_2 > K_1 > 1$, $p\in(0,1/2]$, $\epsilon\ll p$, and a given parameter $M\ge 1$, there are  $L_{\ref{cor:threshold-inversion}} = L_{\ref{cor:threshold-inversion}}(p,\epsilon,\delta,K_1,K_2,K_3) > 0$ and $\gamma_{\ref{cor:threshold-inversion}} = \gamma_{\ref{cor:threshold-inversion}}(p,\epsilon,\delta,K_1,K_2,K_3)\in(0,\epsilon)$ independent of $M$ and $n_{\ref{cor:threshold-inversion}} = n_{\ref{cor:threshold-inversion}}(p,\epsilon,\delta,K_1,K_2,K_3,M)\ge 1$ such that the following holds. 

Let $n\ge n_{\ref{cor:threshold-inversion}}$, $1\le N\le\binom{n}{pn}\exp(-\epsilon n)$, and $\mc{A}$ be $(N,n,K_1,K_2,K_3,\delta)$-admissible. Suppose also that  $\gamma\le\gamma_{\ref{cor:threshold-inversion}}$. Then 
\[\bigg| \bigg\{x\in \mc{A}: \mc{L}_{p,\gamma}\bigg(\sum_{i=1}^nb_ix_i,\sqrt{n}\bigg)\ge L_{\ref{cor:threshold-inversion}}N^{-1}\bigg\}\bigg|\le e^{-Mn}|\mc{A}|.\]
\end{corollary}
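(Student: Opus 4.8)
The plan is to deduce \cref{cor:threshold-inversion} from \cref{thm:inversion-of-randomness} by a standard transference argument: the quantity $\mc{L}_{p,\gamma}(\sum_i b_i x_i, \sqrt n)$ is, after normalization, exactly the supremum of a random average of the indicator $\mbm{1}_{[z-\sqrt n, z+\sqrt n]}$, which is the kind of object $f_{\mc{A},m,n}$ controls, except that the indicator is not log-Lipschitz. So the first step is to replace $\mbm{1}_{[z-\sqrt n, z+\sqrt n]}$ by a suitable smooth majorant $f$. Concretely, I would fix a nonnegative $f \in \ell_1(\mb Z)$ with $\snorm{f}_1 = 1$, with $\log_2 f$ being $\eta_{\ref{thm:inversion-of-randomness}}$-Lipschitz, and with $f(t) \ge c/\sqrt n$ for all $t \in [-\sqrt n, \sqrt n]$ (for instance a discretized tent or Gaussian-like bump of width $\Theta(\sqrt n/\eta_{\ref{thm:inversion-of-randomness}})$, renormalized; here $\eta_{\ref{thm:inversion-of-randomness}}$ is the constant produced by applying \cref{thm:inversion-of-randomness} with the given $p, \epsilon, \delta, K_1, K_2, K_3, M$). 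Such an $f$ exists because $\sqrt n \gg 1$ and $\eta_{\ref{thm:inversion-of-randomness}}$ is a fixed constant, so a log-Lipschitz bump of the required width has height $\Theta(1/\sqrt n)$ at its peak; this is where the hypothesis $n \ge n_{\ref{cor:threshold-inversion}}$ enters, and $n_{\ref{cor:threshold-inversion}}$ is taken to be $\max(n_{\ref{thm:inversion-of-randomness}}, \text{(threshold for the bump to exist)})$.

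Next, I would relate the conditional L\'evy function of a \emph{fixed} $x \in \mc A$ to $\snorm{f_{\mc{A},m,n}}_\infty$ for that same $x$, by averaging over the conditioning event $\{\sum b_i \in [pn-\gamma n, pn+\gamma n]\}$. Writing $m$ for a generic value in this interval, for any $z$ we have
\[
\mb P\bigg[\bigg|\sum_i b_i x_i - z\bigg| \le \sqrt n \;\bigg|\; \sum_i b_i = m\bigg] \le \frac{\sqrt n}{c}\cdot\mb E_b\bigg[f\bigg(z + \sum_i b_i x_i\bigg)\;\bigg|\;\sum_i b_i = m\bigg] = \frac{\sqrt n}{c}\cdot f_{\mc A, m, n}(z),
\]
using $\mbm 1_{[-\sqrt n,\sqrt n]} \le (\sqrt n/c)\, f$ pointwise. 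Taking the supremum over $z$ and then over $m$ in the window, $\mc{L}_{p,\gamma}(\sum_i b_i x_i, \sqrt n) \le (\sqrt n/c)\sup_m \snorm{f_{\mc A, m, n}}_\infty$ for that fixed $x$. Hence, if $x$ lies in the bad set $\{x \in \mc A: \mc{L}_{p,\gamma}(\sum_i b_i x_i, \sqrt n) \ge L_{\ref{cor:threshold-inversion}} N^{-1}\}$, then for \emph{some} $m$ in the window we have $\snorm{f_{\mc A, m, n}}_\infty \ge (c L_{\ref{cor:threshold-inversion}})/(N\sqrt n)$. Choosing $L_{\ref{cor:threshold-inversion}} := L_{\ref{thm:inversion-of-randomness}}/c$ and $\gamma_{\ref{cor:threshold-inversion}} := \gamma_{\ref{thm:inversion-of-randomness}}$ (so that the window $[pn-\gamma n, pn+\gamma n] \subseteq [pn - \gamma_{\ref{thm:inversion-of-randomness}} n, pn+\gamma_{\ref{thm:inversion-of-randomness}} n]$ whenever $\gamma \le \gamma_{\ref{cor:threshold-inversion}}$), the event ``$x$ is bad'' is contained in $\bigcup_m \{\snorm{f_{\mc A, m, n}}_\infty \ge L_{\ref{thm:inversion-of-randomness}}(N\sqrt n)^{-1}\}$ over the $O(n)$ integers $m$ in the window.

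Finally, I would convert this into a counting statement. Let $B \subseteq \mc A$ be the bad set and let $X$ be uniform on $\mc A$, so $|B|/|\mc A| = \mb P[X \in B]$. By the previous paragraph and a union bound over the at most $2\gamma n + 1$ values of $m$,
\[
\mb P[X \in B] \le \sum_{m} \mb P\big[\snorm{f_{\mc A, m, n}}_\infty \ge L_{\ref{thm:inversion-of-randomness}}(N\sqrt n)^{-1}\big] \le (2\gamma n + 1)\exp(-M'n),
\]
where we apply \cref{thm:inversion-of-randomness} with the smooth bump $f$ and with $M' := M+1$ in place of $M$ (absorbing the polynomial factor $2\gamma n+1$ into the gain $\exp(-n)$ for $n$ large, and noting that $L_{\ref{thm:inversion-of-randomness}}, \gamma_{\ref{thm:inversion-of-randomness}}, \eta_{\ref{thm:inversion-of-randomness}}$ depend only on $p,\epsilon,\delta,K_1,K_2,K_3$ and $M'=M+1$, hence ultimately only on $p,\epsilon,\delta,K_1,K_2,K_3,M$ as required). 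This gives $|B| \le \exp(-Mn)|\mc A|$ for $n \ge n_{\ref{cor:threshold-inversion}}$, which is the claim. I do not expect a genuine obstacle here: the only points requiring care are (i) constructing the log-Lipschitz majorant $f$ of the indicator with peak height $\Theta(1/\sqrt n)$ — routine but it must respect the fixed Lipschitz constant $\eta_{\ref{thm:inversion-of-randomness}}$, which forces the width to be $\Theta(\sqrt n)$ and is compatible since $\snorm f_1 = 1$ then forces the height to be $\Theta(1/\sqrt n)$ — and (ii) bookkeeping the dependence of constants and the union bound over the $O(n)$ values of $m$, which is handled by upgrading $M \mapsto M+1$.
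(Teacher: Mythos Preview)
Your approach is the paper's approach --- the paper applies \cref{thm:inversion-of-randomness} to the specific function $f(t) = 2^{-|t|/\sqrt n}/\iota$ (with $\iota$ the normalizing constant) and then union bounds over the $O(n)$ values of $m$ --- but there is one real issue in your bookkeeping, and it matters because the $M$-independence of $L_{\ref{cor:threshold-inversion}}$ is precisely the content of the statement.

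You propose a bump of width $\Theta(\sqrt n/\eta_{\ref{thm:inversion-of-randomness}})$. Since $\eta_{\ref{thm:inversion-of-randomness}}$ depends on $M$ (as you note), such a bump has peak height $c/\sqrt n$ with $c = c(\eta_{\ref{thm:inversion-of-randomness}}) = c(M)$, and then your $L_{\ref{cor:threshold-inversion}} := L_{\ref{thm:inversion-of-randomness}}/c$ inherits an $M$-dependence, which is not allowed. The fix is to take the width to be $\Theta(\sqrt n)$ \emph{independent of} $\eta_{\ref{thm:inversion-of-randomness}}$: then $c$ is an absolute constant and the log-Lipschitz constant of $f$ is $O(1/\sqrt n)$, which is $\le \eta_{\ref{thm:inversion-of-randomness}}$ once $n \ge n_{\ref{cor:threshold-inversion}}(M)$ is taken large enough (this dependence of $n_{\ref{cor:threshold-inversion}}$ on $M$ is permitted). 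That is exactly the paper's choice $f(t)=2^{-|t|/\sqrt n}/\iota$, for which $\iota \asymp \sqrt n$ and hence $f(t)\ge c/\sqrt n$ on $[-\sqrt n,\sqrt n]$ with $c$ absolute. A secondary remark: neither a tent nor a Gaussian is actually globally log-Lipschitz on $\mb Z$ (the tent has $\log_2 f = -\infty$ off its support; for the Gaussian, $|\log_2 f(t)-\log_2 f(s)|$ scales like $|t+s|\,|t-s|$), so you do need an exponential-tailed profile such as the paper's.
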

\begin{remark}
In our application, $K_1,K_2, K_3, \delta$ will be determined by the parameters coming from \cref{prop:compressible}, and $\epsilon$ depends on the gap between $p$ and $1/2$. The parameter $M \ge 1$ will be chosen to be sufficiently large in order to overcome an exponential-sized union bound, again depending on the parameters coming from \cref{prop:compressible}.
\end{remark}
\begin{proof}
This is identical to the deduction of \cite[Corollary~4.3]{Tik20} by applying \cref{thm:inversion-of-randomness} to $f(t):= 2^{-|t|/\sqrt{n}}/\iota$, where $t \in \mb{Z}$ and $\iota$ is a normalizing factor. We apply this deduction separately for all $m\in[pn-\gamma_{\ref{thm:inversion-of-randomness}}n,pn+\gamma_{\ref{thm:inversion-of-randomness}}n]$, and then conclude using a union bound.
\end{proof}

The proof of \cref{thm:inversion-of-randomness} requires a couple of preliminary anticoncentration statements for the slice, which we record here.

\begin{lemma}\label{lem:slkr}
Let $\sigma, \lambda \in (0,1/3)$ and $r > 0$. Let $A = \{a_1,\ldots,a_n\}$ be a set of real numbers for which there exist disjoint subsets $A_1, A_2 \subseteq A$ such that $|A_1|, |A_2| \ge \sigma n$ and such that $|a_i - a_j| \ge r$ for all $a_i \in A_1, a_j \in A_2$. Then, there exists $C_{\ref{lem:slkr}} = C_{\ref{lem:slkr}}(\lambda, \sigma)$ such that for all $m\in[\lambda n,(1-\lambda)n]$ we have
\[\mc{L}\bigg(\sum_{i=1}^{n}a_ib_i, r\bigg) \le\frac{C_{\ref{lem:slkr}}}{\sqrt{n}},\]
where $(b_1,\dots, b_n)$ is a random vector uniformly distributed on $\{0,1\}^{n}_m$.
\end{lemma}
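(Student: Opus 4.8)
The plan is to reduce this slice anticoncentration estimate to the independent-variable bound of \cref{lem:LKR} by conditioning away almost all of the randomness. Since there are at least $\sigma n$ indices $i$ with $a_i$ a value of $A_1$ and at least $\sigma n$ indices $j$ with $a_j$ a value of $A_2$, and any such pair satisfies $|a_i - a_j|\ge r$, I can choose $\lceil\sigma n\rceil$ pairwise-disjoint index pairs $(i_k,j_k)$ with $|a_{i_k}-a_{j_k}|\ge r$ for every $k$; let $T=\bigcup_k\{i_k,j_k\}$. Fix $z\in\mb{R}$; I will bound $\mb{P}[\,|\sum_i a_ib_i - z|\le r\,]$ uniformly in $z$ and take a supremum at the end.

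First I would condition on the $\sigma$-algebra $\mc{F}$ generated by the coordinates $(b_\ell)_{\ell\notin T}$ together with the values $b_{i_k}+b_{j_k}\in\{0,1,2\}$ for each $k$. A direct counting argument (all completions of a given $\mc{F}$-pattern lie in $\{0,1\}^n_m$, are distinct, and are equally likely) shows that, conditionally on $\mc{F}$, the orientations of the \emph{mixed} pairs $S:=\{k:\,b_{i_k}+b_{j_k}=1\}$ are mutually independent and uniform over their two values, while the rest of $\sum_i a_ib_i$ is $\mc{F}$-measurable. Thus on $\mc{F}$ we may write $\sum_i a_ib_i = C_{\mc{F}} + \sum_{k\in S}\xi_k$ with $C_{\mc{F}}$ $\mc{F}$-measurable and the $\xi_k$ ($k\in S$) conditionally independent, $\xi_k$ uniform on the $r$-separated pair $\{a_{i_k},a_{j_k}\}$. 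Applying \cref{lem:LKR} conditionally with $r_k=r/4$ for $k\in S$ (so $\mc{L}(\xi_k,r/4)=1/2$, since an interval of length $r/2$ cannot contain both points of an $r$-separated pair) gives $\mb{P}[\,|\sum_i a_ib_i - z|\le r \mid \mc{F}\,] \le C|S|^{-1/2}$ for an absolute constant $C$.

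It remains to show $|S|\ge c_0 n$ with probability $1-e^{-\Omega_{\lambda,\sigma}(n)}$ for some $c_0=c_0(\lambda,\sigma)>0$; this is the one place the hypothesis $m\in[\lambda n,(1-\lambda)n]$ enters, and is the main technical point. The number of ones among the $2\lceil\sigma n\rceil$ coordinates in $T$ is hypergeometric with mean $2\lceil\sigma n\rceil\cdot(m/n)\in[2\sigma\lambda n(1-o(1)),\,2\sigma(1-\lambda)n(1+o(1))]$, so by a standard hypergeometric tail bound it lies in $[c'n,\,(2\sigma-c')n]$ for some $c'=c'(\lambda,\sigma)>0$ off an event of probability $e^{-\Omega_{\lambda,\sigma}(n)}$. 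Conditioning further on this count $Q$, the set of one-positions in $T$ is a uniform $Q$-subset of the $2\lceil\sigma n\rceil$ slots, so $|S|=\sum_k\mbm{1}[k\text{ mixed}]$ is a sum of negatively associated indicators with $\mb{E}|S|=\frac{Q(2\lceil\sigma n\rceil-Q)}{2\lceil\sigma n\rceil-1}\gtrsim_{\lambda,\sigma} n$; a Chernoff/McDiarmid bound for sampling without replacement then gives $|S|\ge\mb{E}|S|/2\ge c_0 n$ off an event of probability $e^{-\Omega_{\lambda,\sigma}(n)}$.

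Combining the two pieces, for $n$ above a threshold $n_0(\lambda,\sigma)$ we obtain $\mb{P}[\,|\sum_i a_ib_i - z|\le r\,] \le e^{-\Omega_{\lambda,\sigma}(n)} + C(c_0 n)^{-1/2} \le C_{\ref{lem:slkr}}(\lambda,\sigma)/\sqrt{n}$, and taking the supremum over $z$ settles the case $n\ge n_0$; for $n<n_0$ the claim is trivial after enlarging $C_{\ref{lem:slkr}}$ so that $C_{\ref{lem:slkr}}/\sqrt n\ge 1\ge\mc{L}(\cdot,r)$. I expect the fiddly identification of the conditional law of the mixed-pair orientations and the quantitative lower bound on $|S|$ to require the most care; the reduction to \cref{lem:LKR} is otherwise routine.
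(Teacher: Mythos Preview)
Your proposal is correct and follows essentially the same approach as the paper: pair up indices from $A_1$ and $A_2$, observe that conditioning on the pair sums (equivalently, the paper's swap-rerandomization via auxiliary $\on{Ber}(1/2)$ coins $b_j'$) makes the orientations of the mixed pairs independent and uniform, show via a large deviation estimate that linearly many pairs are mixed, and then apply \cref{lem:LKR}. The paper compresses your hypergeometric/negative-association analysis of $|S|$ into a one-line appeal to a ``standard large deviation estimate,'' but the two arguments are the same in substance.
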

\begin{proof}By reindexing the coordinates of $A$, we may assume that for $i \in [\sigma n]$, $a_{2i - 1} \in A_1$ and $a_{2i} \in A_2$. In particular, for $i \in [\sigma n]$, we have $|a_{2i} - a_{2i - 1}| \ge r$.  Note that $\sum_{i=1}^{n} a_i b_i$
has the same distribution as
\[\sum_{i>2\sigma n}a_i b_i + \sum_{j\le \sigma n}\bigg(a_{2j-1}b_{2j-1} + a_{2j}b_{2j} + b_j'(b_{2j} - b_{2j-1})(a_{2j-1} - a_{2j})\bigg),\]
where $b_1',\dots,b_{\sigma n}'$ are i.i.d.~$\on{Ber}(1/2)$ random variables. Next, note that by a standard large deviation estimate, we have
\begin{align}
\label{eq:rerandomize}
\mb{P}[|\{j \in [\sigma n]: b_{2j-1} + b_{2j} = 1\}| \le c(\sigma, \lambda)n] \le \exp(-c(\sigma, \lambda)n),
\end{align}
where $c(\sigma, \lambda) > 0$ is a constant depending only on $\sigma$ and $\lambda$. On the other hand, on the complement of this event, we may conclude by applying \cref{lem:LKR} to \cref{eq:rerandomize}, using only the randomness in $b_1',\dots, b_{\sigma n}'$.
\end{proof}

Using the previous lemma along with the definition of an admissible set allows us to obtain the following. The proof is essentially identical to \cite[Lemma~4.4]{Tik20} and we omit details. 
\begin{lemma}\label{lem:slice-levy-kolmogorov-rogozin}
Fix $\lambda\in(0,1/3)$ and $\delta_0\in(0,1/4)$. Let $\mc{A}$ be $(N,n,K_1,K_2,K_3,\delta)$-admissible for some integer parameters $N,n$ and real parameters $\delta\in[\delta_0,1/4)$, $K_3 > K_2 > K_1 > 1$. Suppose that $n > n_{\ref{lem:slice-levy-kolmogorov-rogozin}}(\lambda,\delta_0,K_1,K_2,K_3)$, $\ell \ge \delta_0n$, and $s \in [\lambda \cdot \ell , (1-\lambda)\cdot\ell]$. Then, for any integer interval $J$ of length at least $N$,
\[\sum_{t\in J}f_{\mc{A},s,\ell}(t)\le\frac{C_{\ref{lem:slice-levy-kolmogorov-rogozin}}(\lambda,\delta_0,K_1,K_2)|J|}{N\sqrt{n}}.\]
\end{lemma}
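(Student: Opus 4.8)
The plan is to reduce \cref{lem:slice-levy-kolmogorov-rogozin} to \cref{lem:slkr} by exploiting the product structure of an admissible set. Fix $\ell \ge \delta_0 n$ and $s \in [\lambda \ell, (1-\lambda)\ell]$, and fix an integer interval $J$ with $|J| \ge N$. By definition,
\[
\sum_{t \in J} f_{\mc{A},s,\ell}(t) = \frac{1}{\binom{\ell}{s}} \sum_{v \in \{0,1\}_s^\ell} \sum_{t \in J} f(t + v_1 X_1 + \cdots + v_\ell X_\ell).
\]
Since $\snorm{f}_1 = 1$ and $f \ge 0$, the inner sum $\sum_{t \in J} f(t + c)$ over an interval of length $|J|$ is at most $1$; but that only gives the trivial bound $\sum_{t\in J} f_{\mc{A},s,\ell}(t) \le 1$, which is not what we want. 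The point is instead to interpret the average over $v$ and the sum over $t$ together as a single L\'evy-type concentration statement: writing $b = v$ for the random $0/1$ vector uniform on $\{0,1\}_s^\ell$, we have
\[
\sum_{t \in J} f_{\mc{A},s,\ell}(t) = \mb{E}_v \sum_{t \in J} f\bigg(t + \sum_{j \le \ell} v_j X_j\bigg) \le \sup_{w \in \ell_1(\mb{Z}), \snorm{w}_1 \le 1, w \ge 0}\ \mb{E}_v \sum_{t\in J} w\bigg(t + \sum_{j\le\ell} v_j X_j\bigg),
\]
and since $|J| \ge N$, the right side is at most $\lceil |J|/N\rceil$ times $\sup_z \mb{P}[\sum_{j\le\ell} v_j X_j \in z + [0,N)]$, i.e.\ (up to the harmless factor $\lceil |J|/N\rceil \le 2|J|/N$) it is controlled by the L\'evy concentration function $\mc{L}(\sum_{j\le\ell} v_j X_j, N)$ of the random variable $\sum_{j\le\ell} v_j X_j$ evaluated at window $N$, where $X_1,\dots,X_\ell$ are \emph{fixed} (these are the coordinates determining $\mc{A}$) and $v$ is uniform on the slice $\{0,1\}_s^\ell$.

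So it remains to show $\mc{L}(\sum_{j\le\ell} v_j X_j, N) \le C/\sqrt{n}$. Here is where admissibility enters: among the coordinates $j \le \ell$, by the structure conditions (either (P1)--(P2) or (Q1)--(Q2)), there are $\Omega_{\delta_0}(n)$ pairs of indices whose $X_j$-values are forced to be $N$-separated from each other — in the (P) case, the even-indexed $X_{2i}$ lie in $[-K_1N, K_1N]$ while the odd-indexed $X_{2i-1}$ lie outside $[-K_2N, K_2N]$, so any such even/odd pair differs by at least $(K_2 - K_1)N \ge N$ (after replacing $K_2$ by $K_1 + 1$ if necessary, or absorbing the constant); in the (Q) case, $X_{2i} \in [K_1 N, K_2 N]$ is positive and $X_{2i-1} \in [-K_2N, -K_1N]$ is negative, so they differ by at least $2K_1 N \ge N$. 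Moreover, because $\ell \ge \delta_0 n$ and $s \in [\lambda\ell, (1-\lambda)\ell]$, the slice constraint $\sum_{j\le\ell} v_j = s$ is genuinely in the "bulk," so \cref{lem:slkr} applies: take $A = \{X_1,\dots,X_\ell\}$ (as a multiset — one should pass to indices rather than values to avoid collision issues, but the proof of \cref{lem:slkr} via reindexing and the auxiliary Bernoulli rerandomization is insensitive to this), with $A_1$ the set of "small/negative-block" coordinates among the first $2\delta_0 n$ pairs and $A_2$ the "large/positive-block" coordinates, each of size $\ge \sigma \ell$ for $\sigma = \sigma(\delta_0) > 0$, and $r = N$. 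Since $\ell \ge \delta_0 n$, \cref{lem:slkr} yields $\mc{L}(\sum_{j\le\ell} v_j X_j, N) \le C_{\ref{lem:slkr}}(\lambda, \sigma)/\sqrt{\ell} \le C/\sqrt{n}$ with $C = C(\lambda, \delta_0, K_1, K_2)$. Combining with the reduction above gives $\sum_{t\in J} f_{\mc{A},s,\ell}(t) \le 2|J|/N \cdot C/\sqrt{n} = C_{\ref{lem:slice-levy-kolmogorov-rogozin}} |J|/(N\sqrt{n})$, as claimed.

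The main obstacle is bookkeeping rather than conceptual: one must be careful that the separated pairs guaranteed by admissibility among the coordinates indexed by $i \le \delta n$ all lie within the first $\ell$ coordinates — which holds because $\ell \ge \delta_0 n$ and we only need $\sigma \ell$ of them, so taking the $\min(\delta_0, \delta) n$ available pairs suffices — and that applying \cref{lem:slkr} to a multiset of real values (with possible repeats, and with the slice parameter $s/\ell$ bounded away from $0$ and $1$) is legitimate; this is handled exactly as in the proof of \cref{lem:slkr} by working with coordinate indices and the rerandomization in \cref{eq:rerandomize}. A secondary point is the passage from summing $f$ over the interval $J$ to a clean L\'evy concentration bound: one tiles $J$ by $\le \lceil |J|/N\rceil$ intervals of length $N$ and uses $\snorm{f}_1 = 1$ together with the definition of $\mc{L}(\cdot, N)$ on each piece. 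Since \cref{lem:slkr} already does the real work, the whole argument is short, which is why the paper says the proof is "essentially identical to \cite[Lemma~4.4]{Tik20}" and omits the details.
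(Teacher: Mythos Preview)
Your argument is correct and is essentially the approach the paper intends: reduce to a L\'evy concentration bound on the slice by tiling $J$ into length-$N$ pieces and using $\snorm{f}_1=1$, then invoke \cref{lem:slkr} on the first $\ell$ coordinates, where admissibility (either (P1)--(P2) or (Q1)--(Q2)) deterministically supplies $\Omega_{\delta_0}(\ell)$ many $N$-separated pairs among $X_1,\dots,X_\ell$. The only cosmetic point is that \cref{lem:slkr} gives $\mc{L}(\cdot,r)$ at the separation scale $r=(K_2-K_1)N$ or $r=2K_1N$, not directly at $N$; converting costs a factor depending on $K_1,K_2$, which is harmlessly absorbed into $C_{\ref{lem:slice-levy-kolmogorov-rogozin}}(\lambda,\delta_0,K_1,K_2)$ as you indicate.
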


\subsection{Preprocessing the function}

As a critical first step, we prove a version of \cref{thm:inversion-of-randomness} in which $L_{\ref{thm:inversion-of-randomness}}$ depends on $M$. Later, using \cref{prop:slice-linfty-decrement}, we will show how to rectify this defect. Note that for the proposition below, we do not require the log-Lipschitz assumption. This initial estimate plays an analogous role to \cite[Proposition~4.5]{Tik20}, but our situation is more delicate due to the dependencies induced by working on the slice, and the need to extract terms equivalent to the entropy of the slice.
\begin{proposition}\label{prop:rough-Linfty}
For $0 < \delta < 1/4$, $K_3 > K_2 > K_1 > 1$, $p\in(0,1/2]$, $\epsilon\ll p$, and a given parameter $M\ge 1$, there is $\gamma_{\ref{prop:rough-Linfty}} = \gamma_{\ref{prop:rough-Linfty}}(p,\epsilon,\delta,K_1,K_2,K_3)\in(0,\epsilon)$ independent of $M$ and there are $L_{\ref{prop:rough-Linfty}} = L_{\ref{prop:rough-Linfty}}(p,\epsilon,\delta,K_1,K_2,K_3,M) > 0$ and $n_{\ref{prop:rough-Linfty}} = n_{\ref{prop:rough-Linfty}}(p,\epsilon,\delta,K_1,K_2,K_3,M)\ge 1$ such that the following holds. 

Let $n\ge n_{\ref{prop:rough-Linfty}}$, $1\le N\le\binom{n}{pn}\exp(-\epsilon n)$, and $\mc{A}$ be $(N,n,K_1,K_2,K_3,\delta)$-admissible. Let $f$ be a nonnegative function in $\ell_1(\mb{Z})$ with $\snorm{f}_1 = 1$. Then, for all $s\in[p\ell-\gamma_{\ref{prop:rough-Linfty}}\ell,p\ell+\gamma_{\ref{prop:rough-Linfty}}\ell]$ and $\ell\in[(1-\gamma_{\ref{prop:rough-Linfty}})n,n]$, we have
\[\mb{P}\bigg[\snorm{f_{\mc{A},s,\ell}}_\infty\ge L_{\ref{prop:rough-Linfty}}(N\sqrt{n})^{-1}\bigg]\le\exp(-Mn).\]
\end{proposition}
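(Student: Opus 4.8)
The plan is to fix $\ell \in [(1-\gamma)n, n]$ and $s \in [p\ell - \gamma\ell, p\ell + \gamma\ell]$ (with $\gamma = \gamma_{\ref{prop:rough-Linfty}}$ to be chosen), and to bound, for each $t_0 \in \mb{Z}$, the probability that $f_{\mc{A},s,\ell}(t_0) \ge L(N\sqrt n)^{-1}$, then take a union bound over the relevant range of $t_0$. Since $f \ge 0$, $\snorm{f}_1 = 1$, and all the $X_j$ with $|X_j| \le nN$ live in $[-n^2 N, n^2 N]$, the value $f_{\mc{A},s,\ell}(t_0)$ is nonzero only for $t_0$ in an interval of length $O(n^2 N)$, so the union bound costs only a factor $\mathrm{poly}(n)\cdot N \le \exp(o(n))\cdot\binom{n}{pn}$; this will be absorbed since we are proving a super-exponential bound $\exp(-Mn)$ at the cost of letting $L$ depend on $M$. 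The core of the argument is therefore the pointwise statement: for a fixed $t_0$,
\[
\mb{P}\bigl[f_{\mc{A},s,\ell}(t_0) \ge L(N\sqrt n)^{-1}\bigr] \le \exp(-M'n),
\]
where $M' = M + O(1)$ accounts for the union bound and the $N \le \binom{n}{pn}e^{-\epsilon n}$ slack.

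Here is how I would get the pointwise bound. Expanding the definition,
\[
f_{\mc{A},s,\ell}(t_0) = \binom{\ell}{s}^{-1}\sum_{v \in \{0,1\}_s^\ell} f(t_0 + v_1 X_1 + \cdots + v_\ell X_\ell).
\]
Take expectations over $\mc A$: by Fubini, $\mb{E}_{\mc A}[f_{\mc{A},s,\ell}(t_0)]$ equals $\mb{E}[f(t_0 + \sum_{j\le\ell} v_j X_j)]$ where $v$ is a uniform point of $\{0,1\}_s^\ell$ independent of the $X_j$, and each $X_j$ is uniform on $A_j$ independently. Now observe that $\sum_{j\le\ell} v_j X_j = \sum_{j : v_j = 1} X_j$, a sum of at least $s \ge p\ell - \gamma\ell \ge (p-2\gamma)n$ independent variables, and among the indices $j \le 2\delta n$ the admissibility structure (P1)--(P2) or (Q1)--(Q2) guarantees — after conditioning on which of the paired coordinates $v$ selects, which by a Chernoff bound happens in a "spread" way with probability $1 - \exp(-\Omega(n))$ — a linear number of summands whose supports are genuine integer intervals of length $\ge 2N+1$ that are mutually $\Omega(N)$-separated in the sense needed by \cref{lem:slkr}. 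Hence, conditionally on a good selection pattern, $\mc{L}(\sum v_j X_j, N) \le C/\sqrt n$, and since $\log_2 f$ need not be Lipschitz here we instead use $\snorm{f}_1 = 1$ together with this concentration bound: summing $f$ against the distribution of $\sum v_j X_j$ over windows of length $N$ gives $\mb{E}_{\mc A}[f_{\mc{A},s,\ell}(t_0)] \le C'(N\sqrt n)^{-1}$ — essentially \cref{lem:slice-levy-kolmogorov-rogozin} applied with $\ell' = \ell$ and the window $J = \{t_0\}$ fattened to length $N$, combined with the trivial bound $f_{\mc{A},s,\ell}(t_0) \le \binom{\ell}{s}^{-1}\sum_{t \in t_0 + [0,N)} (\text{local }\ell_1\text{ mass})\cdots$. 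The upshot is $\mb{E}_{\mc A}[f_{\mc{A},s,\ell}(t_0)] \lesssim (N\sqrt n)^{-1}$.

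The expectation bound alone is not enough; we need the super-exponential tail. For this I would apply Markov's inequality not to $f_{\mc{A},s,\ell}(t_0)$ itself but to a high power, or — more robustly — exploit that $f_{\mc{A},s,\ell}(t_0)$ is an average over the $\binom{\ell}{s}$ choices of $v$, and split this average according to a fixed partition of $[\ell]$ into $\Theta(1/\gamma)$ blocks each of size $\Theta(\gamma n)$, writing the event $\{f_{\mc{A},s,\ell}(t_0) \ge L(N\sqrt n)^{-1}\}$ as forcing, for a positive fraction of "block-profiles", an average of $f$ over the remaining randomness to be $\gtrsim L(N\sqrt n)^{-1}$. Conditioning on all but one block of coordinates, the inner average is again a slice-restricted sum of $\Theta(\gamma n)$ independent admissible coordinates, whose Lévy concentration is $O((N\sqrt{\gamma n})^{-1})$ by \cref{lem:slkr} applied within that block. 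Since the number of coordinates is only $\Theta(\gamma n)$, a single block's contribution can only exceed a large multiple of its mean with probability $\exp(-\Omega(\gamma n \log(1/\text{slack})))$; multiplying over $\Theta(1/\gamma)$ essentially independent blocks and taking $L$ large compared to $M$ (this is where $L$ must depend on $M$) yields the bound $\exp(-Mn)$. Matching the arithmetic here to extract exactly the entropy term $\binom{n}{pn}^{-1}$ that cancels the $N \le \binom{n}{pn}e^{-\epsilon n}$ hypothesis — i.e., making sure the per-block losses aggregate to beat both the union bound over $t_0$ and the factor $N$ — is the main obstacle, and is precisely the point where the dependence structure on the slice makes the argument more delicate than Tikhomirov's independent-coordinate version in \cite[Proposition~4.5]{Tik20}; I would handle it by choosing the block decomposition to respect the pairing in (P1)--(P2)/(Q1)--(Q2) and by first conditioning on the (exponentially likely) event that each block receives close to its expected number $\approx p\cdot(\text{block size})$ of selected coordinates, so that \cref{lem:slkr} applies inside every block simultaneously.
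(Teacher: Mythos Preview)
Your proposal has a genuine gap at the heart of the argument.

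First, a minor but real error: the claim that $f_{\mc{A},s,\ell}(t_0)$ is nonzero only for $t_0$ in an interval of length $O(n^2N)$ is false, since $f\in\ell_1(\mb{Z})$ may have infinite support. This is repairable (one can restrict to $t_0$ at bounded distance from the deterministic set $\{t:f(t)\ge (N\sqrt n)^{-1}\}$, which has size $\le N\sqrt n$), so I will not dwell on it.

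The serious problem is the ``block'' tail bound. You assert that, after splitting $[\ell]$ into $\Theta(1/\gamma)$ blocks of size $\Theta(\gamma n)$, ``a single block's contribution can only exceed a large multiple of its mean with probability $\exp(-\Omega(\gamma n\log(1/\text{slack})))$''. But $f_{\mc{A},s,\ell}(t_0)$ is an average over $v\in\{0,1\}_s^\ell$ of $f(t_0+\sum_j v_jX_j)$; it does not factor over blocks, and conditioning on $\{X_j:j\notin B_i\}$ leaves a function of $\{X_j:j\in B_i\}$ that is still a complicated average over the full slice. There is no mechanism here that produces a per-block tail of $\exp(-c|B_i|)$: a moment computation gives only $\mb{E}[f_{\mc{A},s,\ell}(t_0)^k]\le\mb{E}[f_{\mc{A},s,\ell}(t_0)]$ (using $\snorm{f}_\infty\le 1$), which is useless for $k>1$, and there is no product or martingale structure across blocks to exploit. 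You correctly identify that ``matching the arithmetic to extract exactly the entropy term $\binom{n}{pn}^{-1}$\ldots is the main obstacle'', but you do not solve it; respecting the (P)/(Q) pairing in the block decomposition does not by itself produce the needed cancellation.

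The paper's approach is genuinely different and supplies exactly the missing structure. Rather than revealing blocks of coordinates, it reveals $X_\ell,X_{\ell-1},\ldots,X_1$ one at a time along an \emph{averaging sequence}: starting from a point $t$ where $f_{\mc{A},s,\ell}$ is large, one uses the recursion $f_{\mc{A},s,\ell}(t)=(1-s/\ell)f_{\mc{A},s,\ell-1}(t)+(s/\ell)f_{\mc{A},s-1,\ell-1}(t+X_\ell)$ to choose a branch $w_\ell\in\{0,1\}$ along which the value does not decrease, and iterates. This produces a step record $(w_i)$ with $\sum w_i=s$ and a nondecreasing sequence $h_0\ge\cdots\ge h_\ell$. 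The key \emph{entropy lemma} (\cref{lem:entropy}) bounds the telescoping product $\prod_i h_i/h_{i-1}$ from above by $\binom{\ell}{s}^{-1}$ times controlled error factors, via the exact identity $\prod_{i=1}^\ell(1-\ol W_i)^{1-w_i}\ol W_i^{w_i}=\binom{\ell}{s}^{-1}$; this is precisely where the entropy $\binom{n}{pn}^{-1}$ cancels the hypothesis $N\le\binom{n}{pn}e^{-\epsilon n}$, and it forces at least $\gamma n$ of the steps to be simultaneously $\lambda$-robust and not $R$-drops. Then a sequential argument applies: conditional on $X_1,\ldots,X_{i-1}$ (and hence on $t_{i-1}$), \cref{lem:slice-levy-kolmogorov-rogozin} and Markov give $\mb{P}[\text{step }i\text{ is not an }R\text{-drop}]\le C/R$, and multiplying over $\gamma n/2$ such steps yields $(C/R)^{\gamma n/2}$. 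The union bound over step records, the set $I$ of bad steps, and starting points $t_0\in\mc{D}=\{t:f(t)\ge(N\sqrt n)^{-1}\}$ (with $|\mc{D}|\le N\sqrt n$) costs at most $8^n$, and taking $L$ (hence $R=\gamma L$) large depending on $M$ finishes. None of this is visible in a block decomposition; the one-coordinate-at-a-time revealing is what creates the martingale structure and makes the entropy extraction possible.
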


The starting point of the proof is the following trivial recursive relation:
\[f_{\mc{A},s,\ell}(t) = \bigg(1-\frac{s}{\ell}\bigg)f_{\mc{A},s,\ell-1}(t) + \frac{s}{\ell}f_{\mc{A},s-1,\ell-1}(t+X_\ell)\]
for all $1\le\ell\le n$ and $0\le s\le\ell$, noting that if $s\in\{0,\ell\}$, the one undefined term will have a coefficient of $0$. Note also that $f_{\mc{A},0,\ell} = f$.

\begin{definition}[Step record and averaging sequence]
\label{def:record-averaging}
Fix $f, \mc{A}, s, \ell$, a point $t \in \mb{Z}$, and a choice of $X = (X_1,\dots,X_n)$. For such a choice, we define the \emph{averaging sequence} $(t_i)_{i=0}^{\ell}$ and \emph{step record} $(w_i)_{i=1}^{\ell}$ as follows:
\begin{itemize}
    \item $t_\ell := t$,
    \item Since \[h_\ell := f_{\mc{A},s,\ell}(t_\ell) = \bigg(1-\frac{s}{\ell}\bigg)f_{\mc{A},s,\ell-1}(t_\ell) + \frac{s}{\ell}f_{\mc{A},s-1,\ell-1}(t_\ell+X_\ell),\]
    at least one of the two terms $f_{\mc{A}, s, \ell - 1}(t_\ell)$ and $f_{\mc{A}, s-1, \ell-1}(t_\ell + X_\ell)$ has a positive coefficient and is at least $h_\ell$. If it is the former, set $w_\ell = 0$, else set $w_\ell = 1$. 
    \item Set $t_{\ell - 1} := t_\ell + w_\ell X_\ell$, $h_{\ell-1} := f_{\mc{A}, s-w_\ell, \ell-1}(t_{\ell-1})$, and repeat with $t_{\ell-1},s-w_\ell,\ell-1$.
\end{itemize}
It will be convenient to denote
\begin{itemize}
    \item $W_i:= \sum_{j=1}^{i}w_j$ and $\ol{W}_i := W_i/i$ for all $i \in [\ell]$.  
\end{itemize}
\end{definition}

We record some immediate consequences of the above definition. 
\begin{itemize}
\item $W_\ell = s$.
\item $t_{i-1} = t_i + w_i X_i$ for all $i \in [\ell]$.
\item $f_{\mc{A}, W_i, i}(t_i) = (1-\ol{W}_i)f_{\mc{A}, W_i, i-1}(t_i) + \ol{W}_if_{\mc{A}, W_i - 1, i-1}(t_i +X_i)$.
\item $h_i = f_{\mc{A}, W_i, i}(t_i)$.
\item $f(t_0) = h_0 \ge h_1 \ge \cdots \ge h_{\ell} = f_{\mc{A}, s, \ell}(t)$.
\end{itemize}
\begin{definition}[Drops and robust steps]
\label{def:drop-robust}
With notation as above, and for $i \in [\ell]$:
\begin{itemize}
\item For $\lambda \in (0,1)$, we say that step $i$ is \emph{$\lambda$-robust} if 
\begin{align*}
    \ol{W}_i \in (\lambda, 1-\lambda).
\end{align*}
\item For $R > 0$, we say that there is an \emph{$R$-drop} at step $i$ if
\begin{align*}
    f_{\mc{A}, W_i - y, i-1}(t_{i-1} + z X_i) \le \frac{R}{N\sqrt{n}}
\end{align*}
for all $y \in \{0,1\}$ and $z \in \{-1, 1\}$.
\end{itemize}
\end{definition}

The next lemma, which is the key point of this subsection, shows that the event that $\snorm{f_{\mc{A},s,\ell}}_{\infty}$ is `large' is contained in the event that there is an averaging sequence with linearly many \emph{robust} steps which do not see an $R$-drop, and will enable us to prove \cref{prop:rough-Linfty} using a union bound argument.
\begin{lemma}\label{lem:entropy}
Let $\mc{A}, f, N, \epsilon, p$ be as in \cref{prop:rough-Linfty}, and let $L \ge 1$. Then, there exist $\lambda_{\ref{lem:entropy}} = \lambda_{\ref{lem:entropy}}(p,\epsilon) \in (0,1/3)$, $\gamma_{\ref{lem:entropy}} = \gamma_{\ref{lem:entropy}}(p,\epsilon) \in (0,1)$, and $n_{\ref{lem:entropy}} = n_{\ref{lem:entropy}}(p,\epsilon)$ for which the following holds.

Let $n \ge n_{\ref{lem:entropy}}$, $R = \gamma_{\ref{lem:entropy}}L$, and let  $\ell\in[(1-\gamma_{\ref{lem:entropy}})n,n]$ and $s\in[p\ell-\gamma_{\ref{lem:entropy}}\ell,p\ell+\gamma_{\ref{lem:entropy}}\ell]$. Then, for $(X_1,\dots,X_n) \in \mc{A}$,
\[\snorm{f_{\mc{A},s,\ell}}_\infty\ge L(N\sqrt{n})^{-1}\]
implies that there exists some $t\in\mb{Z}$ with $f_{\mc{A},s,\ell}(t) \ge L(N\sqrt{n})^{-1}$ so that its averaging sequence $(t_i)_{i=0}^{\ell}$ and step record $(w_i)_{i=1}^{\ell}$ satisfy
\[\#\{i\in[\ell]: \emph{step }i\emph{ is }\lambda_{\ref{lem:entropy}}\emph{-robust and is not an }R\emph{-drop}\}\ge\gamma_{\ref{lem:entropy}} n.\]
\end{lemma}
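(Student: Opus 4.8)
The plan is to follow the descent of the averaging sequence, but to measure its drop against an \emph{entropy budget} that the hypothesis $N\le\binom{n}{pn}\exp(-\epsilon n)$ renders positive. First I would pick any $t$ with $f_{\mc{A},s,\ell}(t)\ge L(N\sqrt n)^{-1}$ (one exists, as $f_{\mc{A},s,\ell}\in\ell_1(\mb{Z})$ so its supremum is attained), and form its averaging sequence $(t_i)$ and step record $(w_i)$. Since $f\ge 0$ and $\snorm{f}_1=1$ we have $h_0=f(t_0)\le 1$, while $h_\ell\ge L(N\sqrt n)^{-1}$ and $h_0\ge\cdots\ge h_\ell$. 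Setting $c_i:=1-\ol{W}_i$ if $w_i=0$ and $c_i:=\ol{W}_i$ if $w_i=1$, the step-$i$ recursion reads $h_i=c_ih_{i-1}+(1-c_i)g_i$, where $g_i\in[0,h_{i-1}]$ is the value of the \emph{smaller} of the two children (so $h_{i-1}$ is the larger), and — the only use of the $R$-drop definition — $g_i$ is always one of the four quantities $f_{\mc{A},W_i-y,i-1}(t_{i-1}+zX_i)$. Telescoping $c_i^{-1}=\binom{i}{W_i}\big/\binom{i-1}{W_{i-1}}$ yields the exact identity $\prod_{i=1}^\ell c_i=\binom{\ell}{s}^{-1}$.

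Next I would introduce the per-step excess $\beta_i:=\log_2\!\big(h_i/(c_ih_{i-1})\big)$, which lies in $[0,-\log_2 c_i]$ (lower bound since $h_i\ge c_ih_{i-1}$, upper since $g_i\le h_{i-1}$). Summing and using the identity,
\[\sum_{i=1}^\ell\beta_i=\log_2\frac{h_\ell}{h_0}+\log_2\binom{\ell}{s}\ \ge\ \log_2\binom{\ell}{s}-\log_2 N-\tfrac12\log_2 n\ \ge\ \tfrac{\epsilon}{2}n,\]
where the middle inequality uses $h_0\le 1$, $h_\ell\ge L(N\sqrt n)^{-1}$, $L\ge 1$, and $N\le\binom{n}{pn}\exp(-\epsilon n)$, and the last uses $\log_2\binom{\ell}{s}=\log_2\binom{n}{pn}-o(n)$ uniformly over the admissible ranges of $\ell,s$, valid once $\gamma_{\ref{lem:entropy}}$ is small in terms of $p,\epsilon$ (the Lipschitz constant of the binary entropy near $p$ enters here, producing the $p$-dependence of the final constants). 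This is where the entropic hypothesis on $N$ is consumed.

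Then I would bound the excess contributed by each ``bad'' type of step. (i) A non-robust step with $c_i\ge 1-\lambda_{\ref{lem:entropy}}$ has $\beta_i\le-\log_2 c_i=O(\lambda_{\ref{lem:entropy}})$, totalling $O(\lambda_{\ref{lem:entropy}}n)$. (ii) A non-robust step with $c_i<\lambda_{\ref{lem:entropy}}$ is, by the definitions, either the $j$-th $1$ of the record occurring at position $i_j>j/\lambda_{\ref{lem:entropy}}$, or symmetrically a $0$ at position past $k/\lambda_{\ref{lem:entropy}}$; then $-\log_2 c_i=\log_2(i_j/j)$ (resp.\ $\log_2(i_k'/k)$), and there are at most $\lambda_{\ref{lem:entropy}}\ell$ such indices of each kind. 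For any set $S$ of distinct indices, $\sum_{j\in S}\log_2(i_j/j)\le|S|\log_2\ell-\log_2(|S|!)\le|S|\big(1+\log_2(\ell/|S|)\big)$ (as $\{i_j\}_{j\in S}$ and $S$ are sets of distinct positive integers); since $x\mapsto x(1+\log_2(\ell/x))$ is increasing and $|S|\le\lambda_{\ref{lem:entropy}}\ell$, this is $O\big(\lambda_{\ref{lem:entropy}}n\log(1/\lambda_{\ref{lem:entropy}})\big)$, so type (ii) totals $O\big(\lambda_{\ref{lem:entropy}}n\log(1/\lambda_{\ref{lem:entropy}})\big)$. (iii) A robust $R$-drop step has $g_i\le R(N\sqrt n)^{-1}=\gamma_{\ref{lem:entropy}}L(N\sqrt n)^{-1}\le\gamma_{\ref{lem:entropy}}h_{i-1}$ (using $h_{i-1}\ge h_\ell\ge L(N\sqrt n)^{-1}$), so $\beta_i\le\log_2\!\big(1+(1-c_i)\gamma_{\ref{lem:entropy}}/c_i\big)=O(\gamma_{\ref{lem:entropy}}/\lambda_{\ref{lem:entropy}})$ (as $c_i>\lambda_{\ref{lem:entropy}}$), totalling $O(\gamma_{\ref{lem:entropy}}n/\lambda_{\ref{lem:entropy}})$. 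Finally a robust \emph{non}-$R$-drop step has $\beta_i\le-\log_2 c_i\le\log_2(1/\lambda_{\ref{lem:entropy}})$. Choosing $\lambda_{\ref{lem:entropy}}=\lambda_{\ref{lem:entropy}}(p,\epsilon)$ so that the totals in (i) and (ii) are each $\le\epsilon n/8$, then $\gamma_{\ref{lem:entropy}}=\gamma_{\ref{lem:entropy}}(p,\epsilon)$ (e.g.\ $\gamma_{\ref{lem:entropy}}=\epsilon\lambda_{\ref{lem:entropy}}/8$) so that (iii) is $\le\epsilon n/8$, and comparing with $\sum_i\beta_i\ge\epsilon n/2$ forces
\[\#\{i:\text{step }i\text{ is }\lambda_{\ref{lem:entropy}}\text{-robust and not an }R\text{-drop}\}\ \ge\ \frac{\epsilon n}{4\log_2(1/\lambda_{\ref{lem:entropy}})}\ \ge\ \gamma_{\ref{lem:entropy}}n,\]
after shrinking $\gamma_{\ref{lem:entropy}}$ further if necessary.

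The main obstacle is part (ii): because a non-robust step can have $c_i$ as small as $1/i$, naively a linear number of such steps could each carry $\sim\log_2 n$ units of excess and swamp the $\Theta(\epsilon n)$ budget. The resolution is that the small-$c_i$ non-robust steps are exactly a short prefix of the $1$'s (resp.\ $0$'s) of the step record, so the distinctness estimate $\sum_{j\in S}\log_2(i_j/j)\le O(|S|\log_2(\ell/|S|))$ applies and, for $|S|\le\lambda_{\ref{lem:entropy}}\ell$, is genuinely $O(\lambda_{\ref{lem:entropy}}n\log(1/\lambda_{\ref{lem:entropy}}))$ rather than $O(\lambda_{\ref{lem:entropy}}n\log n)$. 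A secondary essential point — the one flagged in the remark preceding \cref{prop:rough-Linfty} — is that one must measure the descent by $\log_2\!\big(h_i/(c_ih_{i-1})\big)$ rather than by $\log_2(h_i/h_{i-1})$ as in \cite{Tik20}: the identity $\prod c_i=\binom{\ell}{s}^{-1}$ then subtracts off exactly the entropy of the slice, which is what permits the threshold $N\le\binom{n}{pn}\exp(-\epsilon n)$.
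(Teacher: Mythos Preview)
Your proposal is correct and follows essentially the same argument as the paper. Both proofs telescope through the averaging sequence, invoke the identity $\prod_{i=1}^\ell c_i=\binom{\ell}{s}^{-1}$ to subtract off the slice entropy, split the steps into the same four types (non-robust with $c_i$ near $1$, non-robust with $c_i$ near $0$, robust $R$-drops, robust non-$R$-drops), and bound the rare small-$c_i$ non-robust steps via the same distinctness argument giving $O(\lambda n\log(1/\lambda))$. The only difference is cosmetic: you work additively with the per-step excess $\beta_i=\log_2(h_i/(c_ih_{i-1}))$, whereas the paper keeps the telescoping multiplicative and manipulates the product $\prod_{i\in I\setminus J}(h_i/h_{i-1})$ directly; the resulting inequalities are line-by-line equivalent after taking logarithms.
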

\begin{proof}
Fix $(X_1,\dots,X_n) \in \mc{A}$ such that $\snorm{f_{\mc{A}, s, \ell}}_{\infty} \ge L(N\sqrt{n})^{-1}$. Then, there must exist some $t \in \mb{Z}$ such that $f_{\mc{A},s,\ell}(t) \ge L(N\sqrt{n})^{-1}$. We will show that the conclusion of the lemma is satisfied for this $t$, for suitable choice of $\gamma_{\ref{lem:entropy}}, \lambda_{\ref{lem:entropy}}$. Below, we will freely use the notation and relations in \cref{def:drop-robust,def:record-averaging}. Let $(t_i)_{i=0}^{\ell}$ and $(w_i)_{i=1}^{\ell}$ denote, respectively, the averaging sequence and step record of $t$. 
Note that
\[L(N\sqrt{n})^{-1}\le f_{\mc{A},s,\ell}(t) = h_0\prod_{i=1}^\ell\frac{h_i}{h_{i-1}} \le h_{\ell-1} \le \cdots \le h_0.\]

We begin by controlling the ratios $h_i/h_{i-1}$ at steps $i$ which are $R$-drops. Hence, suppose that step $i$ is an $R$-drop. If $w_i = 0$, then $W_i = W_{i-1}$ and $t_i = t_{i-1}$, so that
\begin{align*}
    \frac{h_i}{h_{i-1}} 
    &= (1-\ol{W}_i)\frac{f_{\mc{A}, W_i, i-1}(t_i)}{f_{\mc{A}, W_{i-1}, i-1}(t_{i-1})} + \ol{W}_i \frac{f_{\mc{A}, W_i-1, i-1}(t_i + X_i)}{f_{\mc{A}, W_{i-1}, i-1}(t_{i-1})}\\
    &=  (1-\ol{W}_i)\frac{f_{\mc{A}, W_i, i-1}(t_i)}{f_{\mc{A}, W_i, i-1}(t_i)} + \ol{W}_i \frac{f_{\mc{A}, W_i-1, i-1}(t_{i-1} + X_i)}{h_{i-1}}\\
    &\le (1-\ol{W}_i) + \ol{W}_i \frac{R(N\sqrt{n})^{-1}}{L(N\sqrt{n})^{-1}}\\
    & = 1-\ol{W}_i + \ol{W}_i \gamma_{\ref{lem:entropy}}. 
\end{align*}
Combining this with a similar computation for the case when $w_i = 1$, we see that if step $i$ is an $R$-drop, then
\begin{align}
\label{eq:ratio-bound}
    \frac{h_i}{h_{i-1}} \le \bigg( 1-\ol{W}_i + \ol{W}_i \gamma_{\ref{lem:entropy}}\bigg)^{1-w_i}\bigg( \ol{W}_i + (1-\ol{W}_i) \gamma_{\ref{lem:entropy}}\bigg)^{w_i}.
\end{align}

Note that if step $i$ is $\lambda_{\ref{lem:entropy}}$-robust, each of the terms in the product on the right-hand side is at least $\lambda_{\ref{lem:entropy}}$. Since at least one of the two terms is $1$, it follows that the product is at least $\lambda_{\ref{lem:entropy}}$. Therefore, for any step $i$ which is $\lambda_{\ref{lem:entropy}}$-robust, we have
\begin{align}
\label{eq:robust-step-bound}
    \lambda_{\ref{lem:entropy}}
    \le \bigg( 1-\ol{W}_i + \ol{W}_i \gamma_{\ref{lem:entropy}}\bigg)^{1-w_i}\bigg( \ol{W}_i + (1-\ol{W}_i) \gamma_{\ref{lem:entropy}}\bigg)^{w_i} \le 
    \bigg(1 - \ol{W}_i\bigg)^{1-w_i}\bigg(\ol{W}_i\bigg)^{w_i}\bigg(1+\frac{\gamma_{\ref{lem:entropy}}}{\lambda_{\ref{lem:entropy}}}\bigg),
\end{align}
where the final inequality uses $\max\{\ol{W}_i/(1-\ol{W}_i), (1-\ol{W}_i)/\ol{W}_i\} \le 1/\lambda_{\ref{lem:entropy}}$ at any $\lambda_{\ref{lem:entropy}}$-robust step $i$. 

Now, let $I \subseteq [\ell]$ denote the steps $i$ which are $\lambda_{\ref{lem:entropy}}$-robust, and let $J \subseteq I$ denote the steps $i$ which are \emph{not} $R$-drops (so that $I \setminus J$ is the set of $\lambda_{\ref{lem:entropy}}$-robust $R$-drops). Our goal is to provide a lower bound on $|J|$.  

Since $h_i\ge h_{i-1}$ and $h_0\le\snorm{f}_\infty\le\snorm{f}_1 = 1$, we have
\begin{align}
\label{eq:telescoping-bound}
    L(N\sqrt{n})^{-1} &\le \prod_{i \in I \setminus J}\frac{h_i}{h_{i-1}} \le \prod_{i \in I\setminus J}\bigg( 1-\ol{W}_i + \ol{W}_i \gamma_{\ref{lem:entropy}}\bigg)^{1-w_i}\bigg( \ol{W}_i + (1-\ol{W}_i) \gamma_{\ref{lem:entropy}}\bigg)^{w_i} \nonumber \\
    &= \frac{\prod_{i \in I}\bigg( 1-\ol{W}_i + \ol{W}_i \gamma_{\ref{lem:entropy}}\bigg)^{1-w_i}\bigg( \ol{W}_i + (1-\ol{W}_i) \gamma_{\ref{lem:entropy}}\bigg)^{w_i}}{\prod_{i \in J}\bigg( 1-\ol{W}_i + \ol{W}_i \gamma_{\ref{lem:entropy}}\bigg)^{1-w_i}\bigg( \ol{W}_i + (1-\ol{W}_i) \gamma_{\ref{lem:entropy}}\bigg)^{w_i}} \nonumber \\
    &\le \frac{(1+\gamma_{\ref{lem:entropy}}/\lambda_{\ref{lem:entropy}})^{|I|}\prod_{i\in I}(1-\ol{W}_i)^{1-w_i}\ol{W}_i^{w_i}}{\lambda_{\ref{lem:entropy}}^{|J|}} \nonumber \\
    &= (1+\gamma_{\ref{lem:entropy}}/\lambda_{\ref{lem:entropy}})^{|I|}\lambda_{\ref{lem:entropy}}^{-|J|}\prod_{i\in[\ell]}(1-\ol{W}_i)^{1-w_i}\ol{W}_i^{w_i} \prod_{i\in[\ell]\setminus I}(1-\ol{W}_i)^{-(1-w_i)}\ol{W}_i^{-w_i} \nonumber \\
    & = (1+\gamma_{\ref{lem:entropy}}/\lambda_{\ref{lem:entropy}})^{|I|} \cdot \lambda_{\ref{lem:entropy}}^{-|J|} \cdot \binom{\ell}{s}^{-1} \cdot \prod_{i\in[\ell]\setminus I}(1-\ol{W}_i)^{-(1-w_i)}\ol{W}_i^{-w_i};
\end{align}
here, the first line uses $h_i/h_{i-1} \le 1$ and \cref{eq:ratio-bound}, the third line uses \cref{eq:robust-step-bound}, and the last line uses the identity
\[\prod_{i\in[\ell]}(1-\ol{W}_i)^{1-w_i}\ol{W}_i^{w_i} = \binom{\ell}{s}^{-1},\]
noting that both sides are equal to the probability that a uniformly random sample from $\{0,1\}^{\ell}_{s}$ returns $(w_1,\dots,w_\ell)$.

Note that the first and the third terms in the final product in \cref{eq:telescoping-bound} are easy to suitably control (by taking $\gamma_{\ref{lem:entropy}}$ and $\lambda_{\ref{lem:entropy}}$ to be sufficiently small). As we will see next, these parameters also allow us to make the last term at most $\exp(c\epsilon n)$ for any constant $c > 0$. 

Let $K \subseteq [\ell] \setminus I$ denote those indices $i$ such that either $\ol{W}_i \le \lambda_{\ref{lem:entropy}}$ and $w_i = 0$, or $\ol{W}_i \ge (1-\lambda_{\ref{lem:entropy}})$ and $w_i  = 1$. Then, 
\begin{align}
\label{eq:product-K}
\prod_{i\in K}(1-\ol{W}_i)^{-(1-w_i)}\ol{W}_i^{-w_i} \le (1-\lambda_{\ref{lem:entropy}})^{-|K|}.
\end{align}
It remains to bound 
\[\prod_{i\in [\ell]\setminus (I\cup K)}(1-\ol{W}_i)^{-(1-w_i)}\ol{W}_i^{-w_i}.\]
Note that for every $i \in [\ell]\setminus (I\cup K)$, we either have $\ol{W}_i \le \lambda_{\ref{lem:entropy}}$ and $w_i = 1$, or $\ol{W}_i \ge 1-\lambda_{\ref{lem:entropy}}$ and $w_i = 0$.  
The following is the key point: let $i_1,\dots, i_k \in [\ell]\setminus (I\cup K)$ be all indices satisfying the first condition. Then, for all $j \in [k]$, we have
\begin{align*}
    j = w_{i_1}+\dots+w_{i_j}\le W_{i_j} \le \lambda_{\ref{lem:entropy}}\ell. 
\end{align*}
Hence, 
\begin{align*}
    k \le \lambda_{\ref{lem:entropy}}\ell \quad \text{ and } \quad \ol{W}_{i_j}^{-1} \le i_j/j\le \ell/j.
\end{align*}
Combining this with a similar calculation for indices in $[\ell]\setminus K$ satisfying the second condition, we have
\begin{align}
    \label{eq:K-complement-bound}
 \prod_{i\in [\ell]\setminus (I\cup K)}(1-\ol{W}_i)^{-(1-w_i)}\ol{W}_i^{-w_i} \le \bigg(\prod_{j=1}^{\lceil\lambda_{\ref{lem:entropy}}\ell\rceil}\frac{\ell}{j}\bigg)^2 \le \bigg(\frac{e}{\lambda_{\ref{lem:entropy}}}\bigg)^{4\lambda_{\ref{lem:entropy}}\ell}.   
\end{align}
Substituting \cref{eq:product-K} and \cref{eq:K-complement-bound} in \cref{eq:telescoping-bound}, we have
\begin{align}
\label{eq:J-bound}
    Ln^{-1/2}\binom{n}{pn}^{-1}\exp(\epsilon n) \le \lambda_{\ref{lem:entropy}}^{-|J|}\cdot \bigg(1+\frac{\gamma_{\ref{lem:entropy}}}{\lambda_{\ref{lem:entropy}}}\bigg)^{\ell}\cdot \binom{\ell}{s}^{-1}\cdot (1-\lambda_{\ref{lem:entropy}})^{-\ell}\cdot \bigg(\frac{e}{\lambda_{\ref{lem:entropy}}}\bigg)^{4\lambda_{\ref{lem:entropy}}\ell}.
\end{align}

We will first choose $\lambda_{\ref{lem:entropy}}$, and then choose some  $\gamma_{\ref{lem:entropy}} < \lambda_{\ref{lem:entropy}}^{2}$. Note that, enforcing the constraint $\gamma_{\ref{lem:entropy}} < \lambda_{\ref{lem:entropy}}^{2}$, we can choose $\lambda_{\ref{lem:entropy}}$ sufficiently small depending on $\epsilon$ and $p$ so that the second term, the fourth term, and the fifth term in the product in \cref{eq:J-bound} are each bounded above by $\exp(\epsilon n/10)$ and so that the third term is bounded above by $\binom{n}{pn}^{-1}\exp(\epsilon n/10)$. Hence, we can choose $\lambda_{\ref{lem:entropy}}$ depending on $\epsilon$ and $p$ such that
$$n^{-1/2}\exp(\epsilon n/2) \le \lambda_{\ref{lem:entropy}}^{-|J|}.$$
Now, for all $n$ sufficiently large depending on $\epsilon$, we can find $\gamma_{\ref{lem:entropy}}$ sufficiently small depending on $\epsilon, \lambda_{\ref{lem:entropy}}$ such that 
$|J| \ge \gamma_{\ref{lem:entropy}}n$. This completes the proof.
\end{proof}

We can now prove \cref{prop:rough-Linfty}.
\begin{proof}[Proof of \cref{prop:rough-Linfty}]
We use \cref{lem:entropy} along with a union bound. For controlling individual events in the union, we will use the following. Consider a step record $(w_i)_{i=1}^{\ell}$. Then, if step $i$ is $\lambda_{\ref{lem:entropy}}$-robust with respect to $(w_i)_{i=1}^{\ell}$ and if $i > \delta_0 n$, then for any $t \in \mb{Z}$, $y \in \{0,1\}$ and $z \in \{-1,1\}$, by \cref{lem:slice-levy-kolmogorov-rogozin}, we have 
\begin{align*}
\mb{E}[f_{\mc{A},W_i-y,i-1}(t+zX_i)|X_1,\ldots,X_{i-1}] &= \frac{1}{|A_i|}\sum_{\tau\in t+z A_i}f_{\mc{A},W_i - y,i-1}(\tau)\\
&\le\frac{2C_{\ref{lem:slice-levy-kolmogorov-rogozin}}(\lambda_{\ref{lem:entropy}}/2,\delta_0 ,K_1,K_2)}{N\sqrt{n}}.
\end{align*}
Here, we have used $i-1 \ge \delta_0 n$, $W_i - y \in [\lambda_{\ref{lem:entropy}}(i-1)/2, (1-\lambda_{\ref{lem:entropy}}/2)(i-1)]$ and that $A_i$ is a union of at most $2$ integer intervals of length at least $N$. Now, consider $t \in \mb{Z}$ with averaging sequence $(t_i)_{i=0}^{\ell}$ and step record $(w_i)_{i=1}^{\ell}$. Note that, given the `starting point' $t_0$ of the averaging sequence, the points $t_1,\dots, t_{i-1}$ are determined by $X_1,\dots,X_{i-1}$. In particular, the event that step $i$ is not an $R$-drop is determined by $t_0, X_1,\dots, X_i, w_1,\dots,w_i$. Therefore, by Markov's inequality, we see that for any $\lambda_{\ref{lem:entropy}}$-robust step $i$ with $i > \delta_0n$, given the step record $(w_i)_{i=1}^\ell$ and the starting point $t_0$ of the averaging sequence $(t_i)_{i=0}^\ell$,
\begin{align}
\label{eq:R-drop-bound}
    \mb{P}[ \text{step } i \text{ is not an }R\text{-drop} | X_1,\dots, X_{i-1}] \le \frac{8C_{\ref{lem:slice-levy-kolmogorov-rogozin}}(\lambda_{\ref{lem:entropy}}/2,\delta_0 ,K_1,K_2)}{R}. 
\end{align}
From here on, the proof closely follows the proof of \cite[Proposition~4.5]{Tik20}. Fix parameters as given in the proposition statement. Let $\lambda_{\ref{lem:entropy}} = \lambda_{\ref{lem:entropy}}(p,\epsilon)$. 
We choose $\gamma_{\ref{prop:rough-Linfty}} = \gamma_{\ref{lem:entropy}}(p,\epsilon)$. Further, we set  $R = \gamma_{\ref{lem:entropy}}L$, where $L \ge 1$ will be chosen later.  

Let $\mc{E}_L$ denote the event that  $\snorm{f_{\mc{A},s,\ell}}_\infty\ge L(N\sqrt{n})^{-1}$. For $(X_1,\dots,X_n) \in \mc{E}_L$, by \cref{lem:entropy}, there exists $t \in \mb{Z}$ with $f_{\mc{A},s,\ell}(t) \ge L(N\sqrt{n})^{-1}$ with averaging sequence $(t_i)_{i=0}^{\ell}$ and step record $(w_i)_{i=1}^{\ell}$ such that
\[\#\{i\in[\ell]: \text{step }i\text{ is }\lambda_{\ref{lem:entropy}}\text{-robust and is not an }R\text{-drop}\}\ge\gamma_{\ref{lem:entropy}} n.\]
We call such an averaging sequence $(t_i)_{i=0}^{\ell}$ a \emph{witnessing sequence}. 

Taking a union bound over the choice of the step record is not costly, and note that given $(X_1,\dots,X_n)$ and the step record, the witnessing  sequence is completely determined by its starting point $t_0$. Furthermore, the definition of the witnessing sequence and the definition of $f_{\mc{A},s,\ell}$ easily show that $f(t_0)\ge f_{\mc{A},s,\ell}(t)\ge L(N\sqrt{n})^{-1}$ so
\[t_0\in\{\tau\in\mb{Z}: f(\tau)\ge (N\sqrt{n})^{-1}\} =: \mc{D}.\]
Note that $\mc{D}$ is a deterministic set depending only on $f$. Further, since $\snorm{f}_1 = 1$ we see that
\[|\mc{D}|\le N\sqrt{n}.\]
To summarize, we have shown that if $(X_1,\dots,X_n) \in \mc{E}_L$, then there exists a witnessing sequence $(t_i)_{i=0}^{\ell}$ with step record $(w_i)_{i=1}^{\ell}$ such that $t_0 \in\mc{D}$, and such that
\[\#\{i\in[\ell]: \text{step }i\text{ is }\lambda_{\ref{lem:entropy}}\text{-robust and is not an }R\text{-drop}\}\ge\gamma_{\ref{lem:entropy}} n.\]
Therefore, by the union bound, it follows that
\begin{align*}
    \mb{P}[\mc{E}_L] 
    &\le 8^n\sup_{\substack{I\subseteq[\ell], |I|=\lceil\gamma_{\ref{lem:entropy}}n\rceil\\t_0\in\mc{D}, (w_i)_{i=1}^\ell\in\{0,1\}^\ell_s}}\mb{P}[\text{The witnessing sequence starts at }t_0\text{, has step record }(w_i)_{i=1}^\ell\text{, and}\\
&\qquad\qquad\qquad\qquad\qquad\qquad\text{every }i\in I\text{ is }\lambda_{\ref{lem:entropy}}\text{-robust and is not an }R\text{-drop}].
\end{align*}

From \cref{eq:R-drop-bound}, taking $\delta_0 = \gamma_{\ref{lem:entropy}}/2$, it follows that the probability appearing on the right hand side above is bounded by
\[\bigg(\frac{8C_{\ref{lem:slice-levy-kolmogorov-rogozin}}(\lambda_{\ref{lem:entropy}}/2,\gamma_{\ref{lem:entropy}}/2,K_1,K_2)}{\gamma_{\ref{lem:entropy}}L}\bigg)^{\gamma_{\ref{lem:entropy}}n/2},\]
since there are at least  $\gamma_{\ref{lem:entropy}}n/2$ values of $i \in I$ with $i > \delta_0 n$ and since $R = \gamma_{\ref{lem:entropy}}L$ by definition. Therefore, taking $L$ and $n$ sufficiently large depending on $M$ and the parameters appearing above gives the desired conclusion.
\end{proof}

\subsection{Taming the preprocessed function}
In order to remove the dependence of $L_{\ref{prop:rough-Linfty}}$ on $M$, we will impose a log-Lipschitz condition on $f$. We refer the reader to \cite{Tik20} for an illuminating discussion of how such a regularity assumption is helpful. Given this log-Lipschitz assumption, \cite{Tik20} corrects the $\ell^{\infty}$ norm of a  preprocessed function (as in \cref{prop:rough-Linfty}) by showing that for any $\epsilon > 0$, an additional $\epsilon n$ amount of randomness (for $n$ sufficiently large) suffices to decrease the $\ell^{\infty}$-norm by a constant factor with superexponential probability. The way that this is accomplished is via a two-step process. First, one shows using an $\ell^{2}$-decrement argument that, after using an $\epsilon n/2$ amount of randomness, the density of points which achieve values within an absolute constant factor of the preprocessed $\ell^{\infty}$-norm is sufficiently small (depending on the log-Lipschitz constant of $f$). At this point, one can use an argument similar to \cref{prop:rough-Linfty} (in the independent case), replacing the application of the analogue of \cref{lem:slice-levy-kolmogorov-rogozin} with an estimate based on the `rarity of spikes'. The precise statement proved in \cite{Tik20} is \cref{prop:linfty-decrement}.

The main ingredient of this section is \cref{prop:slice-linfty-decrement}, which is a slice analogue of \cref{prop:linfty-decrement}. Remarkably, in order to prove \cref{prop:slice-linfty-decrement}, we will only need the statement of \cref{prop:linfty-decrement} as a black-box.

\begin{proposition}\label{prop:slice-linfty-decrement}
For any $p\in(0,1)$, $\epsilon\in(0,1)$, $\wt{R}\ge 1$, $L_0\ge 16\wt{R}$, and $M\ge 1$, there is $\gamma_{\ref{prop:slice-linfty-decrement}} = \gamma_{\ref{prop:slice-linfty-decrement}}(p) > 0$ and there are $n_{\ref{prop:slice-linfty-decrement}} = n_{\ref{prop:slice-linfty-decrement}}(p,\epsilon,L_0,\wt{R},M) > 0$ and $\eta_{\ref{prop:slice-linfty-decrement}} = \eta_{\ref{prop:slice-linfty-decrement}}(p,\epsilon,L_0,\wt{R},M)\in(0,1)$ with the following property. Let $L_0\ge L\ge 16\wt{R}$, let $n\ge n_{\ref{prop:slice-linfty-decrement}}$, $N\le 2^n$, and let $g\in\ell_1(\mb{Z})$ be a nonnegative function satisfying
\begin{itemize}
    \item $\snorm{g}_1 = 1$,
    \item $\log_2 g$ is $\eta_{\ref{prop:slice-linfty-decrement}}$-Lipschitz,
    \item $\sum_{t\in I}g(t)\le\wt{R}/\sqrt{n}$ for any integer interval $I$ of size $N$, and
    \item $\snorm{g}_\infty\le L/(N\sqrt{n})$.
\end{itemize}
For each $i\le 2\lfloor\epsilon n\rfloor$, let $Y_i$ be a random variable uniform on some disjoint union of integer intervals of cardinality at least $N$ each, and assume that $Y_1,\ldots,Y_{2\lfloor\epsilon n\rfloor}$ are mutually independent. Define a random function $\wt{g}\in\ell_1(\mb{Z})$ by
\[\wt{g}(t) = \mb{E}_b\bigg[g\bigg(t+\sum_{i=1}^{2\lfloor\epsilon n\rfloor}b_iY_i\bigg)\bigg|\sum_{i=1}^{2\lfloor\epsilon n\rfloor}b_i = s\bigg],\]
where $b_1,\dots, b_{2\lfloor \epsilon n \rfloor}$ are independent $\on{Ber}(p)$ random variables and  $s/(2\lfloor\epsilon n\rfloor)\in[p-\gamma_{\ref{prop:slice-linfty-decrement}},p+\gamma_{\ref{prop:slice-linfty-decrement}}]$. Then
\[\mb{P}\bigg[\snorm{\wt{g}}_\infty > \frac{19L/20}{N\sqrt{n}}\bigg]\le\exp(-Mn).\]
\end{proposition}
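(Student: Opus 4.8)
The plan is to reduce \cref{prop:slice-linfty-decrement} to its independent-model analogue \cref{prop:linfty-decrement} by averaging over the (conditioned) number of $Y_i$'s that get selected. Writing $b=(b_1,\dots,b_{2\lfloor\epsilon n\rfloor})$ for the $\on{Ber}(p)$ vector, the conditioning $\sum_i b_i = s$ with $s/(2\lfloor\epsilon n\rfloor)\in[p-\gamma_{\ref{prop:slice-linfty-decrement}},p+\gamma_{\ref{prop:slice-linfty-decrement}}]$ means $\wt g$ is a uniform average of $g$ translated by a random subset of exactly $s$ of the $Y_i$'s. The key idea: split the index set $[2\lfloor\epsilon n\rfloor]$ into two blocks of size $\lfloor\epsilon n\rfloor$ each, and observe that a uniformly random $s$-subset can be generated by first choosing how many elements $s_1$ land in the first block (and $s_2=s-s_1$ in the second), where $s_1$ is hypergeometric, and then choosing uniform $s_1$- and $s_2$-subsets of the two blocks independently. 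So $\wt g$ is a mixture over $s_1$ of functions of the form $(g_{s_2}^{(2)})_{s_1}^{(1)}$, i.e.\ first smooth $g$ using a uniform $s_2$-subset of block two to get an intermediate function $g'$, then smooth $g'$ using a uniform $s_1$-subset of block one.

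First I would record that, by standard concentration of the hypergeometric distribution, with probability $1-\exp(-\Omega(n))$ over the splitting we have $s_1/\lfloor\epsilon n\rfloor, s_2/\lfloor\epsilon n\rfloor\in[p-\gamma',p+\gamma']$ for a slightly larger $\gamma'$ still within the tolerance required to invoke \cref{prop:linfty-decrement}; this contributes an acceptable $\exp(-\Omega(n))$ to the final probability bound. Next, I would check that the intermediate function $g'$ (smoothing $g$ by a uniform $s_2$-subset of $\lfloor\epsilon n\rfloor$ of the $Y_i$'s) still satisfies the four hypotheses needed to feed into the independent-case decrement: $\snorm{g'}_1=1$ is immediate since $g'$ is an average of translates of $g$; the $\eta$-log-Lipschitz property is preserved under averaging translates of a log-Lipschitz function because a convex combination of functions each within a factor $2^{\eta}$ over an interval is again within that factor (this is the one genuinely model-specific point — log-concavity of the averaging is not needed, just that $2^{\eta}$-multiplicative oscillation is preserved by positive combinations); the interval-mass bound $\sum_{t\in I}g'(t)\le\wt R/\sqrt n$ is preserved since each translate of $g$ satisfies it; and $\snorm{g'}_\infty\le L/(N\sqrt n)$ is likewise preserved.

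With $g'$ verified as an admissible input, I would apply \cref{prop:linfty-decrement} — which is precisely the statement that smoothing a function with these four properties by $\lfloor\epsilon n\rfloor$ independent $\on{Ber}(p)$-selected uniform-interval-union variables (here: uniform $s_1$-subset of block one; note a uniform $s_1$-subset is not literally i.i.d.\ Bernoulli selection, but one can condition the independent Bernoulli model on its sum being $s_1$, or more simply note \cref{prop:linfty-decrement} as invoked here should be applied in the conditioned form and the paper's \cref{prop:rough-Linfty}-style arguments already handle exactly this) — to conclude that $\snorm{\wt g}_\infty\le (19L/20)/(N\sqrt n)$ except with probability $\exp(-Mn)$, for $n$ large and $\eta$ small depending on the stated parameters. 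Combining the hypergeometric-concentration error with the conditional decrement bound, and noting all error terms are $\exp(-\Omega(n))$ with the implied constant made larger than $M$ by choosing $n_{\ref{prop:slice-linfty-decrement}}$ large, completes the argument.

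The main obstacle I anticipate is the mismatch between ``uniform $s$-subset'' and ``independent $\on{Ber}(p)$'' selection: \cref{prop:linfty-decrement} is stated for independent selection, so one must either (i) carefully run the slice-conditioned version of the decrement argument directly — essentially re-deriving \cref{prop:linfty-decrement} in the conditioned setting, mirroring how \cref{prop:rough-Linfty} adapts \cite[Proposition~4.5]{Tik20} — or (ii) relate the conditioned-Bernoulli law on a block to the uniform-subset law and transfer the bound with an acceptable loss, using that conditioning on a typical value of the sum costs only a polynomial factor. Option (i) is cleaner and is the route suggested by the paper's claim that only the \emph{statement} of \cref{prop:linfty-decrement} is needed as a black box, so the crux is to set up the block-splitting so that what remains after the first smoothing is genuinely an independent-selection problem (the first block's subset is chosen \emph{after} fixing its size $s_1$, hence uniform, but one applies the black box with the roles arranged so the relevant randomness is the i.i.d.\ interval variables $Y_i$ and the Bernoulli-type selection is handled by the conditioning present in the definition of $\wt g$). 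Getting this bookkeeping exactly right, and confirming the log-Lipschitz and interval-mass hypotheses survive the intermediate step, is where the care is required; the rest is concentration estimates and parameter chasing.
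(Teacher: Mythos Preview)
Your block-splitting strategy does not actually reduce the slice problem to an independent-Bernoulli problem. After you condition on $(s_1,s_2)$, the smoothing on block one is still by a \emph{uniform $s_1$-subset}, which is exactly a slice-conditioned selection; so when you reach the point of invoking \cref{prop:linfty-decrement} you are feeding it a conditioned selection rule, not the i.i.d.\ $\on{Ber}(p)$ selection that the black box requires. Your fix (i) amounts to re-proving \cref{prop:linfty-decrement} on the slice, which is the statement we are trying to establish; your fix (ii) does not work either, because writing the i.i.d.\ average as a mixture $\sum_{s_1}\mb{P}[\text{sum}=s_1]\,\wt g_{s_1}$ goes the wrong way: a large value of $\snorm{\wt g_{s_1}}_\infty$ for some typical $s_1$ does not force a large value of $\snorm{\wt g_{\text{ind}}}_\infty$, since spikes at different $t$ for different $s_1$ can average out.

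The paper's proof uses a genuinely different device to manufacture independent randomness while remaining on the slice. It pairs the indices as $(2i-1,2i)$ and observes that the uniform distribution on $\{0,1\}^{2\lfloor\epsilon n\rfloor}_s$ is invariant under swapping within any pair; equivalently, one may introduce independent $\on{Ber}(1/2)$ swap variables $b_i'$ so that
\[
\wt g(t)=\mb{E}_{b,b'}\Big[g\Big(t+\sum_i\big(b_{2i-1}Y_{2i-1}+b_{2i}Y_{2i}+b_i'(b_{2i}-b_{2i-1})(Y_{2i-1}-Y_{2i})\big)\Big)\,\Big|\,\textstyle\sum b_i=s\Big].
\]
Restricting to $b$ in the (typical) set $B_0$ with $\Omega_p(\epsilon n)$ pairs satisfying $b_{2i-1}=1$, $b_{2i}=0$, on those pairs the inner expression becomes $Y_{2i-1}+b_i'(Y_{2i}-Y_{2i-1})$: this is precisely an i.i.d.\ $\on{Ber}(1/2)$ selection of the shifted variables $Y_{2i}-Y_{2i-1}$, which (conditioned on $Y_{2i-1}$) are again uniform on a union of intervals of length $\ge N$. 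A reverse-Markov step shows that if $\snorm{\wt g}_\infty>19L/(20N\sqrt n)$ then for some $b\in B_0$ the $b'$-average at the maximizing $t$ exceeds $9L/(10N\sqrt n)$, and now \cref{prop:linfty-decrement} with $p=1/2$ applies directly; a union bound over $|B_0|\le 2^n$ finishes. The pair-swap trick is the missing idea in your plan.
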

\begin{remark}
Note that $\gamma_{\ref{prop:slice-linfty-decrement}}$ depends only on $p$. 
\end{remark}

As mentioned above, we will crucially use the following $\ell^\infty$ decrement result for the independent model, proved in \cite{Tik20}.
\begin{proposition}[{\cite[Proposition~4.10]{Tik20}}]\label{prop:linfty-decrement}
For any $p\in(0,1/2]$, $\epsilon\in(0,1)$, $\wt{R}\ge 1$, $L_0\ge 16\wt{R}$, and $M\ge 1$ there are $n_{\ref{prop:linfty-decrement}} = n_{\ref{prop:linfty-decrement}}(p,\epsilon,L_0,\wt{R},M) > 0$ and $\eta_{\ref{prop:linfty-decrement}} = \eta_{\ref{prop:linfty-decrement}}(p,\epsilon,L_0,\wt{R},M)\in(0,1)$ with the following property. Let $L_0\ge L\ge 16\wt{R}$, let $n\ge n_{\ref{prop:linfty-decrement}}$, $N\le 2^n$, and let $g\in\ell_1(\mb{Z})$ be a nonnegative function satisfying
\begin{itemize}
    \item $\snorm{g}_1 = 1$,
    \item $\log_2 g$ is $\eta_{\ref{prop:linfty-decrement}}$-Lipschitz,
    \item $\sum_{t\in I}g(t)\le\wt{R}/\sqrt{n}$ for any integer interval $I$ of size $N$, and
    \item $\snorm{g}_\infty\le L/(N\sqrt{n})$.
\end{itemize}
For each $i\le\lfloor\epsilon n\rfloor$, let $Y_i$ be a random variable uniform on some disjoint union of integer intervals of cardinality at least $N$ each, and assume that $Y_1,\ldots,Y_{\lfloor\epsilon n\rfloor}$ are mutually independent. Define a random function $\wt{g}\in\ell_1(\mb{Z})$ by
\[\wt{g}(t) = \mb{E}_bg\bigg(t+\sum_{i=1}^{\lfloor\epsilon n\rfloor}b_iY_i\bigg)\]
where $b$ is a vector of independent $\on{Ber}(p)$ components. Then
\[\mb{P}\bigg[\snorm{\wt{g}}_\infty > \frac{(1-p(1-1/\sqrt{2}))L}{N\sqrt{n}}\bigg]\le\exp(-Mn).\]
\end{proposition}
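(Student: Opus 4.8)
This is \cite[Proposition~4.10]{Tik20}; we outline the argument. Write $k := \lfloor\epsilon n\rfloor$, fix $k_1 := \lfloor k/2\rfloor$, and set $g_1 := \mb{E}_b\,g\big(\cdot+\sum_{i=1}^{k_1}b_iY_i\big)$, a random function of $Y_1,\dots,Y_{k_1}$. Averaging over a random integer shift preserves $\snorm{\cdot}_1$, preserves the interval-mass bound $\sum_{t\in I}(\cdot)\le\wt{R}/\sqrt n$ over windows of length $N$, preserves the $\log_2$-Lipschitz constant, and does not increase $\snorm{\cdot}_\infty$; so $g_1$ again satisfies all four hypotheses imposed on $g$, with the same $\wt{R}$, $L$, and $\eta$. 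Since $\wt{g} = \mb{E}_b\,g_1\big(\cdot+\sum_{i=k_1+1}^{k}b_iY_i\big)$ with the second batch $Y_{k_1+1},\dots,Y_k$ independent of $g_1$, the plan is: in \textbf{Stage~1}, use the first batch to upgrade the (deterministic) interval-mass bound for $g_1$ into a \emph{few-spikes} estimate of arbitrarily small density, failing with probability $\le\exp(-Mn)$; and in \textbf{Stage~2}, conditioning on a good realization of $g_1$, run an averaging-sequence/union-bound argument over the second batch, structurally identical to the proof of \cref{prop:rough-Linfty}, to force $\snorm{\wt{g}}_\infty\le(1-p(1-1/\sqrt2))L/(N\sqrt n)$.

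\textbf{Stage~1 ($\ell^2$-decrement), the main obstacle.} Fix a constant $\beta\in(0,1)$ and $\alpha>0$, to be chosen small in terms of $M,\epsilon,p$ at the end. The goal is
\[\mb{P}_{Y_1,\dots,Y_{k_1}}\Big[\exists\text{ an interval }I\text{ with }|I|=N:\ \#\{t\in I:\ g_1(t)>\beta L/(N\sqrt n)\}>\alpha N\Big]\le\exp(-Mn).\]
The interval-mass bound alone yields only the \emph{constant} density bound $\wt{R}/(\beta L)\le 1/(16\beta)$; to reach an arbitrarily small density one runs an $\ell^2$-energy decrement along the first batch. Track $\snorm{g_j}_2^2$ for $g_j := \mb{E}_b\,g\big(\cdot+\sum_{i\le j}b_iY_i\big)$ as $j$ runs from $0$ to $k_1$: each step convolves by the law of $b_jY_j$, which is $\nu_j$ (uniform on a disjoint union of integer intervals of length $\ge N$) with probability $p$ and the point mass at $0$ with probability $1-p$; this contracts the portion of the $\ell^2$-energy carried at frequencies $\gtrsim 1/N$ by a factor bounded away from $1$, while the interval-mass (equivalently log-Lipschitz) regularity of $g_j$ forbids its $\ell^2$-energy from concentrating at the low frequencies $\lesssim 1/N$ on which no contraction occurs. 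Hence $\mb{E}\,\snorm{g_{k_1}}_2^2$ decays geometrically in $k_1\asymp\epsilon n$; a bounded-differences (Azuma--McDiarmid) inequality — resampling a single $Y_j$ perturbs $\snorm{g_j}_2^2$ by a controlled amount — promotes this to a superexponential tail, and small $\ell^2$-mass together with the log-Lipschitz regularity of $g_1$ yields the displayed few-spikes bound. Making the $\ell^2$-decrement quantitative when $Y_j$ is only uniform on a \emph{union} of intervals (so that the Fourier transform of $\nu_j$ need not decay monotonically away from $0$), ruling out low-frequency concentration via the regularity hypotheses, and obtaining superexponential concentration are the most delicate parts of the proof, and they force how small $\eta_{\ref{prop:linfty-decrement}}$ and how large $n_{\ref{prop:linfty-decrement}}$ must be, since $\alpha$ must ultimately defeat an exponential union bound in Stage~2.

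\textbf{Stage~2 (rarity of spikes).} Condition on $Y_1,\dots,Y_{k_1}$ so that the Stage~1 conclusion holds: $g_1$ is now a fixed function with $\snorm{g_1}_\infty\le L/(N\sqrt n)$ and at most $\alpha N$ spikes per length-$N$ window. For $\wt{g} = \mb{E}_b\,g_1\big(\cdot+\sum_{i=k_1+1}^k b_iY_i\big)$, build, exactly as in \cref{def:record-averaging}, the greedy averaging sequence by peeling the coordinates $i=k,k-1,\dots,k_1+1$: at step $i$ one has $h_i=(1-p)g_1^{(i-1)}(t_i)+p\,g_1^{(i-1)}(t_i+Y_i)$, where $g_1^{(\ell)}$ denotes the partial average using coordinates $k_1+1,\dots,\ell$, and one records $w_i\in\{0,1\}$ for the larger branch. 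Declare step $i$ a \emph{drop} if the smaller branch is at most $1/\sqrt2$ times the larger; for $p\le 1/2$ one checks directly that a drop forces $h_i/h_{i-1}\le 1-p(1-1/\sqrt2)$, so — since $h_{k_1}=g_1(t_{k_1})\le L/(N\sqrt n)$ and all ratios $h_i/h_{i-1}$ are $\le 1$ — a \emph{single} drop at \emph{any} step certifies $\wt{g}(t)\le(1-p(1-1/\sqrt2))L/(N\sqrt n)$. Hence $\snorm{\wt{g}}_\infty$ exceeding the target forces the existence of a starting point $t_0$ (necessarily in the deterministic set $\mc{D}:=\{t:\ g_1(t)>(1-p(1-1/\sqrt2))/(N\sqrt n)\}$, of size $O(N\sqrt n)$) and a step record $(w_i)$ with \emph{no} drop at any step; but at a non-drop step both $t_i$ and $t_i+Y_i$ are points where the (deterministic, interval-mass-inheriting) partial average $g_1^{(i-1)}$ exceeds a fixed constant multiple of $1/(N\sqrt n)$ — and since the Stage~1 sparsity of $g_1$ propagates through the $g_1^{(i-1)}$ (whose defining shift, with overwhelming $b$-probability, is spread over a scale $\gg N$) and each $Y_i$ is uniform on integer intervals of length $\ge N$, this event has conditional probability $O(\alpha)$. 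Union-bounding over the $\le 2^{O(n)}$ choices of $t_0\in\mc{D}$ and of the step record $(w_i)$, exactly as in the proof of \cref{prop:rough-Linfty}, bounds the whole probability by $\big(O(\alpha)\big)^{\Omega(\epsilon n)}\cdot 2^{O(n)}\le\exp(-Mn)$ once $\alpha$ is chosen small enough — and this is precisely the choice that propagates back to the smallness of $\eta_{\ref{prop:linfty-decrement}}$. Stage~2 is thus a direct transcription of the machinery already developed for \cref{prop:rough-Linfty}, with the Kolmogorov--L\'evy--Rogozin input (\cref{lem:LKR}) replaced by the rarity-of-spikes estimate.
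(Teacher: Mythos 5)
The paper does not prove this statement at all: \cref{prop:linfty-decrement} is imported verbatim as \cite[Proposition~4.10]{Tik20} and used as a black box (indeed, the surrounding text emphasizes that only the \emph{statement} is needed). So there is no in-paper proof to compare against; the relevant benchmark is Tikhomirov's argument, whose two-stage structure ($\ell^2$-decrement to obtain a low density of spikes, then a rarity-of-spikes averaging argument) your outline correctly reproduces, consistently with the paper's own prose description preceding \cref{prop:slice-linfty-decrement}. Your Stage~2 is essentially right and complete in spirit: the computation that a single drop forces $h_i/h_{i-1}\le 1-p(1-1/\sqrt2)$ for $p\le 1/2$ checks out (the case $b>a$ uses $p+(1-p)/\sqrt2\le 1-p(1-1/\sqrt2)$ precisely when $p\le 1/2$), and the union bound over $t_0\in\mc{D}$ and step records mirrors \cref{prop:rough-Linfty}, with the spike-density bound $O(\alpha)$ replacing \cref{lem:slice-levy-kolmogorov-rogozin}; the one point you gesture at but do not establish is that the Stage~1 sparsity of $g_1$ is inherited by the intermediate partial averages $g_1^{(i-1)}$, which in \cite{Tik20} requires a separately maintained ``spread'' property.

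The genuine gap is Stage~1, which you describe but do not prove, and which is where all the difficulty of the proposition lives. Two specific concerns: (i) your Fourier framing (contraction of energy at frequencies $\gtrsim 1/N$, non-concentration at low frequencies) is not obviously the right mechanism when $Y_j$ is uniform on a \emph{union} of intervals; Tikhomirov's actual decrement is a physical-space identity, expanding $\snorm{\mb{E}_b g(\cdot+bY_j)}_2^2=(1-2p(1-p))\snorm{g}_2^2+2p(1-p)\sang{g,g(\cdot+Y_j)}$ and bounding $\mb{E}_{Y_j}\sang{g,g(\cdot+Y_j)}$ directly via the hypothesis that $g$ has mass at most $\wt{R}/\sqrt{n}$ on every length-$N$ window, which sidesteps the low-frequency issue entirely; (ii) the claim that Azuma--McDiarmid upgrades the expected geometric decay of $\snorm{g_{k_1}}_2^2$ to a tail of size $\exp(-Mn)$ is not automatic, since the bounded-differences increments must be shown to be small relative to the quantity being concentrated (the monotonicity $\snorm{g_j}_2\le\snorm{g_{j-1}}_2$ and a per-step conditional contraction estimate, combined multiplicatively, is the cleaner route). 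As written, your proposal is an accurate roadmap of \cite{Tik20} rather than a self-contained proof; since the paper itself only cites the result, that is arguably the appropriate level of detail, but the Stage~1 decrement and its superexponential concentration would need to be carried out in full for this to stand alone.
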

\begin{remark}
We will only need this proposition for $p=1/2$. 
\end{remark}

\begin{proof}[Proof of \cref{prop:slice-linfty-decrement}]
As in the proof of \cref{lem:slkr}, we can use an equivalent method of sampling from the $s$-slice to rewrite $\wt{g}(t)$ as
\begin{align*}
\wt{g}(t) &= \mb{E}_b\bigg[g\bigg(t+\sum_{i=1}^{\lfloor\epsilon n\rfloor}(b_{2i-1}Y_{2i-1}+b_{2i}Y_{2i})\bigg)\bigg|\sum_{i=1}^{2\lfloor\epsilon n\rfloor}b_i = s\bigg]\\
&= \mb{E}_{b,b'}\bigg[g\bigg(t+\sum_{i=1}^{\lfloor\epsilon n\rfloor}(b_{2i-1}Y_{2i-1}+b_{2i}Y_{2i} + b_i'(b_{2i}-b_{2i-1})(Y_{2i-1}-Y_{2i})\bigg)\bigg|\sum_{i=1}^{2\lfloor\epsilon n\rfloor}b_i = s\bigg],
\end{align*}
where $b'$ is an $\lfloor \epsilon n \rfloor$-dimensional vector with \emph{independent} $\on{Ber}(1/2)$ components. Below, we will fix $b$ and use only the randomness in $b'$. In order to do this, let
\[B_0 := \bigg\{b_1,\dots, b_{2\lfloor \epsilon n \rfloor} : \#\{i : b_{2i-1}=1,b_{2i}=0\}\ge p(1-p)\epsilon n/4 \bigg\}.\]
Then, provided that $\gamma_{\ref{prop:slice-linfty-decrement}}$ is chosen sufficiently small depending on $p$, and $n$ is sufficiently large depending on $p$ and $\epsilon$, we have 
\[\mb{E}_b\bigg[1_{B_0} \bigg|  \sum_{i=1}^{2\lfloor \epsilon n \rfloor }b_i = s\bigg] > \frac{1}{2}.\]
Let $\mc{E}_L$ denote the event (depending on $Y_1,\dots, Y_{2\lfloor \epsilon n \rfloor}$) that $\snorm{\wt{g}}_\infty > 19L/(20N\sqrt{n})$. Now, suppose $Y_1,\dots, Y_{2\lfloor \epsilon n \rfloor} \in \mc{E}_L$, and suppose further that $\snorm{\wt{g}}_{\infty}$ is attained at $t \in \mb{Z}$.
Let 
\[B_{1} := \bigg\{b_1,\dots, b_{2\lfloor \epsilon n \rfloor}: \mb{E}_{b'}\bigg[g\bigg(t+\sum_{i=1}^{\lfloor\epsilon n\rfloor}(b_{2i-1}Y_{2i-1}+b_{2i}Y_{2i} + b_i'(b_{2i}-b_{2i-1})(Y_{2i-1}-Y_{2i})\bigg)\bigg|b\bigg]\ge\frac{9L/10}{N\sqrt{n}}\bigg\}.\]
Since $\snorm{g}_\infty\le L/(N\sqrt{n})$, it follows from the reverse Markov inequality that  
\[\mb{E}_b\bigg[1_{B_1} \bigg| \sum_{i=1}^{2\lfloor \epsilon n \rfloor }b_i = s\bigg] > \frac{1}{2}.\]
Thus, we see that for every $(Y_1,\dots, Y_{2\lfloor \epsilon n \rfloor}) \in \mc{E}_L$, there exists some $b \in B_0 \cap B_1$. Hence, taking a union bound, we see that
\begin{align}
\label{eq:union-B_0}
\mb{P}&\bigg[\snorm{\wt{g}}_\infty > \frac{19L/20}{N\sqrt{n}}\bigg] \le \mb{P}[\exists b \in B_0 : b\in B_1]
 \nonumber \\
&\le|B_0|\sup_{b\in B_0}\mb{P}\bigg[\exists t: \mb{E}_{b'}\bigg[g\bigg(t+\sum_{i=1}^{\lfloor\epsilon n\rfloor}(b_{2i-1}Y_{2i-1}+b_{2i}Y_{2i} + b_i'(b_{2i}-b_{2i-1})(Y_{2i-1}-Y_{2i}))\bigg)\bigg]\ge\frac{9L/10}{N\sqrt{n}}\bigg].
\end{align}
We now bound the probability appearing on the right hand side of the above equation uniformly for $b \in B_0$. We fix $b \in B_0$, and note that, by definition, there is a set $I = \{i_1,\dots, i_k\} \subseteq \lfloor \epsilon n \rfloor$ such that $|I| = k \ge p(1-p)\epsilon n/4$ and such that for all $j \in [k]$,
\[b_{2{i_j}-1}Y_{2{i_j}-1}+b_{2{i_j}}Y_{2{i_j}}+b_{i_j}'(b_{2{i_j}}-b_{2{i_j-1}})(Y_{2i_j-1}-Y_{2i_j}) = Y_{2i_j-1}+b_{i_j}'(Y_{2i_j}-Y_{2i_j-1}).\]
For $j \in [k]$, let $Y_j^{b}:= Y_{2i_j} - Y_{2i_j - 1}$. Let $Y_{-2\cdot I}$ denote all components of $Y_1,\dots,Y_{2\lfloor \epsilon n \rfloor}$, except those corresponding to indices in $2\cdot I$, and let $Y_{2\cdot I}$ denote the remaining components. Then, for $b \in B_0$ and a choice of $Y_{-2\cdot I}$, we define the random function (depending on $Y_{2\cdot I}$),
\[\wt{g}_{b,Y_{-2\cdot I}}(t) := \mb{E}_{b'}g\bigg(t+\sum_{j=1}^{\lfloor p(1-p)\epsilon n/4\rfloor}b_j'Y_j^b\bigg).\]
Thus, we see that for any $b \in B_0$ and $Y_{-2\cdot I}$, the probability appearing on the right hand side of \cref{eq:union-B_0} is bounded by
\[\mb{P}\bigg[\snorm{\wt{g}_{b, Y_{-2\cdot I}}}_{\infty} \ge \frac{9L/10}{N\sqrt{n}} \bigg],\]
where the probability is over the choice of $Y_{2\cdot I}$. 

At this point, we can apply \cref{prop:linfty-decrement} to $\wt{g}_{b,Y_{-2\cdot I}}$. Let us quickly check that the hypotheses of \cref{prop:linfty-decrement} are satisfied. The assumptions on $g$ needed in \cref{prop:linfty-decrement} are satisfied because the same properties are assumed in \cref{prop:slice-linfty-decrement} (see below for the log-Lipschitz condition). Moreover, $b_1',\dots, b'_{\lfloor p(1-p)\epsilon n/4 \rfloor}$ are independent $\on{Ber}(1/2)$ random variables. Finally, notice that, given $Y_{-2\cdot I}$, each $Y_j^{b}$ is a random variable uniform on some disjoint intervals of cardinality at least $N$ each (since $Y_j^{b}$ is a translation of $Y_{2i_j}$ which is assumed to satisfy this property).

Thus, \cref{prop:linfty-decrement} shows that the expression on the right hand side of \cref{eq:union-B_0} is bounded above by
\begin{align*}
|B_0|\sup_{b\in B_0, Y_{-2\cdot I} }\mb{P}\bigg[\snorm{\wt{g}_{b, Y_{-2\cdot I}}}_\infty > \frac{9L/10}{N\sqrt{n}}\bigg]\le 2^n\exp(-Mp(1-p) n/4),
\end{align*}
provided that we choose $\eta_{\ref{prop:slice-linfty-decrement}}$ sufficiently small compared to $\eta_{\ref{prop:linfty-decrement}}(1/2, p(1-p)\epsilon/8, L_0, \tilde{R}, M)$. The desired result now follows after rescaling $M$ by a constant factor (depending on $p$).
\end{proof}

\subsection{Putting everything together}
We now prove the main result of this section, \cref{thm:inversion-of-randomness}. 
\begin{proof}[Proof of \cref{thm:inversion-of-randomness}]
Fix any admissible parameters $\delta, K_1,K_2,K_3,p,\epsilon,N$, and the given parameter $M \ge 1$. We need to choose $L_{\ref{thm:inversion-of-randomness}}, \gamma_{\ref{thm:inversion-of-randomness}}, \eta_{\ref{thm:inversion-of-randomness}}, n_{\ref{thm:inversion-of-randomness}}$, where the first two quantities are allowed to depend on all the parameters \emph{except} M, and the last two quantities are allowed to depend on all the parameters. 

We let 
\[L' := L_{\ref{prop:rough-Linfty}}(p, \epsilon/2, \delta, K_1, K_2, K_3, M); \quad \gamma_{\ref{thm:inversion-of-randomness}} := \gamma =  \min\{\gamma_{\ref{prop:rough-Linfty}}(p,\epsilon/2, \delta, K_1,K_2,K_3), \gamma_{\ref{prop:slice-linfty-decrement}}(p)\}/4;\]
note that $\gamma_{\ref{thm:inversion-of-randomness}} = \gamma$ does not depend on $M$.
We choose
\[\wt{R}:= C_{\ref{lem:slice-levy-kolmogorov-rogozin}}(1/4, \delta/2, K_1, K_2); \quad L_{\ref{thm:inversion-of-randomness}}:= 16\wt{R};\]
note that $L_{\ref{thm:inversion-of-randomness}}$ does not depend on $M$, as desired. 
We choose $q$ to be the smallest positive integer for which
\[0.95^{q} L' \le 16\wt{R}.\]
Now, let 
\[\eta_{\ref{thm:inversion-of-randomness}} = \eta_{\ref{prop:slice-linfty-decrement}}(p, \gamma n/2q, \max\{L', 16\wt{R}\}, \wt{R}, M),\]
and suppose that $f\in \ell_1(\mb{Z})$ with $\snorm{f}_{1} = 1$, and that $\log_2 f$ is $\eta_{\ref{thm:inversion-of-randomness}}$-Lipschitz.

\textbf{Step 1: }Let $\ell := \lceil (1-\gamma)n \rceil$. Since $N\le\binom{n}{pn}\exp(-\epsilon n)$, it follows from \cref{prop:rough-Linfty} and the choice of parameters that, as long as $s/\ell \in [p-4\gamma, p+4\gamma]$, then for all sufficiently large $n$,
\[\mb{P}[\snorm{f_{\mc{A},s,\ell}}_\infty\ge L'(N\sqrt{n})^{-1}]\le\exp(-2Mn).\]
Let $\mc{E}_0$ be the event that $\snorm{f_{\mc{A},s,\ell}}_{\infty} < L'(N\sqrt{n})^{-1}$ simultaneously for all $s \in [p\ell - 4\gamma\ell, p\ell + 4\gamma \ell]$. Then, by the union bound, we see that 
\[\mb{P}[\mc{E}_0^c]\le n\exp(-2Mn).\]

\textbf{Step 2: }We split the interval $[\ell+1,n]$ into $q$ subintervals of size $\gamma n/q$ each, which we denote by $I_1,\dots, I_q$. Note that
\[f_{\mc{A}, s, n}(t) = \mb{E}_b f_{\mc{A}, s', \ell}\bigg(t + \sum_{i=1}^{q}\sum_{j\in I_i}b_jX_j\bigg);\]
here, we have sampled a uniform point in $\{0,1\}^{n}_{s}$ by first sampling its last $\gamma n$ coordinates, which we denote by $b = (b_{\ell+1},\dots,b_n)$, and then sampling the remaining coordinates, subject to the constraint that their sum is exactly $s':= s - b_{\ell+1}-\dots - b_n$.

Note that if $s \in [pn - \gamma n, pn + \gamma n]$, then we always have $s' \in [p\ell - 2\gamma\ell, p\ell + 2\gamma\ell]$. In particular, on the event $\mc{E}_0$, we have that for all $s \in [pn - \gamma n, pn + \gamma n]$, for all possible realizations of $s'$,
\[\snorm{f_{\mc{A},s',\ell}}_{\infty} < L'(N\sqrt{n})^{-1}.\]

\textbf{Step 3: }For $i \in [q]$, let
\[s_i := \sum_{j\in I_i}b_j,\]
where we use notation as above. Let $\mc{G}$ be the event that $s_i \in [p|I_i|-\gamma|I_i|, p|I_i|+\gamma|I_i|]$ for all $i\in [q]$. Then, by a standard large deviation estimate, it follows that for all $n$ sufficiently large, $\mb{P}[\mc{G}^{c}] \le n^{-1/2}$ (say). Hence, using the conclusion of the previous step, conditioning on the values $s_1,\dots, s_q$, and using the law of total probability, we see that on the event $\mc{E}_0$,
\begin{align}
\label{eq:sup-s_i}
f_{\mc{A},s,n}(t) \le L'n^{-1/2}\cdot (N\sqrt{n})^{-1} + \sup_{s_1/|I_1|,\dots, s_q/|I_q| \in [p-\gamma, p+\gamma]}\mb{E}_b f_{\mc{A}, s', \ell}\bigg(t + \sum_{i=1}^{q}\sum_{j\in I_i}b_jX_j\bigg),
\end{align}
where each vector $(b_j)_{j\in I_i}$ is independently sampled uniformly from $\{0,1\}^{I_i}_{s_i}$. Fix integers $s_1,\dots,s_q$ such that $|s_i|/|I_i| \in [p-\gamma, p+\gamma]$. In particular, this fixes $s' = s - s_1-\dots -s_q$. We define the sequence of functions $(g_k)_{k=0}^{q}$ by
\begin{align*}
    g_0(t) &= f_{\mc{A},s',\ell}(t)\\
    g_k(t) &= \mb{E}_b\bigg[f_{\mc{A},s',\ell}\bigg(t + \sum_{i=1}^{k}\sum_{j \in I_i}b_j X_j\bigg)\bigg| \sum_{j \in I_i}b_j = s_i \forall i \in [k] \bigg]\text{ for }k\ge 1.
\end{align*}
Note that for all $k \in [q]$,
\begin{align*}
    g_k(t) = \mb{E}_b\bigg[g_{k-1}\bigg(t + \sum_{j \in I_k}b_j X_j\bigg) \bigg| \sum_{j \in I_k}b_j = s_k\bigg].
\end{align*}

\textbf{Step 4: }We now wish to apply \cref{prop:slice-linfty-decrement} to $g_0,\dots, g_q$ successively with parameters $L_0,\dots, L_q$ given by
\[L_k := L'\cdot (19/20)^{k}.\]
More precisely, for $k\ge 1$, let $\mc{E}_k$ denote the event that $\snorm{g_k}_{\infty} \le L_k (N\sqrt{n})^{-1}$. We claim that 
\begin{align}
\label{eq:taming}
\mb{P}[\mc{E}_k | \mc{E}_{k-1}] \ge 1-\exp(-2Mn)\text{ for all } k \in [q].
\end{align}
Let us quickly check that on the event $\mc{E}_{k-1}$, the hypotheses of \cref{prop:slice-linfty-decrement} are satisfied for $g_{k-1}$. We have $\snorm{g_{k-1}} = 1$ and $\log_{2}g_{k-1}$ is $\eta_{\ref{thm:inversion-of-randomness}}$-Lipschitz since it is a convex combination of functions satisfying the same properties. The condition $\snorm{g_{k-1}}_{\infty} \le L_k (N\sqrt{n})^{-1}$ holds on $\mc{E}_{k-1}$ by definition. Moreover, the condition that for any interval $I$ of size $N$,
\[\sum_{t\in I}g_{k-1}(t) \le \wt{R}/\sqrt{n}\]
follows since, by \cref{lem:slice-levy-kolmogorov-rogozin} and our choice of $\wt{R}$, the analogous property holds for $f_{\mc{A},s',\ell}$, and hence for $g_{k-1}$, which is a convex combination of translates of $f_{\mc{A},s',\ell}$. \cref{prop:slice-linfty-decrement} now justifies \cref{eq:taming}. In particular, by the union bound, we have
\[\mb{P}[\mc{E}_q \mid  \mc{E}_0] \ge 1-q\exp(-2Mn).\]
Combining this with the estimate on $\mc{P}[\mc{E}_0^{c}]$, using \cref{eq:sup-s_i} (with an at most $n^{q}$ sized union bound to account for the choice of $s_1,\dots,s_q$), and finally taking $n$ sufficiently large gives the desired conclusion.
\end{proof}

\section{The structure theorem}
\label{sec:structure-theorem}
\begin{definition}
Let $p \in (0,1/2]$, $\gamma \in (0,p)$, and $L \ge 1$. Then, for any integer $n \ge 1$ and $x \in \mb{S}^{n-1}$, we define
\[\mc{T}_{p, \gamma}(x,L) := \sup\bigg\{t \in (0,1): \mc{L}_{p, \gamma}\bigg(\sum_{i=1}^{n}b_i x_i, t\bigg) > Lt\bigg\}. \]
\end{definition}

For $p \in (0,1/2)$, let $H = H(p)$ denote an $(n-1) \times n$ random matrix, each of whose entries is an independent copy of a $\on{Ber}(p)$ random variable. We fix a function $v(H)$ which takes as input an $(n-1)\times n$ matrix and outputs a unit vector in its right kernel. The goal of this section is to prove the following result about the threshold function of $v(H)$. 

\begin{proposition}
\label{prop:structure}
Let $\delta, \rho, \epsilon \in (0,1)$. There exist $L_{\ref{prop:structure}} = L_{\ref{prop:structure}}(\delta, \rho, p, \epsilon)$, $\gamma_{\ref{prop:structure}} = \gamma_{\ref{prop:structure}}(\delta, \rho, p, \epsilon)$ and $n_{\ref{prop:structure}} = n_{\ref{prop:structure}}(\delta, \rho, p, \epsilon)$ such that for all $n \ge n_{\ref{prop:structure}}$, with probability at least $1-4^{-n}$, exactly one of the following holds. 
\begin{itemize}
    \item $v(H) \in \on{Cons}(\delta, \rho)$, or
    \item $\mc{T}_{p, \gamma_{\ref{prop:structure}}}(v(H), L_{\ref{prop:structure}}) \le \binom{n}{pn}^{-1}\exp(\epsilon n)$.
\end{itemize}
\end{proposition}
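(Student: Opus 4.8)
The plan is to run the inversion‑of‑randomness scheme of \cite{Tik20}, with \cref{cor:threshold-inversion} as the engine. Write $N := \lfloor\binom{n}{pn}\exp(-\epsilon n/2)\rfloor$ and let $\mc{B}$ be the event that \emph{both} alternatives in the proposition fail, i.e.\ $v(H)\notin\on{Cons}(\delta,\rho)$ and $\mc{T}_{p,\gamma_{\ref{prop:structure}}}(v(H),L_{\ref{prop:structure}}) > \binom{n}{pn}^{-1}\exp(\epsilon n)$; the task is to choose $L_{\ref{prop:structure}},\gamma_{\ref{prop:structure}},n_{\ref{prop:structure}}$ so that $\mb{P}[\mc{B}]\le 4^{-n}$. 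Since $\on{Cons}(\delta,\rho)$ is monotone in $\delta$ and $\rho$, we may shrink them and assume $\delta,\rho\in(0,1/4)$. On $\mc{B}$, unwinding the definition of $\mc{T}_{p,\gamma}$ and using that $\mc{L}_{p,\gamma}(\,\cdot\,,r)$ grows at most linearly under covering a length‑$r$ interval by shorter intervals, one obtains $\mc{L}_{p,\gamma}\big(\sum_i b_i v(H)_i,\,1/N\big)\ge (L_{\ref{prop:structure}}/2)\,N^{-1}$.

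Next I would discretize into admissible sets. Since $v(H)\notin\on{Cons}(\delta,\rho)$, \cref{lem:nonconstant-admissible} yields $\nu,\nu'$ and a finite set $\mc{K}$ of positive reals --- all depending only on $\delta,\rho$ --- together with $\ge\nu n$ ``small'' and $\ge\nu n$ ``medium'' coordinates of $v(H)$, in one of the two configurations matching conditions (P1)--(P2) or (Q1)--(Q2) of admissibility. Choosing $\delta_\ast := \min(\nu,1/8)$ of each type, a pair $(\kappa,\kappa')\in\mc{K}^2$, and a P‑vs‑Q label, determines a permutation $\sigma$ drawn from a family of size at most $\binom{n}{\delta_\ast n}^2\cdot O_{\mc{K}}(1) = e^{O_{\delta,\rho}(n)}$; rounding $\sqrt{n}N\,v(H)$ coordinatewise to $\tilde{x}\in\mb{Z}^n$ and applying $\sigma$, the vector $\sigma\tilde{x}$ lands in an $(N,n,K_1,K_2,K_3,\delta_\ast)$‑admissible set $\mc{A}$: for $i\le 2\delta_\ast n$ we use intervals of size $O(N)$ placed as prescribed (which \cref{lem:nonconstant-admissible} forces to contain the rounded entries), and for each remaining $i$ we simply take the length‑$(2N+1)$ integer interval centred at $\tilde{x}_i$; this makes $|\mc{A}|\le (K_3 N)^n$ with $K_1,K_2,K_3$ constants depending only on $\delta,\rho$. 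The scale‑matching subtlety is that the rounding error $\sum_i b_i\big(\sqrt{n}N\,v(H)_i-\tilde{x}_i\big)$, a sum of at most $\sum_i b_i$ terms each of size $\le 1/2$, is only $O(n)$ in the worst case but is $O(\sqrt{n})$ off an event of probability $\le \exp(-c_0 n)$ by Hoeffding on the slice; since $\exp(-c_0 n)\ll L_{\ref{prop:structure}}/N$ (as $N$ is only single‑exponential in $n$), discarding that event and covering intervals once more gives $\mc{L}_{p,\gamma}\big(\sum_i b_i(\sigma\tilde{x})_i,\sqrt{n}\big)\ge \Omega(L_{\ref{prop:structure}})\,N^{-1}$. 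Taking $L_{\ref{prop:structure}}$ large, $\gamma_{\ref{prop:structure}}\le\gamma_{\ref{cor:threshold-inversion}}$, and invoking \cref{cor:threshold-inversion} with $\epsilon$ replaced by $\epsilon/2$, the parameters $K_1,K_2,K_3,\delta_\ast$ fixed above, and a large constant $M$ to be chosen last, we get $\sigma\tilde{x}\in\mc{A}_{\mathrm{bad}}:=\{a\in\mc{A}:\mc{L}_{p,\gamma}(\sum_i b_i a_i,\sqrt{n})\ge L_{\ref{cor:threshold-inversion}}N^{-1}\}$ and $|\mc{A}_{\mathrm{bad}}|\le e^{-Mn}|\mc{A}|$.

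The hard part is converting this counting bound into a probability bound for $v(H)$. A direct union bound $\mb{P}[\mc{B}]\le\sum_{\sigma}\sum_{a\in\mc{A}_{\mathrm{bad}}}\mb{P}[v(H)\text{ rounds to }\sigma^{-1}a]$ with the naive per‑cell estimate $\mb{P}[v(H)\approx y]\le\mb{P}[\langle r,y\rangle\approx 0]^{n-1}\le(1-p)^{n-1}$ is hopeless: $|\mc{A}_{\mathrm{bad}}|$ is of order $e^{-Mn}(K_3N)^n$, and since $N$ is single‑exponential with base $<1$ (as $p<1/2$) this is roughly $e^{-Mn}2^{cn^2}$ for some $c<1$, which dwarfs the single‑exponential $(1-p)^{n-1}$ --- and in fact $\mb{P}[\langle r,y\rangle\approx 0]\ge(1-p)^n$ always, so no per‑cell bound can save the naive union bound. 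The point, as in \cite{Tik20}, is instead to exploit that \cref{cor:threshold-inversion} gains the factor $e^{-Mn}$ for \emph{every} $M$: one shows that the law of the kernel direction $v(H)$, restricted to directions outside $\on{Cons}(\delta,\rho)$, does not concentrate at the exponentially‑fine scale $1/(\sqrt{n}N)$ --- morally, that this law is within a factor $C_{\ref{prop:structure}}^{\,n}$ of the uniform measure on the sphere, so that the $v(H)$‑mass of the cells $\{Q_a : a\in S\}$ is at most $C_{\ref{prop:structure}}^{\,n}\,|S|/|\mc{A}|$ (up to $n^{O(1)}$) for every $S\subseteq\mc{A}$ --- which is a Rudelson--Vershynin‑type invertibility estimate for the $(n-1)\times n$ matrix $H$ away from almost‑constant directions. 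Granting such a dispersion estimate (and adding the harmless event $\{\snorm{H}>C_0\sqrt{n}\}$, of probability $\le e^{-cn}$), one concludes $\mb{P}[\mc{B}]\le e^{O(n)}\cdot C_{\ref{prop:structure}}^{\,n}\,|\mc{A}_{\mathrm{bad}}|/|\mc{A}| \le (C'e^{-M})^n\le 4^{-n}$ on choosing $M$ large in terms of all prior constants. I expect this non‑concentration estimate to be the main obstacle: it carries essentially all the probabilistic content of the proposition, whereas everything else is bookkeeping around \cref{lem:nonconstant-admissible} and \cref{cor:threshold-inversion}.
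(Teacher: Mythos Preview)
Your setup through \cref{cor:threshold-inversion} is essentially right, but the endgame has a genuine gap, and the fix you propose is not the one that works.

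The paper does \emph{not} attempt any dispersion estimate on the law of $v(H)$; that would be circular, since such an estimate is morally equivalent to the proposition itself. Instead, the per-cell bound comes from the \emph{anticoncentration of the lattice point $y'$ itself}, not from properties of the kernel distribution. Concretely: one performs a dyadic decomposition over the value of the threshold, working on the event $\mc{U}_N := \{\mc{T}_{p,\gamma}(v(H),L)\in[1/N,2/N]\}$ for each $N$ in the relevant range. On $\mc{U}_N$ one has \emph{both} a lower bound $\mc{L}_{p,\gamma}(\sum b_i v_i, 2/N)\gtrsim L/N$ (which places the rounded vector in the bad set) \emph{and} an upper bound $\mc{L}_{p,\gamma}(\sum b_i v_i, s)\le 2Ls$ for all $s\ge 1/N$. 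After randomized rounding (\cref{lem:round}, needed so that both (R2) and (R4) survive---coordinatewise rounding loses (R4), which is required to control $\snorm{Hy'}_2$ via the centred operator norm), property (R2) transfers this upper bound to $y'$. Since $Hv=0$ one has $\snorm{Hy'}_2\le K_0'n$ on the high-probability norm event, and tensorization using (R2) gives $\mb{P}[\snorm{Hy'}_2\le K_0'n\wedge\mc{W}_Q]\le (C L/N)^{n-Q}$ for each fixed $y'$. The union bound over $\mc{D}_j$ is then
\[
e^{-Mn}(K_3N)^n\cdot (CL/N)^{n-Q} \;\le\; e^{-Mn}\cdot (CK_3L)^n\cdot N^{Q},
\]
and the point is that the $N^n$ from $|\mc{D}_j|$ cancels against the $N^{-(n-Q)}$ from the per-point bound, leaving only single-exponential factors that are killed by choosing $M$ large.

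Your single fixed $N$ cannot do this: with $\mc{T}_{p,\gamma}(v,L)$ potentially as large as $C_0/\sqrt{n}$ while $1/N$ is exponentially small, the only upper bound you get on $\mc{L}_{p,\gamma}(\sum b_i v_i,1/N)$ is $O(1/\sqrt n)$, so the per-row probability is $O(1/\sqrt n)$ rather than $O(1/N)$, and the $N^n$ in $|\mc{A}_{\mathrm{bad}}|$ is left uncancelled. Your computation showing the naive union bound ``hopeless'' is therefore an artefact of the missing dyadic localisation, not evidence that a different mechanism is needed. In short: replace the fixed $N$ by a union over dyadic $N\in[C_0^{-1}\sqrt{n},\binom{n}{pn}e^{-\epsilon n}]$, use randomized rounding to retain (R2) and (R4), and bound $\mb{P}[\snorm{Hy'}_2\le K_0'n]$ via (R2) and tensorization; the dispersion heuristic should be discarded.
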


The proof will make use of the following lemma, proved using randomized rounding (cf.~\cite{Liv18}), which is a slight generalization of \cite[Lemma~5.3]{Tik20}. As the proof is identical, we omit details. 
\begin{lemma}\label{lem:round}
Let $y = (y_1,\ldots,y_n)\in \mb{R}^n$ be a vector, and let $\mu>0$, $\lambda\in \mb{R}$ be fixed. Let $\Delta$ denote a probability distribution which is supported in $\{0,1\}^{n}$. There exist absolute constants $c_{\ref{lem:round}}$ and $C_{\ref{lem:round}}$ for which the following holds. 

Suppose that for all $t\ge \sqrt{n}$,
\[\mb{P}\bigg[\bigg|\sum_{i=1}^nb_iy_i-\lambda\bigg|\le t\bigg]\le \mu t,\]
where $(b_1,\dots, b_n)$ is distributed according to $\Delta$. Then, there exists a vector $y' = (y'_1,\dots,y'_n) \in \mb{Z}^{n}$ satisfying
\begin{enumerate}[(R1)]
    \item $\snorm{y-y'}_\infty\le 1$,
    \item $\mb{P}[|\sum_{i=1}^nb_iy_i'-\lambda|\le t]\le C_{\ref{lem:round}}\mu t$ for all $t\ge \sqrt{n}$,
    \item $\mc{L}(\sum_{i=1}^nb_iy_i',\sqrt{n})\ge c_{\ref{lem:round}}\mc{L}(\sum_{i=1}^nb_iy_i,\sqrt{n})$,
    \item $|\sum_{i=1}^ny_i-\sum_{i=1}^ny_i'|\le C_{\ref{lem:round}}\sqrt{n}$.
\end{enumerate}
\end{lemma}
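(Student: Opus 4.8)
The plan is to produce $y'$ by \emph{independent randomized rounding}, following the strategy of \cite[Lemma~5.3]{Tik20}, and to verify that (R1)--(R4) hold simultaneously with positive probability over the rounding. Concretely, let $\zeta_1,\dots,\zeta_n$ be independent with $\mb{P}[\zeta_i = 1] = \{y_i\}$, the fractional part of $y_i$, and set $y_i' := \lfloor y_i\rfloor + \zeta_i$, so that $\eta_i := y_i' - y_i = \zeta_i - \{y_i\}$ has $\mb{E}[\eta_i] = 0$, $|\eta_i|\le 1$, and $\eta_i \equiv 0$ whenever $y_i\in\mb{Z}$. Then (R1) holds deterministically. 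For (R4), $\sum_i(y_i' - y_i) = \sum_i\eta_i$ is a sum of independent mean-zero variables of total variance $\sum_i\{y_i\}(1-\{y_i\})\le n/4$, so Chebyshev gives $|\sum_i y_i' - \sum_i y_i|\le C\sqrt n$ with probability at least $99/100$ for a suitable absolute $C$.

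The crux is (R2). Writing $\sum_i b_iy_i' = \sum_i b_iy_i + \sum_{i\in S}\eta_i$ with $S = \{i: b_i = 1\}$, and using Fubini and the fact that, conditionally on $b$, the $\eta_i$ remain independent, mean-zero and bounded by $1$, we get for each fixed $t\ge\sqrt n$
\[\mb{E}_\zeta\,\mb{P}_b\Big[\big|\textstyle\sum_i b_iy_i' - \lambda\big|\le t\Big] = \mb{E}_b\,\mb{P}_\zeta\Big[\big|\textstyle\sum_{i\in S}\eta_i\big|\ge \big|\textstyle\sum_{i\in S}y_i - \lambda\big| - t\ \Big|\ b\Big].\]
For each $S$, if $|\sum_{i\in S}y_i - \lambda|\le t$ the inner term equals $1$, and otherwise a conditional Bernstein inequality bounds it by $2\exp(-c\min(u^2/n, u))$ at deviation $u = |\sum_{i\in S}y_i - \lambda| - t$. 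Decomposing the range of $|\sum_{i\in S}y_i - \lambda|$ dyadically and invoking the hypothesis $\mb{P}_b[|\sum_ib_iy_i - \lambda|\le 2^{k+1}t]\le 2^{k+1}\mu t$, the resulting series in $k$ converges (for $t\ge\sqrt n$ the Bernstein factor at the $k$-th scale is at most $\exp(-c4^{k-1})$), yielding $\mb{E}_\zeta\mb{P}_b[|\sum_ib_iy_i' - \lambda|\le t]\le C_1\mu t$ for an absolute $C_1$. To pass from this per-scale estimate to a \emph{single} rounding satisfying (R2) at all scales $t\ge\sqrt n$, one observes that the bound is vacuous once $\mu t\ge 1$, and that for $t\ge n$ the deterministic estimate $|\sum_{i\in S}\eta_i|\le |S|\le n$ already gives $\mb{P}_b[|\sum_ib_iy_i' - \lambda|\le t]\le\mb{P}_b[|\sum_ib_iy_i - \lambda|\le 2t]\le 2\mu t$; over the remaining bounded collection of dyadic scales one applies Markov's inequality and a union bound, exactly as in \cite[Lemma~5.3]{Tik20}.

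For (R3), let $z_0$ nearly attain $q := \mc{L}(\sum_i b_iy_i, \sqrt n)$; conditioning on $b$ and using that $\sum_{i\in S}\eta_i$ has variance $\le n/4$, one shows that with constant conditional probability the rounding error stays below $\sqrt n/2$ on the event $\{|\sum_ib_iy_i - z_0|\le\sqrt n/2\}$, so $\mc{L}(\sum_ib_iy_i',\sqrt n)\gtrsim q$ on a set of roundings that meets the (R1),(R2),(R4)-good set; combined with the previous paragraph this furnishes a $\zeta$ realizing all four properties, and $c_{\ref{lem:round}}, C_{\ref{lem:round}}$ are absolute. I expect the main obstacle to be precisely the last reconciliation step for (R2): turning the scale-by-scale expectation bounds into a bound on the concentration function of $\sum_ib_iy_i'$ that is uniform over $t\ge\sqrt n$ for one good rounding. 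I also note that, in contrast with \cite{Tik20}, $\Delta$ is an arbitrary distribution on $\{0,1\}^n$; this causes no difficulty because the randomness of $b$ enters only through the hypothesis on $\sum_ib_iy_i$, while every tail estimate on the rounding error $\sum_{i\in S}\eta_i$ is taken conditionally on $b$ and uses solely the independence of the $\zeta_i$.
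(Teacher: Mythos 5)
Your route --- independent randomized rounding $y_i'=\lfloor y_i\rfloor+\zeta_i$ with $\mb{P}[\zeta_i=1]=\{y_i\}$, followed by checking that (R1)--(R4) hold simultaneously for a positive fraction of roundings --- is exactly the approach the paper intends: the paper gives no proof of \cref{lem:round}, deferring entirely to the randomized rounding of \cite[Lemma~5.3]{Tik20} (cf.~\cite{Liv18}), and your treatment of (R1), (R4), of the per-scale expectation bound $\mb{E}_\zeta\mb{P}_b[|\sum_ib_iy_i'-\lambda|\le t]\le C_1\mu t$ via Fubini, conditional Hoeffding tails for the rounding error, and a dyadic decomposition, and of (R3) are all sound. (For (R3), make sure to apply the reverse Markov inequality using the upper bound $\mb{P}_b[|\sum_ib_iy_i-z_0|\le\sqrt n/2,\ |\sum_{i\in S}\eta_i|\le\sqrt n/2]\le\mc{L}(\sum_ib_iy_i,\sqrt n)$ rather than the trivial bound by $1$; otherwise the set of roundings realizing (R3) only has probability $\gtrsim q$, which need not meet the (R2)--(R4)-good set.) The one point I would push back on is your description of the reconciliation step for (R2): after discarding $t\ge n$ (handled deterministically) and $\mu t\ge 1$ (vacuous), the surviving dyadic scales $t_k=2^k\sqrt n\le\min(n,1/\mu)$ number on the order of $\log n$ --- not an absolutely bounded collection --- so a plain Markov-plus-union-bound over them forces $C_{\ref{lem:round}}$ to grow like $\log n$. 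That would not be harmless downstream: $C_{\ref{lem:round}}$ enters the tensorization estimate \cref{eq:tensorization} raised to the power $n-Q$, which must ultimately be beaten by $e^{-Mn}$ for a fixed $M$, so the constant genuinely must be absolute. Obtaining a single rounding for which (R2) holds with an absolute constant uniformly over all $t\ge\sqrt n$ is precisely the delicate content of the argument in \cite[Lemma~5.3]{Tik20} that you (like the paper) invoke as a black box; you should either reproduce that step in full or retract the claim that the remaining collection of scales is bounded.
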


\begin{proof}[Proof of \cref{prop:structure}]
For lightness of notation, we will often denote $v(H)$ simply by $v$. If $v \notin \on{Cons}(\delta, \rho)$, it follows from \cref{lem:slkr} that for all $\gamma < p/4$, there exists some $L_0 = L_0(\delta, \rho, p)$ and $C_0 = C_0(\delta, \rho, p)$ such that $\mc{T}_{p, \gamma}(v,L) \le C_0\cdot n^{-1/2}$ for all $L \ge L_0$. Fix $K_0$ such that the event $\{\snorm{H - pJ_{n-1\times n}} \le K_0 \sqrt{n}\}$ holds with probability at least $1-2^{-1729n}$.

Let $L > L_0$ be a parameter to be chosen later, depending on $\delta, \rho, p, \epsilon$. Let $\gamma < 1/4$ be a parameter to be chosen later depending on $\delta, \rho, p, \epsilon$. Fix some $N \in [C_0^{-1}\cdot \sqrt{n},\binom{n}{pn}\exp(-\epsilon n)]$, and let $\mc{U}_N$ denote the event that $\mc{T}_{p,\gamma}(v,L) \in [1/N, 2/N]$. We proceed to bound $\mb{P}[\mc{U}_N \wedge \mc{E}_{K_0}]$. 

Let $D := C_1 \sqrt{n} N$, where $C_1 = C_1 (\delta, \rho) \ge 1$ will be an integer chosen later. Let $y := Dv$. Since for all $t \ge \sqrt{n}$,
\begin{align*}
    \mb{P}\bigg[\bigg|\sum_{i=1}^{n} b_i y_i\bigg| \le t\bigg]
    &= \mb{P}\bigg[\bigg|\sum_{i=1}^{n} b_i v_i\bigg| \le \frac{t}{C_1 \sqrt{n}N}\bigg]\\
    &\le \mb{P}\bigg[\bigg|\sum_{i=1}^{n} b_i v_i\bigg| \le \frac{t}{\sqrt{n}N}\bigg]
    \le \frac{L}{N}\cdot \frac{2t}{\sqrt{n}}, 
\end{align*}
it follows that by applying \cref{lem:round} to $y$, with $\mu:= 2L/(N\sqrt{n})$, $\lambda = 0$, and the distribution on $\mb{R}^{n}$ coinciding with that of $n$ independent $\on{Ber}(p)$ random variables conditioned to have sum in $[pn -\gamma n, pn + \gamma n]$, we see that for all sufficiently large $n$, there exists some $y' \in \mb{Z}^{n}$ satisfying the conclusions of \cref{lem:round} (note that $C_{\ref{lem:round}}, c_{\ref{lem:round}}$ in this case are absolute constants). By (R3), we have
\begin{align}
\label{eq:lower-bound-levy}
    \mc{L}\bigg(\sum_{i=1}^{n}b_i y_i', \sqrt{n}\bigg) 
    &\ge c_{\ref{lem:round}}\mc{L}\bigg(\sum_{i=1}^{n}b_i v_i, 1/(C_1N)\bigg) \nonumber \\
    &\ge (2C_1)^{-1}\cdot c_{\ref{lem:round}}\mc{L}\bigg(\sum_{i=1}^{n} b_i v_i, 2/N\bigg) \nonumber \\
    &\ge (2C_{1})^{-1}\cdot c_{\ref{lem:round}}\cdot 2LN^{-1}.
\end{align}
Moreover, by (R1) and (R4), we have on the event $\mc{E}_{K_0}$ that
\begin{align}
\label{eq:bound-norm-rounding}
    \snorm{H y'}_{2} 
    &= \snorm{H(y'-y)}_{2} \nonumber \\
    &\le \snorm{(H-pJ_{n-1\times n})(y'-y)}_{2} + \snorm{pJ_{n-1\times n}(y'-y)}_{2} \nonumber \\
    &\le K_0' n,
\end{align}
where $K_0'$ is a constant depending only on $K_0$.

We claim that there is an absolute constant $C_2 > 0$ and a constant $C_3 = C_3(\delta, \rho)$, a collection of real numbers (depending on $\delta, \rho$) $(K_{3})_j > (K_{2})_j > (K_{1})_j > 1$ for $j \in [C_3\cdot C_2^{n}]$, a positive real number $\delta' > 0$  (depending on $\delta, \rho$), and a collection of $(N,n,(K_1)_j, (K_2)_j, (K_3)_j, \delta')$-admissible sets $\mc{A}_j$ (depending on $\delta, \rho$) for $j \in [C_3 \cdot C_2^{n}]$ such that $y' \in \mc{A}_j$ for some $j \in [C_3 \cdot C_2^{n}]$.  Let $\wt{v} := y'/D$. By (R1), it follows that $\snorm{\tilde{v} - v}_{\infty} \le D^{-1}$. Moreover, by \cref{lem:nonconstant-admissible}, there exist $\nu, \nu'$ depending on $\delta, \rho$, and a finite set $\mc{K}$ of positive real numbers, also depending on $\delta, \rho$, such that either the first conclusion or the second conclusion of \cref{lem:nonconstant-admissible} is satisfied for $v$. Since $D^{-1} \le C_{1}^{-1}C_0/n$, we see that there exists $n_0$ depending on $\delta, \rho, p$ such that for all $n \ge n_0$, $\wt{v}$ satisfies either the first conclusion or the second conclusion of \cref{lem:nonconstant-admissible}, with $\nu/2, \nu'/2$ and $2^{-1}\cdot \mc{K} \cup 2\cdot \mc{K}$. After paying an overall factor of at most $2^{n}$, we may assume that the $\nu n$ coordinates of $\wt{v}$ satisfying this conclusion are the first $\nu n$ coordinates. The remaining $(1-\nu)n$ coordinates of $\wt{v}$ lie in the $(1-\nu)n$-dimensional ball of radius $1$. By a volumetric argument, we see that this ball can be covered by at most $100^{n}$ translates of $[0, n^{-1/2}]^{(1-\nu)n}$. By paying an overall factor of $100^{n}$, we may fix the translate of $[0,n^{-1/2}]^{(1-\nu)n}$ that the remaining $(1-\nu)n$ coordinates lie in. Note that each such translate contains at most $(2D/\sqrt{n})^{(1-\nu)n}$ points in $(1/D)\mb{Z}^{n}$. Finally, taking $C_1(\delta, \rho)$ sufficiently large so that $C_{1}(\delta, \rho)\cdot \min(2^{-1}\cdot \mc{K}) > 1$ and rescaling by $D$ proves the claim. 

To summarize, we have so far shown the following. For parameters $L$ and $\gamma$ depending on $\delta, \rho, p, \epsilon$ (to be chosen momentarily), on the event $\mc{U}_N \wedge \mc{E}_{K_0}$, the event $\mc{B}_j$ holds for some $j \in [C_2\cdot C_3^{n}]$, where $\mc{B}_j$ is the event that there exists some $y' \in \mc{A}_j$ satisfying \cref{eq:lower-bound-levy}, \cref{eq:bound-norm-rounding}, and (by (R2)),
\begin{align}
\label{eq:sbp-y}
    \mb{P}\bigg[\bigg|\sum_{i=1}^{n}b_i y_i'\bigg| \le t\bigg] \le C_{\ref{lem:round}}\mu t \text{ for all }t\ge \sqrt{n},
\end{align}
where recall that $\mu = 2L/(N\sqrt{n})$. 

We are now ready to specify the parameters $L$ and $\gamma$. First, let
\[L' := \max_j L_{\ref{cor:threshold-inversion}}(p, \epsilon, \delta', (K_1)_j, (K_2)_j, (K_3)_j); \gamma' := \min_j \gamma_{\ref{cor:threshold-inversion}}(p, \epsilon, \delta', (K_1)_j, (K_2)_j, (K_3)_j)/4.\]
Then, let
\[L:= (2C_1)\cdot c_{\ref{lem:round}}^{-1}\cdot L' + L_0; \gamma:= \gamma'.\]

Our goal is to bound $\mb{P}[\cup_j \mc{B}_j]$. Let $H_1,\dots,H_{n-1}$ denote the rows of $H$. By a standard large deviation estimate, we can find an absolute constant $Q \ge 1$ such that the event 
\[\mc{W}_Q: = \{ |\{i \in [n-1]: \sum_{j=1}^{n}H_{i,j} \notin [pn - \gamma n, pn+\gamma n] \}| \le Q \} \]
holds with probability at least $1- 2^{-1729n}$. Then, it suffices to bound $\mb{P}[\cup_j (\mc{B}_j \wedge \mc{W}_Q)]$. We will provide a uniform (in $j$) upper bound on $\mb{P}[\mc{B}_j \wedge \mc{W}_Q]$, and then conclude using the union bound. Note that on the event $\mc{B}_j$, $y'$ belongs to the set $\mc{D}_j$ defined by
\[\mc{D}_j := \bigg\{x\in \mc{A}_j: \mc{L}_{p,\gamma}\bigg(\sum_{i=1}^nb_ix_i,\sqrt{n}\bigg)\ge L N^{-1}\bigg\}.\]
By the choice of $L$, it follows from \cref{cor:threshold-inversion} that for any $M \ge 1$, for all sufficiently large $n$,
\begin{align}
\label{eq:entropy}
|\mc{D}_j| \le e^{-Mn}|\mc{A}_j| \le e^{-Mn}(K_3N)^{n},
\end{align}
where $K_3:= \max_j (K_3)_j$. Moreover, it follows from \cref{eq:sbp-y} and the standard tensorization lemma (cf.~\cite[Lemma~3.2]{Tik20}) that 
\begin{align}
\label{eq:tensorization}
\mb{P}[\{\snorm{Hy'}_{2} \le K_0'n\} \wedge \mc{W}_Q] \le \bigg(\frac{C_4LK_0'}{N}\bigg)^{n-Q},
\end{align}
where $C_4 \ge 1$ is an absolute constant. Here, we have used that on the event $\mc{W}_Q$, the entries of at least $n-Q$ rows have sum in $[pn - \gamma n, pn+\gamma n]$.

Finally, from \cref{eq:entropy} and \cref{eq:tensorization}, we see that first taking $M$ to be sufficiently large (compared to various constants depending on $\delta, \rho, p, \epsilon$), and then taking $n$ sufficiently large, gives the desired conclusion.
\end{proof}

\section{Proof of \texorpdfstring{\cref{thm:main}}{Theorem 1.3}}\label{sec:proof}
We now have all the ingredients needed to prove \cref{thm:main}. The proof uses the insight from \cite{LT20} of exploiting the exponential gap between $\binom{n}{pn}$ and $(1-p)^{n}$ for $p < 1/2$ by using a `row boosting' argument to reduce to an anticoncentration problem on a well-conditioned slice.

\begin{proof}[Proof of \cref{thm:main}]
Throughout, we fix functions $x(A),y(A)$ which take as input a matrix $A$ and output (fixed, but otherwise arbitrary) right and left least singular unit vectors, respectively. Let $B = B_n(p)$ for simplicity. Fix $\epsilon > 0$ such that $\binom{n}{pn}\exp(\epsilon n) \le (1-p-\epsilon)^{n}$.

\textbf{Step 1: }By the work of Rudelson and Vershynin \cite{RV08}, there is some $c_p > 0$ so that for all $t > 2^{-2c_p n}$,
\[\mb{P}[s_n(B_n(p))\le t/\sqrt{n}]\le C_pt;\]
note that there is a slight complication since $\on{Ber}(p)$ is not centered, but this can be handled using standard techniques (see, e.g., \cite[Theorem~1.3]{JSS20smooth}) Therefore, it suffices to consider the case $t\le 2^{-2c_p n}$. 

\textbf{Step 2: }For $\delta, \rho \in (0,1)$, we define
\begin{align*}
\mc{E}_L(\delta, \rho) &= \{\exists y\in\on{Cons}(\delta,\rho): \snorm{y(B)^{T}B}_2\le 2^{-c_p n}\},\\
\mc{E}_R(\delta, \rho) &= \{\exists x\in\on{Cons}(\delta,\rho): \snorm{Bx(B)}_2\le 2^{-c_p n}\}.
\end{align*}
Applying \cref{prop:compressible} with $c_p > 0$, we find that there exist $\delta,\rho,\epsilon' > 0$ such that for all sufficiently large $n$,
\[\mb{P}[s_n(B)\le t/\sqrt{n}]\le 2n(1-p)^n+2(1-p-\epsilon')^n + \mb{P}[s_n(B)\le t/\sqrt{n}\wedge\mc{E}_L(\delta, \rho)^c\wedge\mc{E}_R(\delta, \rho)^c].\]
Here, we have used that the distribution of $B$ is invariant under transposition. 

\textbf{Step 3: } Let $\gamma = \gamma_{\ref{prop:structure}}(\delta, \rho, p, \epsilon)$. Let $W_\gamma \subseteq \{0,1\}^{n}$ denote the set of vectors $x \in \{0,1\}^{n}$ such that $\sum_{i=1}^{n} x_i \in [pn - \gamma n, pn+\gamma n]$.  As in the proof of \cref{prop:structure}, let $Q \ge 1$ be a constant such that the event 
\[\mc{W}_Q: = \{ |\{i \in [n]: B_i \notin W_\gamma| \le Q \} \]
holds with probability at least $1- 2^{-1729n}$. Then, it suffices to bound $\mb{P}[s_n(B)\le t/\sqrt{n} \wedge \mc{E}_L^{c} \wedge \mc{E}_R^{c} \wedge \mc{W}_Q]$, where for simplicity, we have omitted the parameters $\delta, \rho$ fixed in the previous step.

Let $B_1,\ldots,B_n$ denote the rows of $B$, and for simplicity, let $y = y(B)$. On the event that $s_n(B)\le t/\sqrt{n}$, we have 
\[\snorm{y_1B_1+\cdots+y_nB_n}_2\le t/\sqrt{n}.\]
Moreover, on the event $\mc{E}_L^c$, using \cref{lem:nonconstant-admissible}, there is a set $I \subseteq [n]$ such that $|I|\ge \nu n$ and such that for all $i\in I$, $|y_i| \ge \kappa/\sqrt{n}$, for some $\kappa:=\kappa(\delta, \rho) > 0$. In particular, since for any $i\in [n]$,
$\snorm{y_1 B_1 + \dots + y_n B_n}_{2} \ge |y_i|\on{dist}(B_i, H_i),$
where $H_i$ denotes the span of rows $B_1,\dots,B_{i-1},B_{i+1},\dots B_n$, it follows that 
\[\on{dist}(B_i, H_i) \le \frac{t}{\kappa}\text{ for all }i \in I.\]
Also, on the event $\mc{W}_Q$, there are at least $\nu n/2$ indices $i \in I$ such that $B_i \in W_\gamma$. Thus, we see that
\[\mb{P}[s_n(B) \le t/\sqrt{n} \wedge \mc{E}_L^{c} \wedge \mc{E}_R^{c} \wedge \mc{W}_Q] \le \frac{2}{\nu n}\sum_{i=1}^{n}\mb{P}[\on{dist}(B_i, H_i) \le t/\kappa \wedge \mc{E}_L^{c} \wedge \mc{E}_R^{c} \wedge B_i \in W_\gamma].\]

\textbf{Step 4: }By symmetry, it suffices to bound $\mb{P}[\mc{B}_1]$, where 
\[\mc{B}_1 := \on{dist}(B_1, H_1) \le t/\kappa \wedge \mc{E}_R^{c} \wedge B_1 \in W_\gamma.\]
Let $v(H_1)$ be a unit vector normal to $H_1$. Then, by \cref{prop:structure}, except with probability $4^{-n}$ (over the randomness of $H_1$), exactly one of the following holds. 
\begin{itemize}
    \item $v(H_1) \in \on{Cons}(\delta, \rho)$, or
    \item $\mc{T}_{p, \gamma}(v(H_1), L) \le \binom{n}{pn}^{-1}\exp(\epsilon n),$
\end{itemize}
where $L := L_{\ref{prop:structure}}(\delta, \rho, p, \epsilon)$. If the first possibility occurs, then $\mc{B}_1$ cannot hold, since then, $v(H_1) \in \on{Cons}(\delta, \rho)$ satisfies
\[\snorm{Bv(H_1)}_{2} = |\langle B_1, v(H_1)\rangle | \le \on{dist}(B_1, H_1) \le t/\kappa \le 2^{-2c_p n}/\kappa,\]
which contradicts $\mc{E}_R^{c}$ for all $n$ sufficiently large. Hence, the second possibility must hold. But then, using $\on{dist}(B_1, H_1) \ge |\langle B_1, v(H_1) \rangle|$, we have that (over the randomness of $B_1$),
\begin{align*}
    \mb{P}[\on{dist}(B_1, H_1) \le t/\kappa \wedge B_1 \in W_\gamma] 
    &\le \mb{P}[|\langle B_1, v(H_1)\rangle| \le t/\kappa \mid B_1 \in W_\gamma]\\
    &\le \frac{Lt}{\kappa} + \binom{n}{pn}^{-1}\exp(\epsilon n)\\
    &\le \frac{Lt}{\kappa} + (1-p-\epsilon)^{n}.
\end{align*}
This completes the proof. 
\end{proof}

\section{Singularity of random combinatorial matrices}\label{sec:additional-results}
In this section, we discuss the proof of \cref{thm:row-regular}. Given \cref{prop:structure}, by a similar argument as in the previous section, we see that the only additional ingredient required is the following estimate for invertibility on almost constant vectors. 

\begin{proposition}
\label{prop:qn-compressible}
For any $\epsilon > 0$, there exist $\delta, \rho, c, n_0$ depending on $\epsilon$ such that for all $n\ge n_0$,
\[\mb{P}\bigg[\inf_{x \in \on{Cons}(\delta, \rho)}\snorm{Q_n x}_{2} \le c\sqrt{n} \vee \inf_{y \in \on{Cons}(\delta, \rho)}\snorm{y Q_n}_{2} \le c\sqrt{n}\bigg] \le \bigg(\frac{1}{2} + \epsilon\bigg)^{n}.\]
\end{proposition}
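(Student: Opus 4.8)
The plan is to follow the template of \cref{prop:compressible}, but with two differences forced by the structure of $Q_n$: the entries within a row are no longer independent (each row is uniform on a slice), and the inner product of every row with $1_n$ is deterministically $\lfloor n/2\rfloor$. The deterministic constraint means we cannot hope to win any cancellation in the all-ones direction, so the first step is to split $\on{Cons}(\delta,\rho)$ according to how much mass a vector carries along $1_n$. Precisely, given $x\in\on{Cons}(\delta,\rho)$, write $x=\alpha\cdot 1_n/\sqrt n + x^{\perp}$ with $x^\perp\perp 1_n$; call $x$ \emph{flat} if $|\alpha|\ge\beta$ and \emph{non-flat} otherwise, for a small constant $\beta=\beta(\epsilon)$ to be fixed. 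For non-flat vectors, $x^\perp$ has Euclidean norm bounded below by a constant, and moreover $x$ is still almost-constant, so the bulk of its coordinates cluster near $\lambda/\sqrt n$; since $\langle B_i, 1_n\rangle$ is fixed, the randomness of $\langle B_i,x\rangle$ comes entirely from $\langle B_i, x^\perp\rangle$, and the Kolmogorov--Lévy--Rogozin bound (\cref{lem:LKR}), applied in the rerandomized form used in the proof of \cref{lem:slkr} (pair up coordinates of the slice so that the difference variables are i.i.d.~$\on{Ber}(1/2)$), gives $\mc L(\langle B_i,x\rangle, c)\le 1/2 - c$ uniformly over non-flat $x$, for a suitable constant $c$; a standard net argument over the low-entropy set $\on{Cons}(\delta,\rho)$ together with the operator-norm bound $\snorm{Q_n}\le n$ then yields a bound of $(1/2+\epsilon/2)^n$ on the non-flat contribution, exactly as in \cref{prop:non-coord}.

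The flat case is where the deterministic constraint actually helps. If $|\alpha|\ge\beta$ and $\snorm{Q_nx}_2\le c\sqrt n$, then for each row $i$ we have $|\langle B_i,x\rangle|\le n\cdot$(small), i.e.\ $|\alpha\langle B_i,1_n\rangle/\sqrt n + \langle B_i,x^\perp\rangle|$ is tiny; since $\langle B_i,1_n\rangle=\lfloor n/2\rfloor$ is the same for every row, this says $\langle B_i, x^\perp\rangle$ is pinned near the fixed value $-\alpha\lfloor n/2\rfloor/\sqrt n$ for \emph{all} $i$ simultaneously. Writing $x^\perp$ in terms of the almost-constant structure of $x$, and using that for flat $x$ the ``non-bulk'' coordinates still carry a definite amount of mass (or else $x$ would be within $\delta'$ of $\pm 1_n/\sqrt n$, which is excluded once $\rho$ is small since $\pm1_n/\sqrt n\notin\on{Cons}(\delta,\rho)$ for the relevant range... actually here one must be slightly careful: $1_n/\sqrt n$ \emph{is} in $\on{Cons}(\delta,\rho)$, so one argues instead that the bulk value $\lambda$ can be absorbed into $\alpha$ and the remaining $\delta n$ coordinates must have nontrivial spread), one concludes that each row $B_i$ must land in a universal set $\mc C\subseteq\{0,1\}^n$ whose measure under the uniform-slice distribution is at most $(1/2+\epsilon/4)^n$ — this is the slice analogue of the set $\mc C$ in the proof of \cref{prop:coord-i}. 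Since the rows are independent, $\mb P[B_i\in\mc C\ \forall i]\le \big((1/2+\epsilon/4)^n\big)^n$, which is vastly smaller than $(1/2+\epsilon)^n$, and a union bound over the exponentially many (but at most $C^n$) possible ``shapes'' of the flat almost-constant vector finishes this case.

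Finally, the left-singular statement $\inf_y\snorm{yQ_n}_2\le c\sqrt n$ is handled the same way after noting that $Q_n^T$ has independent \emph{columns} uniform on a slice; the argument for non-flat $y$ is symmetric, while for flat $y$ one works column-by-column, with the role of the deterministic row sums replaced by the observation that $y$ having large mass along $1_n$ forces $\langle 1_n, Q_n^T y\rangle = \lfloor n/2\rfloor\langle 1_n,y\rangle$ to be large, which combined with $\snorm{yQ_n}_2\le c\sqrt n$ is contradictory unless $y$ is itself almost orthogonal to $1_n$ — reducing the flat left case to the non-flat analysis. The main obstacle I anticipate is the flat case: one must carefully isolate exactly which almost-constant vectors are dangerous, show that ``dangerous'' forces \emph{every} row into a small set (rather than just one row, as in the Bernoulli elementary-vector analysis), and verify the measure bound $(1/2+o(1))^n$ for that set under the slice distribution; here one uses that a fixed nonzero linear functional with $\Omega(n)$ coordinates bounded away from its ``bulk'' value, evaluated on a uniform slice point, is anticoncentrated at scale $1$ with Lévy concentration $1/2+o(1)$, which again goes through the pairing/rerandomization trick behind \cref{lem:slkr}.
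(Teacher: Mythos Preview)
Your flat/non-flat split for the right-singular case matches the paper's, but you are making the flat case far harder than it is and missing the key external input. The paper observes that $Q_n 1_n = \lfloor n/2\rfloor 1_n$ and then invokes \cite[Proposition~2.8]{Tra20}: with probability at least $1-4^{-n}$, all singular values of $Q_n$ except the top one are at most $K\sqrt n$, so the operator norm of $Q_n$ restricted to $1_n^\perp$ is at most $K\sqrt n$. On this event, any flat $x$ (say $|\alpha|\ge 1/2$) satisfies $\snorm{Q_n x}_2 \ge |\alpha|\lfloor n/2\rfloor - K\sqrt n \ge n/4 - K\sqrt n$ \emph{deterministically}, and there is nothing further to prove. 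Your row-by-row argument is both unnecessary and misquantified: the set $\mc{C}_x = \{B\in\{0,1\}^n_{n/2}: |\sang{B,x^\perp} + \alpha\lfloor n/2\rfloor/\sqrt n| \le c\}$ has slice-measure at most $1/2+\epsilon$ (this is precisely the non-flat anticoncentration applied to $x^\perp/\snorm{x^\perp}_2$), not $(1/2+\epsilon)^n$; a corrected version could be salvaged but would then rely on the non-flat lemma anyway.

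The same operator-norm input from \cite{Tra20} is what makes the net argument go through, and your crude bound $\snorm{Q_n}\le n$ does not. With $\snorm{Q_n}=\Theta(n)$, comparing $\snorm{Q_n x}_2$ at nearby points forces a net at scale $O(n^{-1/2})$, and $\on{Cons}(\delta,\rho)$ has superexponential entropy at that scale (each of the $\delta n$ non-bulk coordinates requires a $\Theta(\sqrt n)$-point net), so the union bound blows up; the paper explicitly notes that one needs either the $K\sqrt n$ bound on $1_n^\perp$ or Livshyts's randomized-rounding net \cite[Theorem~4]{Liv18}. For the non-flat anticoncentration itself, the paper's \cref{lem:slice-levy} is proved in two cases (Bolthausen's combinatorial CLT when all $|x_i|$ are small; a conditioning argument isolating $x_1$ when some $|x_i|$ is large), and your rerandomization sketch does not directly yield a bound at a \emph{constant} scale $\theta$ when the bulk coordinates are all within $\rho/\sqrt n$ of each other. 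Finally, the left-singular case is not symmetric to the right: $Q_n^T$ does not have independent slice-distributed rows. The paper's \cref{lem:qn-left-compressible} instead proves a pointwise bound valid for \emph{every} $y\in\mb{S}^{n-1}$, by conditioning on all but the first $n/4$ columns (so each remaining entry is conditionally $\on{Ber}(p)$ with $p\in[1/3,2/3]$) in the spread case, and by a direct counting argument on a single row when one $|y_i|$ is large.
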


We begin with the easier case of $\snorm{yQ_n}_{2}$. 
\begin{lemma}
\label{lem:qn-left-compressible}
For any $\epsilon > 0$, there exists $c, n_0$ depending on $\epsilon$ such that for all $n \ge n_0$ and for any $y \in \mb{S}^{n-1}$,
\[\mb{P}[\snorm{yQ_n}_{2} \le c\sqrt{n}] \le (1/2 + \epsilon)^{n}.\]
\end{lemma}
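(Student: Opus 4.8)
The plan is to split according to the size of $\snorm{y}_\infty$, with a threshold $\tau>0$ that will be a small universal constant (determined by the constant $C_{\ref{lem:LKR}}$ in \cref{lem:LKR}). Write $m:=\lfloor n/2\rfloor$, let $R_1,\dots,R_n$ be the (independent) rows of $Q_n$, each the indicator of a uniform $m$-element subset of $[n]$, and let $C_1,\dots,C_n$ be the columns, so $\snorm{yQ_n}_2^2=\sum_{j=1}^n\langle y,C_j\rangle^2$; note that each $C_j=((R_1)_j,\dots,(R_n)_j)$ has independent $\on{Ber}(m/n)$ coordinates, though the columns are not mutually independent. Since the rows of $Q_n$ are exchangeable we may, after permuting them, assume $|y_1|=\snorm{y}_\infty$.

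\textbf{Case A: $\snorm{y}_\infty\ge\tau$.} Condition on $R_2,\dots,R_n$ and set $v:=\sum_{i\ge 2}y_iR_i$, which is then deterministic. On $\{\snorm{yQ_n}_2\le c\sqrt n\}$ we have $\snorm{y_1R_1+v}_2\le c\sqrt n$, so $R_1$ lies in the Euclidean ball of radius $c\sqrt n/|y_1|\le c\sqrt n/\tau$ about $-v/y_1$; rounding the centre to its nearest point $x_0\in\{0,1\}^n$ gives $\snorm{R_1-x_0}_1=\snorm{R_1-x_0}_2^2\le 4c^2n/\tau^2$, so $R_1$ is confined to at most $(n+1)\binom{n}{\lfloor 4c^2n/\tau^2\rfloor}$ values. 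As $R_1$ is uniform on $\{0,1\}^n_m$ and $\binom nm\ge 2^n/(n+1)$, we get, uniformly in the conditioning,
\[\mb{P}\big[\snorm{yQ_n}_2\le c\sqrt n\big]\le (n+1)^2\,2^{-n}\binom{n}{\lfloor 4c^2n/\tau^2\rfloor}\le (n+1)^2\,2^{-n}\big(1+o_c(1)\big)^n,\]
using $\binom nk\le(en/k)^k$ and $(e/u)^u\to 1$ as $u\to 0^+$. Choosing $c$ small in terms of $\tau$ and $\epsilon$, and then $n$ large, bounds this by $(1/2+\epsilon)^n$.

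\textbf{Case B: $\snorm{y}_\infty<\tau$.} Here I aim for the stronger bound $\exp(-\Omega(n))$. Reveal the columns one at a time, writing $\mc F_{k-1}:=\sigma(C_1,\dots,C_{k-1})$ and $s_i^{(k-1)}:=\sum_{j<k}(R_i)_j=\sum_{j<k}(C_j)_i$; since the rows are independent and each remains uniform on its admissible completions, conditionally on $\mc F_{k-1}$ the coordinates of $C_k$ are independent with $(C_k)_i\sim\on{Ber}\big((m-s_i^{(k-1)})/(n-k+1)\big)$. Let $G:=\{k:n/4\le k\le 3n/4\}$ and let $\mc G_k$ be the ($\mc F_{k-1}$-measurable) event that $|s_i^{(k-1)}-(k-1)m/n|\le(k-1)/12$ for all $i$; by standard concentration for the hypergeometric distribution and a union bound over $i$ and $k\in G$, $\mb{P}\big[\bigcup_{k\in G}\mc G_k^c\big]\le\exp(-\Omega(n))$, and on $\mc G_k$ one has $(m-s_i^{(k-1)})/(n-k+1)\in[1/5,4/5]$ for every $i$ (for $n$ large). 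Applying \cref{lem:LKR} conditionally to $\langle y,C_k\rangle=\sum_{i\in\on{supp}(y)}y_i(C_k)_i$ with $r_i:=|y_i|/3$ and $r:=\tau/3$ — legitimate since $r\ge\snorm{y}_\infty/3=\max_ir_i$ by the case hypothesis, and since $\mc L(y_i(C_k)_i,|y_i|/3)=\max\{\theta_i,1-\theta_i\}\le 4/5$ on $\mc G_k$ — gives, on $\mc G_k$,
\[\mc L\big(\langle y,C_k\rangle\mid\mc F_{k-1},\ \tau/3\big)\le\frac{C_{\ref{lem:LKR}}\cdot\tau/3}{\sqrt{\tfrac15\sum_i(|y_i|/3)^2}}=\sqrt 5\,C_{\ref{lem:LKR}}\tau\le\tfrac14,\]
provided $\tau$ is a small enough universal constant; hence $\mb{P}[|\langle y,C_k\rangle|>\tau/3\mid\mc F_{k-1}]\ge 3/4$ on $\mc G_k$. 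Setting $Z_k:=\one[|\langle y,C_k\rangle|>\tau/3]$ and $U_k:=2^{-Z_k\one[k\in G]}\one[\mc G_k\text{ or }k\notin G]$, we have $\mb{E}[U_k\mid\mc F_{k-1}]\le 3/4$ for $k\in G$ and $\le 1$ otherwise, so iterated conditioning gives $\mb{E}\big[2^{-\sum_{k\in G}Z_k}\prod_{k\in G}\one_{\mc G_k}\big]=\mb{E}\big[\prod_{k=1}^nU_k\big]\le(3/4)^{|G|}$; Markov's inequality then yields $\mb{P}\big[\sum_{k\in G}Z_k<|G|/4,\ \bigcap_{k\in G}\mc G_k\big]\le 2^{|G|/4}(3/4)^{|G|}\le(0.9)^{|G|}$. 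Combining with $\mb{P}[\bigcup_{k\in G}\mc G_k^c]\le\exp(-\Omega(n))$ and $|G|\ge n/2$, we get $\mb{P}[\sum_{k\in G}Z_k<|G|/4]\le\exp(-\Omega(n))$. But $\snorm{yQ_n}_2\le c\sqrt n$ forces $\sum_{k\in G}Z_k\le 9c^2n/\tau^2$, which is $<|G|/4$ once $c$ is a small enough universal constant; hence in Case B, $\mb{P}[\snorm{yQ_n}_2\le c\sqrt n]\le\exp(-\Omega(n))\le(1/2+\epsilon)^n$ for $n$ large. Taking $c$ to be the smaller of the thresholds arising in the two cases (so $c$ depends only on $\epsilon$) completes the proof.

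I expect Case B to require the most care: the columns of $Q_n$ are dependent, so one cannot directly tensorize a single-column anticoncentration estimate. The fix is to reveal the columns sequentially and control the conditional law of each new column on the high-probability event that all partial row sums are balanced, after which \cref{lem:LKR} and the Markov/supermartingale bound can be run conditionally; Case A, by contrast, is a routine lattice-point count for a Euclidean ball intersected with the slice.
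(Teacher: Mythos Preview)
Your proof is correct and follows the same two-case split on $\snorm{y}_\infty$ as the paper: Case A is the paper's Case II, and Case B is the same conditional-LKR-plus-sequential-column-revelation idea as the paper's Case I, with your supermartingale/Markov step making explicit what the paper abbreviates as ``a slight conditional generalization of the tensorization lemma''. One simplification you missed: the paper reveals only the \emph{first} $n/4$ columns, where the conditional Bernoulli parameters lie deterministically in $[1/3,2/3]$ (since at most $n/4-1$ entries of any row have been exposed), so no hypergeometric concentration event $\mc{G}_k$ is needed.
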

\begin{proof}
Without loss of generality, we may assume that $|y_1| \ge \dots \ge |y_n|$. We divide the proof into two cases depending on $|y_1|$. Let $\delta > 0$ be a constant to be chosen at the end of the proof. 

\textbf{Case I: }$|y_1| < \delta$. Note that any entry in the first $n/4$ columns of $Q_n$, conditioned on all the remaining entries in the first $n/4$ columns of $Q_n$, is distributed as $\on{Ber}(p)$ for some $p \in [1/3, 2/3]$. Moreover, by \cref{lem:LKR}, it follows that for independent random variables $\xi_1,\dots,\xi_n$, where $\xi_i \sim \on{Ber}(p_i)$ for some $p_i \in [1/3, 2/3]$,
\[\mc{L}(y_1 \xi_1 + \dots +y_n \xi_n, \delta) \le 3C_{\ref{lem:LKR}}\delta.\]
Therefore, a slight conditional generalization of the second part of the tensorization lemma \cite[Lemma~3.2]{Tik20} (which has the same proof) shows that
\[\mb{P}[\snorm{y Q_n}_{2} \le \delta \sqrt{n/8}] \le (20C_{\ref{lem:LKR}}\delta)^{n/8} \le (1/4)^{n},\]
provided that $\delta$ is chosen sufficiently small depending on $C_{\ref{lem:LKR}}$. 

\textbf{Case II: }$|y_1| \ge \delta$. Let $R_1,\dots,R_n$ denote the rows of $Q_n$. Then,
\begin{align*}
    \mb{P}[\snorm{yQ_n}_{2} \le \delta c\sqrt{n}] 
    &\le \sup_{R_2,\dots,R_n}\mb{P}[\snorm{y_1 R_1 + y_2 R_2 + \dots + y_n R_n}_{2} \le \delta c\sqrt{n} | R_2,\dots, R_n]\\
    &\le \sup_{v \in \mb{R}^{n}}\mb{P}[\snorm{R_1 - v}_{2} \le c\sqrt{n}] \le \binom{n}{n/2}^{-1}\binom{n}{2c^{2}n} \le \bigg(\frac{1}{2} + \epsilon\bigg)^{n}, 
\end{align*}
provided that $c > 0$ is chosen to be sufficiently small depending on $\epsilon > 0$. This completes the proof.  
\end{proof}

Next, we deal with the harder case of $\snorm{Q_nx}_{2}$. We will need the following analogue of \cite[Lemma~3.5]{Tik20}
\begin{lemma}\label{lem:slice-levy}
For any $\epsilon \in (0,1/8)$, there exist $\theta = \theta(\epsilon) > 0$ and $n_0$ depending on $\epsilon$ for which the following holds. For all $n \ge n_0$ and for all $x\in \mb{S}^{n-1}$ such that $|\sang{x, 1_n/\sqrt{n}}|\le 1/2$, we have 
\[\mc{L}(q\cdot x, \theta)\le 1/2+\epsilon,\]
where $q$ is distributed uniformly on $\{0,1\}^{n}_{n/2}$. 
\end{lemma}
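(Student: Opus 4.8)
The plan is to reduce to a normalized vector and then treat two regimes. Since $\sum_i q_i = \lfloor n/2\rfloor$ is deterministic, replacing $x$ by its component $x^{\perp}$ orthogonal to $1_n$ changes $q\cdot x$ only by an additive constant, and $\snorm{x^{\perp}}_2^2 = 1-\sang{x,1_n/\sqrt n}^2\ge 3/4$; so, after rescaling $\theta$ by a factor in $[1,2/\sqrt 3]$, it suffices to bound $\mc{L}(q\cdot x,\theta)$ when $\sang{x,1_n}=0$ and $\snorm{x}_2 = 1$. We may also permute coordinates freely (legitimate since $1_n$ is symmetric) and assume $|x_1|\ge\cdots\ge|x_n|$.

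\emph{Unstructured regime.} Fix small $\delta=\delta(\epsilon),\rho=\rho(\epsilon)\in(0,1/4)$ and suppose $x\notin\on{Cons}(\delta,\rho)$. By \cref{lem:nonconstant-admissible} there are, after a permutation, two disjoint sets of $\ge\nu n$ coordinates whose entries lie in bands separated by $\ge\nu'/\sqrt n$ (take the majority sign inside the nonzero band in conclusion~(1), or use conclusion~(2) directly). Then \cref{lem:slkr}, applied with $\sigma=\nu/2$, $\lambda=1/4$, $r=\nu'/\sqrt n$ (the central slice satisfies $\lfloor n/2\rfloor\in[\lambda n,(1-\lambda)n]$), gives $\mc{L}(q\cdot x,\nu'/\sqrt n)\le C_{\ref{lem:slkr}}(\nu)/\sqrt n$, and covering an interval of radius $\theta$ by $O(\theta\sqrt n/\nu')$ intervals of radius $\nu'/(2\sqrt n)$ upgrades this to $\mc{L}(q\cdot x,\theta)\le O_{\nu,\nu'}(\theta)+O(1/\sqrt n)\le\epsilon$ once $\theta=\theta(\epsilon)$ is small and $n$ is large. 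So we may assume $x\in\on{Cons}(\delta,\rho)$.

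\emph{Structured regime.} If $x\in\on{Cons}(\delta,\rho)$, then combined with $\sang{x,1_n}=0$ (balancing the zero sum against the $\le\delta n$ outliers, whose sum is $O(\sqrt{\delta n})$ by Cauchy--Schwarz) the defining constant is $O((\rho+\sqrt\delta)/\sqrt n)$; hence $x$ has $(1-\delta)n$ coordinates of magnitude $O((\rho+\sqrt\delta)/\sqrt n)$, its $\le\delta n$ outlier coordinates carry $\ell^2$-mass $\ge 1/2$, and, since $\snorm{x}_2=1$, at most $t^{-2}$ coordinates exceed $t$ in magnitude for each fixed $t>0$. Let $j^{*}$ index the largest coordinate and condition on $q_{j^{*}}$ (with $\mb{P}[q_{j^{*}}=0],\mb{P}[q_{j^{*}}=1]=\tfrac12+O(1/n)$), so $q\cdot x = x_{j^{*}}q_{j^{*}}+Z$ with $Z=\sum_{i\ne j^{*}}q_ix_i$. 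If $|x_{j^{*}}|\ge c_0$ and $\snorm{(x_i)_{i\ne j^{*}}}_2<\delta_3$ (with $c_0\gg\delta_3=\delta_3(\epsilon)$), then $|x_{j^{*}}|\ge\sqrt{3/4-\delta_3^2}\ge\tfrac12$, $\on{Var}(Z\mid q_{j^{*}})=O(\delta_3^2)$, and Chebyshev places $q\cdot x$ within $\sqrt{\delta_3}$ of the two-valued variable $x_{j^{*}}q_{j^{*}}+\mb{E}[Z\mid q_{j^{*}}]$ — whose two values differ by $|x_{j^{*}}|(1+o(1))\ge\tfrac12$ — except with probability $O(\delta_3)$; hence $\mc{L}(q\cdot x,\theta)\le\tfrac12+O(\delta_3+1/n)\le\tfrac12+\epsilon$ provided $\theta+\sqrt{\delta_3}<\tfrac14$. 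In the remaining cases ($|x_{j^{*}}|<c_0$, or the vector obtained by deleting coordinate $j^{*}$ still has norm $\ge\delta_3$) one shows $\mc{L}(q\cdot x,\theta)\le\epsilon$: iterating the heavy-coordinate peeling a bounded number of times either exposes two populous ($\Theta(n)$) value-bands of $x$ separated at scale $\Theta(1/\sqrt n)$, so the unstructured-regime argument applies to $x$ itself, or isolates $O_{\epsilon}(1)$ heavy coordinates carrying all but a $\delta_3$-fraction of the mass, and then conditioning on those $O_{\epsilon}(1)$ coordinates (jointly $O(1/n)$-close to uniform on a cube) leaves a remainder of variance $O(\delta_3^2)$, absorbed by Chebyshev as above.

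The main obstacle is the structured regime, and inside it the near-standard-basis vectors $x\approx\pm e_{j^{*}}$ (up to a vanishing mean-zero perturbation): there the bound $\tfrac12+\epsilon$ is essentially sharp — it is exactly $\tfrac12$ when $x=e_{j^{*}}$ — and approaches based on rerandomizing within coordinate pairs (as in \cref{lem:LKR}) or on a crude conditioning only produce the trivial bound $1$, because whenever the unique heavy coordinate fails to be ``split'' the conditional value of $q\cdot x$ is constant. The resolution, as above, is to condition precisely on the dominant coordinate and to control the fluctuation of the remainder through its (necessarily small) second moment.
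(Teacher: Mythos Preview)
Your structured-regime argument contains a genuine error: the assertion that $\mc{L}(q\cdot x,\theta)\le\epsilon$ in the ``remaining cases'' is false. Consider $x=(e_1-e_2)/\sqrt 2$, which has $\sang{x,1_n}=0$, lies in $\on{Cons}(\delta,\rho)$ for every $\delta,\rho>0$, and has $|x_{j^*}|=\snorm{(x_i)_{i\ne j^*}}_2=1/\sqrt 2$, so it falls into your remaining cases (the deleted vector still has norm $\ge\delta_3$). Here $q\cdot x=(q_1-q_2)/\sqrt 2$ takes the value $0$ with probability $\tfrac12+O(1/n)$, so $\mc{L}(q\cdot x,\theta)\ge\tfrac12-O(1/n)$ for every $\theta>0$. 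More generally, once you isolate $k\ge 2$ heavy coordinates and condition on $(q_{j_1},\dots,q_{j_k})$ (near-uniform on $\{0,1\}^k$), the heavy sum $\sum_{i\le k}q_{j_i}x_{j_i}$ can have an atom of mass as large as $\binom{k}{\lfloor k/2\rfloor}2^{-k}$ (equal to $\tfrac12$ when $k=2$ and $x_{j_1}=-x_{j_2}$); Chebyshev on the light remainder then only delivers $\mc{L}\le\tfrac12+O(\delta_3)+O(k/n)$, not $\le\epsilon$. Your dichotomy is also unsubstantiated: since $x\in\on{Cons}(\delta,\rho)$ forces all but $\le\delta n$ coordinates into a single band near $0$, peeling cannot ``expose two populous $\Theta(n)$-bands'', and the number of outliers need not be $O_\epsilon(1)$ (e.g.\ one coordinate of size $\Theta(1)$ together with $n^{1/2}$ coordinates of size $\Theta(n^{-1/4})$).

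The paper's proof avoids this entirely. It does not split along $\on{Cons}$ but simply into $|x_1|<\delta$ versus $|x_1|\ge\delta$ (after ordering $|x_1|\ge\cdots\ge|x_n|$). In the first case it invokes Bolthausen's combinatorial CLT: the hypothesis $|\sang{x,1_n/\sqrt n}|\le\tfrac12$ gives $\on{Var}(q\cdot x)\ge 3/16$, and the Berry--Esseen error is $O(|x_1|)=O(\delta)$, yielding $\mc{L}\le\tfrac14$. In the second case it conditions not on the heavy coordinates but on the \emph{entire middle block} $q_2,\dots,q_{n-\epsilon^2 n}$: then $q_1$ is conditionally $\on{Ber}(\tfrac12+O(\epsilon^2))$, the residual tail $\sum_{i>n-\epsilon^2 n}q_ix_i$ has variance at most $\sum_{i>n-\epsilon^2 n}x_i^2\le\epsilon^2$ purely from the ordering, and the two values of $q_1x_1$ are $|x_1|\ge\delta$ apart. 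This yields $\mc{L}\le\tfrac12+O(\epsilon^2/\delta^2)$ with no case analysis on how many large coordinates there are --- which is exactly the difficulty your peeling approach runs into.
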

\begin{proof}
Without loss of generality, we may assume that $|x_1| \ge \dots \ge |x_n|$. Again, we divide the proof into two cases depending on $|x_1|$. Let $\delta > 0$ be a constant to be chosen at the end of the proof. 

\textbf{Case I: }$|x_1| < \delta$. Let $\mu:= \mb{E}[q\cdot x]$ and $\sigma^{2}:= \on{Var}(q\cdot x)$. Since $\sang{x, 1_n} \le \sqrt{n}/2$, a direct computation shows that $\sigma^{2} \ge 3/16$. Moreover, a quantitative combinatorial central limit theorem due to Bolthausen \cite{Bol84} shows that the $L^\infty$ distance between the cumulative distribution function of $(q\cdot x - \mu)/\sigma$ and that of the standard Gaussian is at most $C\delta$, where $C$ is an absolute constant. Hence, for all $\delta$ sufficiently small, we have $\mc{L}(q\cdot x, \delta) \le 1/4$ whenever $|x_1| < \delta$.  

\textbf{Case II: }$|x_1| \ge \delta$. Let $\mc{G}$ denote the event (depending on $q$) that
\[(n-\epsilon^{2}n - 1)^{-1}\sum_{i=2}^{n-\epsilon^{2}n}q_i \in [1/2-\epsilon^{4}/2, 1/2 + \epsilon^{4}/2].\]
Then for all sufficiently large $n$, we have
\begin{align*}
    \sup_{r \in \mb{R}}\mb{P}[|q\cdot x - r| \le \theta]
    &\le \sup_{r\in \mb{R}}\mb{P}[|q\cdot x - r| \le \theta \wedge \mc{G}] + \mb{P}[\mc{G}^{c}]\\
    &\le \sup_{r \in \mb{R}}\mb{P}[|q\cdot x - r| \le \theta \wedge \mc{G}] + 2\exp(-\epsilon^{8}n/128),
\end{align*}
where the final inequality is by a standard large deviation estimate. It remains to control $\mb{P}[|q\cdot x - r|\le \theta \wedge \mc{G}]$. For this, fix any realization  $q':= (q_2,\dots, q_{n-\epsilon^{2}n})$ satisfying $\mc{G}$. Note that 
\[1/2 - 2\epsilon^{2} \le \inf_{q'\in \mc{G}}\mb{P}[q_1 = 0 \mid q']\le \sup_{q' \in \mc{G}}\mb{P}[q_1 = 0 \mid q']\le 1/2 + 2\epsilon^{2}.\]
Note also that, since $\sum_{i\ge n-\epsilon^{2}n}x_i^{2} \le \epsilon^{2}$ (this uses $\snorm{x}_{2} = 1$ and $|x_1| \ge |x_2| \ge \dots \ge |x_n|$), it follows that 
\[\sup_{q' \in \mc{G}, q_1}\on{Var}\bigg[\sum_{i \ge n-\epsilon^{2}n} q_i x_i \bigg| q_1,q'\bigg] \le \epsilon^{2},\]
so that by Markov's inequality, 
\[\sup_{q' \in \mc{G}, q_1}\mb{P}\bigg[\bigg|\sum_{i \ge n-\epsilon^{2}n} q_i x_i - f(q',q_1)\bigg| \ge \frac{\delta}{8}\bigg| q',q_1\bigg] \le \frac{32\epsilon^{2}}{\delta^{2}},\]
where $f(q',q_1)$ denotes the mean of $\sum_{i \ge n-\epsilon^{2}n} q_i x_i$ conditioned on $q',q_1$.
Finally, since $|x_1| \ge \delta$, and since 
\[\sup_{q' \in \mc{G}}|f(q', 0) - f(q', 1)| \le |x_{n-\epsilon^{2}n}|\le 2/\sqrt{n},\] \
it follows by putting everything together that
\begin{align*}
    \sup_{r\in \mb{R}}\mb{P}[|q\cdot x - r| \le \theta \wedge \mc{G}]
    &\le \sup_{r\in \mb{R}}\sup_{q' \in \mc{G}}\mb{P}[|q\cdot x - r| \le \theta \mid q'] \le 1/2 + 2\epsilon^{2} + 64\epsilon^{2}/\delta^{2},
\end{align*}
provided that $\theta$ is chosen sufficiently small compared to $\delta$, and $n$ is sufficiently large. Indeed, the two values of $q_1 x_1$ (for $q_1 = 1$ and $q_1 = 0$) differ by $|x_1|$, which is at least $\delta$ by assumption, and the above discussion shows that given $q'$ and $q_1$, $\sum_{i\ge n- \epsilon^2n}q_i x_i$ is localized in an interval of length $\delta/2 + 2/\sqrt{n}$ except with probability at most $32\epsilon^{2}/\delta^{2}$. Since $\delta$ is an absolute constant coming from \textbf{Case I}, this gives the desired conclusion for all sufficiently small $\epsilon$, which completes the proof.
\end{proof}

Given the previous two lemmas, the proof of \cref{prop:qn-compressible} is by now standard.
\begin{proof}[Proof of \cref{prop:qn-compressible}]
The estimate for $\inf_{y \in \on{Cons}(\delta, \rho)}\snorm{yQ_n}_{2} \le c\sqrt{n}$ (for a suitable choice of $\delta, \rho, c$) follows immediately by combining \cref{lem:qn-left-compressible} with the low metric entropy of $\on{Cons}(\delta, \rho)$. To exploit the latter, one could either use a  randomized rounding based net construction due to Livshyts \cite[Theorem~4]{Liv18}, which uses that $\snorm{Q_n}_{\on{HS}}^{2} \le n^{2}$, or one could use the fact that there exists a constant $K$ such that with probability at least $1 - 4^{-n}$, all singular values of $Q_n$ except for the top singular value are at most $K\sqrt{n}$ (see \cite[Proposition~2.8]{Tra20}).

For the estimate on $\inf_{x \in \on{Cons}(\delta, \rho)}\snorm{Q_n x}_{2} \le c\sqrt{n}$ (for suitable $\delta, \rho, c$), we begin by using the fact \cite[Proposition~2.8]{Tra20} noted above that there exists a constant $K > 0$ such that with probability at least $1-4^{-n}$, the operator norm of $Q_{n}$ restricted to the subspace perpendicular to $1_{n}$ is at most $K\sqrt{n}$. Let us denote this event by $\mc{E}_K$. Then, on $\mc{E}_K$, for any $x \in \mb{S}^{n-1}$ such that $\sang{x, 1_n/\sqrt{n}} \ge 1/2$, we have $\snorm{Q_{n} x}_{2} \ge n/4 - K\sqrt{n}$.
Hence, on the event $\mc{E}_K$, and for all $n$ sufficiently large, it suffices to consider the infimum over those vectors $x \in \on{Cons}(\delta, \rho)$ which also satisfy $\sang{x, 1_n/\sqrt{n}} < 1/2$. For this, we can use \cref{lem:slice-levy-kolmogorov-rogozin} followed by the tensorization lemma (cf.~\cite[Lemma~3.2]{Tik20}), and then exploit the low metric entropy of $\on{Cons}(\delta, \rho)$ as above. We leave the details to the interested reader. 
\end{proof}


\bibliographystyle{amsplain0.bst}
\bibliography{main.bib}

\end{document}